\documentclass[11pt,reqno]{amsart}
\usepackage{geometry}                
\geometry{a4paper}    
\usepackage{amssymb}
\usepackage{epsfig}
\usepackage{graphics,color}

\input{epsf.sty}
\newcommand\vpil[1]{\overset{\leftarrow}{#1}}
\newcommand\hpil[1]{\overset{\rightarrow}{#1}}

\def\notto{\mathrel{\not\to}}

\def\X{{X}}
\def\Y{{Y}}

\newcommand\fpn{\ensuremath{f_n(p)}}
\newcommand\gpn{\ensuremath{g_n(p)}}
\newcommand\gnp{\ensuremath{G(n,p)}}
\newcommand\gnm{\ensuremath{G(n,m)}}
\newcommand\ggnm{{\ensuremath{\vec G(n,m)}}}
\newcommand\ggnp{{\ensuremath{\vec G(n,p)}}}
\newcommand\ntoo{\ensuremath{{n\to\infty}}}
\newcommand\nn{\binom n2}
\newcommand\oi{[0,1]}
\newcommand\hoi{(0,1]}
\newcommand\pc{p_{\mathsf c}}
\newcommand\Cov{\operatorname{Cov}}
\renewcommand{\=}{:=}
\newcommand\lrpar[1]{\left(#1\right)}
\newcommand\expQ[1]{e^{#1}}
\newcommand\Var{\operatorname{Var}}

\newcommand\cO{\mathcal O}
\newcommand\cI{\mathcal I}
\newcommand\cE{\mathcal E}
\newcommand\cX{\mathcal X}
\newcommand\cY{\mathcal Y}
\newcommand\cS{\mathcal S}
\newcommand\set[1]{\ensuremath{\{#1\}}}
\newcommand\bigpar[1]{\bigl(#1\bigr)}
\newcommand\xpar[1]{(#1)}
\newcommand\Bigpar[1]{\Bigl(#1\Bigr)}
\newcommand\Bigparfrac[2]{\Bigpar{\frac{#1}{#2}}}

\newcommand\opnn{o\bigpar{(1-p/2)^{2n}}}

\newcommand\E{\operatorname{\mathbb E{}}}
\renewcommand\P{\operatorname{\mathbb P{}}}
\newcommand\expBig[1]{\exp\Bigl(#1\Bigr)}
\newcommand\expbig[1]{\exp\bigl(#1\bigr)}

\newcommand\explr[1]{\exp\left(#1\right)}

\newcommand\xfrac[2]{#1/#2}
\newcommand\punkt[1]{\if.#1\else.\spacefactor1000\fi{#1}}

\newcommand\ie{i.e\punkt}

\newcommand{\tend}{\longrightarrow}
\newcommand\dto{\overset{\mathrm{d}}{\tend}}
\newcommand\pto{\overset{\mathrm{p}}{\tend}}
\newcommand\parfrac[2]{\lrpar{\frac{#1}{#2}}}
\newcommand\Op{O_{\mathrm p}}
\newcommand\op{o_{\mathrm p}}

\newcommand{\refand}[2]{\ref{#1} and~\ref{#2}}

\newcommand{\maple}{\texttt{Maple}}

\newcommand\qq{^{1/2}}

\newcommand\qqw{^{-1/2}}

\newcommand\qw{^{-1}}
\newcommand\qww{^{-2}}

\newcommand\gl{\lambda}
\newcommand\Bin{\operatorname{Bin}}

\newcommand\hp{\hat p}
\newcommand\pp{\frac{p(1-p)}{(2-p)^2}}
\newcommand\hpp{\frac{\hp(1-\hp)}{(2-\hp)^2}}

\begingroup
  \count255=\time
  \divide\count255 by 60
  \count1=\count255
  \multiply\count255 by -60
  \advance\count255 by \time
  \ifnum \count255 < 10 \xdef\klockan{\the\count1.0\the\count255}
  \else\xdef\klockan{\the\count1.\the\count255}\fi
\endgroup

\newcommand{\refT}[1]{Theorem~\ref{#1}}

\newcommand{\refL}[1]{Lemma~\ref{#1}}
\newcommand{\refR}[1]{Remark~\ref{#1}}

\newenvironment{romenumerate}[1][0pt]{
\addtolength{\leftmargini}{#1}\begin{enumerate}
 }{\end{enumerate}}

\newtheorem{theorem}{Theorem}[section]
\newtheorem{lemma}[theorem]{Lemma}

\newtheorem{fact}[theorem]{Fact}

\newtheorem{remark}[theorem]{Remark}

\newtheorem{problem}[theorem]{Problem}

\newtheorem{conjecture}[theorem]{Conjecture}

\begin{document}

\title[Correlations for paths in random orientations]
{Correlations for paths in\\ random orientations of $G(n,p)$ and $G(n,m)$}

\author{Sven Erick Alm} 
\address{Department of Mathematics, Uppsala University, 
P.O.\ Box 480,  SE-751 06, Uppsala, Sweden.}
\email{sea@math.uu.se}

\author{Svante Janson} 
\address{Department of Mathematics, Uppsala University, 
P.O.\ Box 480,  SE-751 06, Uppsala, Sweden.}
\email{svante@math.uu.se}

\author{Svante Linusson} \thanks{Svante Linusson is a Royal Swedish Academy of Sciences Research Fellow supported by a grant from the Knut and Alice Wallenberg Foundation.}
\address{Department of Mathematics, KTH-Royal Institute of Technology, 
  SE-100 44, Stockholm, Sweden.}
\email{linusson@math.kth.se}

\thanks{This research was conducted when the authors visited Institut Mittag-Leffler (Djursholm, Sweden). }
\date{8 June, 2010}  

\begin{abstract}
We study random graphs, both $G(n,p)$ and $G(n,m)$, with random
orientations on the edges. For three fixed distinct vertices $s,a,b$ we
study the 
correlation, in the combined probability space, of the events $\{a\to s\}$
and $\{s\to b\}$. 

For $\gnp$, we prove that 
there is a $\pc=1/2$ such that
for a fixed $p<\pc$ the correlation is negative for large enough $n$ and for
$p>\pc$ the correlation is positive for large enough $n$. 
We conjecture that for a fixed $n\ge 27$ the
correlation changes sign three times for three critical values of $p$. 

For $G(n,m)$ it is similarly proved that, with $p=m/\binom{n}{2}$, 
there is a critical
$\pc$ that is the
solution to a certain equation and approximately equal to 0.7993. A lemma,
which computes the probability of non existence of any $\ell$ directed edges
in $\gnm$, is thought to be of independent interest. 

We present exact recursions to compute $\P(a\to s)$ and $\P(a\to s, s\to
b)$. We also briefly discuss the corresponding question in the quenched
version of the problem. 
\end{abstract}

\maketitle

\section{Introduction} \label{S:Intro}
For a graph $G=(V,E)$ we orient each edge with equal probability for the two possible directions and independent of all other edges. 
This model has been studied previously in for instance \cite{AL1, G00,
  SL2}. Let $s,a,b$ be fixed distinct vertices in the graph. 
Let $\{a\to s\}$ denote the event that there exists a directed path from $a$ to $s$. 
The object of this paper is to study the correlation of the two events $\{a\to s\}$ and 
$\{s\to b\}$ in a random graph. 
One might intuitively guess that they are negatively correlated in any graph, that is $\P(a\to s, s\to b) - \P(a\to s) \P(s\to b)< 0$. 
This is however not true for all graphs. 
In fact, the smallest counter example is the graph on four vertices with all edges except $\{a,b\}$. 
In \cite{AL1} it was proved that for the complete graph $K_n$ they are negatively correlated for $n=3$, independent for $n=4$ 
and positively correlated for $n\ge 5$. 
The results in \cite{AL1} suggested that the correlation seemed more likely to be positive in dense graphs, which led us to further investigate the correlation in $G(n,p)$ and in $G(n,m)$ as presented in the present paper. 

 There are two different ways of combining the two random processes. In this paper we deal with the annealed case, which means that we consider the joined probability space of $G(n,p)$ (or $G(n,m)$) and that of edge orientations. We then consider correlations of events in this space. The other possibility is the quenched version, in which one for each graph in $G(n,p)$ (or $G(n,m)$)
computes the correlation of the two events in the probability space of all
edge orientations in that graph. Then one takes the expected value over all
graphs. We discuss briefly 
the quenched version 
in Section \ref{S:Quenched}.
 
\bigskip
 In the random graph $G(n,p)$ there are $n$ vertices and every edge exists
 with probability $p$, $0\le p\le 1$, independently of other edges. Let
 $\ggnp$ be a directed graph obtained by giving each edge in $\gnp$ a random
 direction. 
Our main result in Section \ref{S:Gnp} implies that for a fixed $p$ the events are positively correlated when $p>1/2$ 
for large enough $n$, but negatively correlated when $p<1/2$ for large enough $n$. 
To be more precise, in Theorem \ref{T:korrp} we prove that 
\[ \frac{\P(a\notto s, s\notto b)-\P(a\notto s)\P(s\notto b)}{\P(a\notto s, s\notto b)}\to \frac{2p-1}{3} \text{ as }n\to \infty.\] 
Note that the covariance is bilinear so that $\P(a\to s, s\to b) - \P(a\to s) \P(s\to b) = \P(a\notto s, s\notto b) - \P(a\notto s) \P(s\notto b)$.  
Thus we may instead study the complementary events $\{a\notto s\}$, that there does not exist a directed path from $a$ to $s$,  
and $\{s\notto b\}$, which turns out to be more practical. The change of sign for the correlation at $p=1/2$ was unexpected for us. An attempt to an heuristic explanation is given just after the statement of Theorem \ref{T:korrp}.

In Section \ref{S:Rec} we give exact recursions for the probabilities $\P(a\notto s),\P(s\notto b)$ and $\P(a\notto s, s\notto b)$. 
This is done by studying the size of the set of the vertices that can be reached along a directed path from a given vertex $s$ and 
the size of the set of vertices from which there is a directed path to $s$. 
We give recursions for the joint distribution of these sizes, which could be of independent interest.

The situation seems however to be much more complicated than what is suggested by Theorem \ref{T:korrp}. 
In Section \ref{S:Comp} we report our results from exact computations using the recursions. 
It turns out that for $n\ge 27$ the difference $\P(a\to s, s\to b)- \P(a\to s)\cdot \P(s\to b)$ seems to change sign three times. 
First twice for small $p$ (roughly $constant/n$) and then it goes from negative to positive again just before $p=1/2$. 
We find this behavior mysterious but conjecture it to be true in general and plan to study this further in a coming paper.

\medskip
For $0\le m\le \binom n2$, $\gnm$ is the random graph with $n$
vertices (which we take as $1,\dots,n$) and $m$ edges, uniformly
chosen among all such graphs. 
We let $\ggnm$ be the directed random graph obtained from $\gnm$ by
giving each edge  a random direction.
It is less straightforward to study the probabilities in $\ggnm$, but thanks
to Lemma \ref{L:Gnm}, we can prove the corresponding statement for $\ggnm$
in parallel with the proof for $\ggnp$. Here the critical probability is
$\approx 0.799288221$, see Theorem \ref{T:Gnm}. The exact computations and
the simulations for $\ggnm$ do not give reason to believe that there is
more than one critical probability for a fixed $n$. It might be
consternating that $\ggnm$ and $\ggnp$ behave so differently. In Section
\ref{S:Variance} we explain this and prove that the correlation in $\ggnp$
is always larger, in a certain sense, than in $\ggnm$. 
The difference is expressed as a variance for which we give an exact
asymptotic expression,  
see \refT {T:varcond}.

\medskip
The background of the questions studied in this paper is as follows. Let $G$ be any graph and $a,b,s,t$ distinct vertices of $G$. 
Further, assign a random orientation to the graph as described above, that is each edge is given its direction independent of the 
other edges. In \cite{SL2} it was proved that then the events $\{s\to a\}$ and $\{s\to b\}$ are positively correlated. 
This was shown to be true also if we first conditioned on $\{s\notto t\}$, i.e.
$\P(s\to a, s\to b\mid s\notto t )\ge \P(s\to a\mid s\notto t)\cdot \P(s\to
b\mid s\notto t)$. Note that it is not intuitively clear why this is so.  
It is for instance no longer true if we instead condition on $\{s\to t\}$. 
In another direction, it was proved that $\P(s\to b, a\to t\mid s\notto t )\le \P(s\to b\mid s\notto t)\cdot \P(a\to t\mid s\notto t)$. 
The proofs in \cite{SL2} relied heavily on the results in \cite{vdBK} and \cite{vdBHK} where similar statements were proved for 
edge percolation on a given graph. 
These questions have to a large extent been inspired by an interesting conjecture due to Kasteleyn named the Bunkbed conjecture 
by H\"aggstr\"om \cite{OH2}, see also \cite{SL1} and Remark 5 in \cite{vdBK}.

\medskip
\noindent
{\bf Acknowledgment:} We thank the anonymous referees for very helpful comments.

\section{Main Theorems} \label{S:Gnp}

Let $G(n,p)$ be the random graph where each edge has probability $p$ of
being present independent of other edges. We further orient each present
edge either way independently with probability $\frac{1}{2}$ and call the
resulting random directed graph $\ggnp$.  
It turns out to be convenient
to study the negated events $\{a \notto s\}$ and  $\{s\notto b\}$. We say that
two events $A$ and $B$ are negatively correlated if 
\[\Cov(A,B)\=\P(A\cap B)-\P(A)\P(B)<0,
\]
and positively correlated if the inequality goes the other way. If $A$ and $B$ are negatively correlated, then $A$ and $\neg B$ are positively correlated. Thus $a \notto s$ and $s \notto b$ are negatively correlated if and only if 
$\{a\to s\}$ and $\{s\to b\}$ are negatively correlated. Trivially, $\P(a \notto s)=\P(s \notto b)$.

All unspecified limits are as \ntoo.

The case $p=1$ corresponds to random orientations of the complete graph
$K_n$. In \cite{AL1} it was proved that $a \notto s$ and $s \notto b$ are
negatively correlated in $K_3$, independent in $K_4$ and positively
correlated in $K_n$ for $n\ge 5$. It was in fact proved that the relative
covariance $(\P(a \notto s, s \notto b)-\P(a \notto s)\P(s \notto b))/\P(a
\notto s, s \notto b)$ converged to $\frac 13$ as $n\to \infty$.

For $\ggnp$ our main theorem is as follows.

\begin{theorem}\label{T:korrp} For a fixed $p\in\hoi$ and fixed distinct
vertices $a,b,s\in\ggnp$ we have the following limit of the relative covariance
\[\lim_{n\to\infty} \frac{\P(a \notto s, s \notto b)-\P(a \notto s)\P(s \notto b)}{\P(a \notto s, s \notto b)}=\frac{2p-1}{3}.\]

In particular, for large enough $n$:
\begin{romenumerate}
  \item
If\/ $0<p<1/2$, then 
$\Cov(s\notto a ,\,b\notto s)<0$.
  \item
If\/ $1/2<p\le 1$, then 
$\Cov(s\notto a ,\,b\notto s)>0$.
\end{romenumerate}

Furthermore,
\begin{equation}
  \label{korrp}
\Cov(s\notto a ,\,b\notto s)
=(2 p-1+o(1))(1- p/2)^{2n-3}.
\end{equation}
\end{theorem}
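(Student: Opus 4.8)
The plan is to reduce everything to the asymptotics of $\P(a\notto s)$ and of $\P(a\notto s,\,s\notto b)$. By edge‑reversal symmetry of $\ggnp$ and bilinearity of the covariance, together with $\P(a\notto s)=\P(s\notto b)$,
\[
\Cov(s\notto a,\,b\notto s)=\Cov(a\notto s,\,s\notto b)=\P(a\notto s,\,s\notto b)-\P(a\notto s)^2 ,
\]
and the displayed relative covariance is this quantity divided by $\P(a\notto s,\,s\notto b)$. Hence, once both probabilities are known up to a factor $1+o(1)$, the limit, the estimate \eqref{korrp}, and the sign statements (i)--(ii) (valid for large $n$ since $2p-1\neq0$ off the critical point) all follow by elementary algebra.

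For $\P(a\notto s)$, sum over the possible values of the out‑set $\cO_a=\{v:a\to v\}$; the event is $\{s\notin\cO_a\}$. For $a\in S\not\ni s$, the event $\{\cO_a=S\}$ forces every one of the $|S|(n-|S|)$ potential edges across the boundary of $S$ to be absent or directed into $S$ --- probability $1-p/2$ each --- and, conditionally on that, forces the orientation inside $S$ to make every vertex of $S$ reachable from $a$; thus $\P(\cO_a=S)=(1-p/2)^{|S|(n-|S|)}\,r_S$ with $r_S\in[0,1]$. The exponent $|S|(n-|S|)$ attains its minimum $n-1$ only for $|S|=1$ (then $S=\{a\}$, $r_S=1$) and $|S|=n-1$ (then $S=V\setminus\{s\}$, and $r_S\to1$ by a union bound over $S\setminus\{a\}$ using the crude bound $\P(a\notto v)=O((1-p/2)^{n-1})$, which is proved first); all other $S$ have exponent $\ge 2n-4$, and $\sum_{k=2}^{n-2}\binom{n}{k}(1-p/2)^{k(n-k)}=o((1-p/2)^{n-1})$. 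Hence $\P(a\notto s)=(2+o(1))(1-p/2)^{n-1}$ and $\P(a\notto s)^2=(4+o(1))(1-p/2)^{2n-2}$.

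For the joint probability --- the heart of the matter --- classify according to the pair $(\cI_s,\cO_s)$, where $\cI_s=\{v:v\to s\}$ and $\cO_s=\{v:s\to v\}$; the event is $\{a\notin\cI_s\}\cap\{b\notin\cO_s\}$. For an admissible pair $(T,U)$ ($s\in T\cap U$, $a\notin T$, $b\notin U$), let $C=T\cap U$ be the strongly connected component of $s$ and $R=V\setminus(T\cup U)$. The event $\{\cI_s=T,\ \cO_s=U\}$ pins down all boundary edges --- edges across the boundary of $T$ must not enter $T$, edges across the boundary of $U$ must enter $U$ (so the $C$--$R$ edges must be absent) --- and the remaining constraints are internal: $C$ strongly connected, the tail $T\setminus C$ reaching $C$, and $C$ reaching the tail $U\setminus C$. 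Tallying the exponents of $1-p/2$ (forced‑orientation edges) and of $1-p$ (forced‑absent edges), the maximal order is $(1-p/2)^{2n-3}$, attained by exactly five configurations: $(T,U)=(\{s\},V\setminus\{b\})$ and $(V\setminus\{a\},\{s\})$ --- with $s$ a source, resp.\ a sink --- each of probability $\tfrac{2(1-p)}{2-p}(1-p/2)^{2n-3}(1+o(1))$; $(T,U)=(\{s,b\},V\setminus\{b\})$ and $(V\setminus\{a\},\{s,a\})$ --- with $b$ a source, resp.\ $a$ a sink --- each of probability $\tfrac{p}{2-p}(1-p/2)^{2n-3}(1+o(1))$; and $(T,U)=(V\setminus\{a\},V\setminus\{b\})$ with $C=V\setminus\{a,b\}$ --- with $a$ a sink and $b$ a source --- of probability $(1+o(1))(1-p/2)^{2n-3}$. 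The $o(1)$'s use, for fixed $p$, that a random orientation of $G(m,p)$ is strongly connected with probability $1-o(1)$ and that a fixed vertex reaches all others with probability $1-o(1)$; and the total contribution of all remaining pairs is $o((1-p/2)^{2n-3})$ by first‑moment bounds of the form $\sum_k\binom{n}{k}(1-p/2)^{k(n-k)}$. Since $\tfrac{2p}{2-p}+\tfrac{4(1-p)}{2-p}+1=3$, this yields $\P(a\notto s,\,s\notto b)=(3+o(1))(1-p/2)^{2n-3}$.

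Finally, using $4(1-p/2)^{2n-2}=(4-2p)(1-p/2)^{2n-3}$,
\[
\Cov(s\notto a,\,b\notto s)=(3+o(1))(1-p/2)^{2n-3}-(4+o(1))(1-p/2)^{2n-2}=(2p-1+o(1))(1-p/2)^{2n-3},
\]
which is \eqref{korrp}; dividing by $\P(a\notto s,\,s\notto b)=(3+o(1))(1-p/2)^{2n-3}$ gives the limit $(2p-1)/3$, and for $n$ large the covariance has the sign of $2p-1$, giving (i)--(ii). The $\ggnm$ statement (\refT{T:Gnm}) follows from the same classification, the one change being that the products of $(1-p/2)$'s and $(1-p)$'s above --- which used edge independence --- are replaced by the exact probabilities of the corresponding edge‑patterns, furnished by \refL{L:Gnm}. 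The main obstacle is the third step: identifying the finitely many extremal pairs $(T,U)$, evaluating their probabilities (which rests on the strong‑connectivity and reachability facts above), and --- the genuinely tedious part --- showing that the remaining exponentially many configurations contribute only $o((1-p/2)^{2n-3})$.
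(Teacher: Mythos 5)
Your argument is correct, and it arrives at the same two asymptotic inputs the paper uses --- $\P(a\notto s)=(2+o(1))(1-p/2)^{n-1}$ and $\P(a\notto s,\,s\notto b)=(3+o(1))(1-p/2)^{2n-3}$ --- after which the limit, the sign statements and \eqref{korrp} follow by the same algebra ($3-(4-2p)=2p-1$). The difference lies in the decomposition. The paper (Lemmas \ref{L:B1}, \ref{L:saGnp}, \ref{L:sabGnp}) characterizes non-connection by the existence of a separating inset/outset and reduces the joint event to a union of three \emph{witness} events ($\cI_{s,b}\cap\cI_b$, $\cO_{s,a}\cap\cO_a$, $\cO_a\cap\cI_b$ in its notation), each forbidding exactly $2n-3$ directed edges and hence having probability exactly $(1-p/2)^{2n-3}$, with negligible pairwise intersections; no internal-connectivity facts are needed. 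You instead partition exactly over the values of the in- and out-clusters of $s$ (essentially the quantities $M_p(n,k,m,r)$ of \refS{S:Rec}), getting five disjoint dominant cells with coefficients $\tfrac{2(1-p)}{2-p},\tfrac{2(1-p)}{2-p},\tfrac{p}{2-p},\tfrac{p}{2-p},1$ summing to $3$. This is finer and probabilistically transparent, but it obliges you to show that the internal reachability/strong-connectivity conditions hold with probability $1-o(1)$; that is true for fixed $p>0$ and in fact follows from your own crude bound $\P(a\notto v)=O\bigpar{(1-p/2)^{n-1}}$ by a union bound over pairs, so no outside black box is really needed, but it is an extra step that the paper's witness-event formulation avoids entirely. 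Both routes must dispose of the exponentially many non-extremal configurations with the same first-moment sums $\sum_k\binom nk(1-p/2)^{k(n-k)}$ (plus, for the mixed patterns that forbid whole edges at $s$, the observation that $(1-p)^{n}$ beats $(1-p/2)^{2n}$ by a factor $(1+p_0^2/4)^{-n}$), which you correctly flag as the tedious part; your exponent bookkeeping agrees with the paper's. Your closing remark on $\ggnm$ --- replace the products of $(1-p/2)$'s and $(1-p)$'s by the exact probabilities supplied by \refL{L:Gnm} and \refL{L0} --- is precisely how the paper runs the two proofs in parallel.
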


One way to heuristically understand the critical probability $1/2$ is as
follows. For the event $\{s\notto a\}$ the two cases that dominate for large
$n$ are $i)$ no edge points out from $s$ and $ii)$ no edge points in to $a$,
each with probability $(1-p/2)^{n-1}$. Similarly for $\{b\notto s\}$. Thus
$\P(s\notto a)\P(b\notto s)$ is roughly $4\cdot (1-p/2)^{2n-2}$.
For the
joint probability $\P(s\notto a, b\notto s)$ on the other hand, we only need
to consider three of these cases since no edges at all around $s$ has a much
smaller probability. This gives a negative contribution to the covariance
and is perhaps the most striking contribution. But, the other three cases
have in the joint probability a factor $1-p/2$ less since one edge is
common, which gives a positive contribution. Thus the quotient
$\frac{\P(s\notto a, b\notto s)}{\P(s\notto a)\P(b\notto s)}\sim
\frac{3/4}{1-p/2}$, which is greater than 1 if and only if $p>1/2$. 

Theorem \ref{T:korrp} was proved in an earlier version of this manuscript,
see \cite{AL2}, with a rather lengthy calculation to estimate the involved
probabilities. In the present version we will present a shorter proof which
also has the advantage that it allows us to prove the covariance also in
$\ggnm$. However, we want to point out that the lengthy calculations in
\cite{AL2} could be shortened with a clever lemma by Backelin, see
\cite{B}.

For $\ggnm$, we obtain the corresponding result.

\begin{theorem}\label{T:Gnm}
  Suppose that $m,n\to\infty$, and
 that\/ $m/\nn\to p\in\hoi$. 
Let $\pc\approx 0.799$ be the unique root in $\oi$ of\/ 
$3 e^{-\frac{2p(1-p)}{(2-p)^2}}=4-2p$.
If $n$ is large enough, then for any
distinct vertices $s,a,b\in \ggnm$:
\begin{romenumerate}
  \item
If\/ $0<p<\pc$, then 
$\Cov(s\notto a ,\,b\notto s)<0$.
  \item
If\/ $\pc<p\le 1$, then 
$\Cov(s\notto a ,\,b\notto s)>0$.
\end{romenumerate}
\end{theorem}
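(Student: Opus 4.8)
The plan is to prove Theorem~\ref{T:Gnm} in close parallel with Theorem~\ref{T:korrp}, the only essential new ingredient being Lemma~\ref{L:Gnm}, which gives the probability that a prescribed set of $\ell$ directed edges is absent in $\ggnm$. First I would identify, exactly as in the $\ggnp$ heuristic, the dominant contributions to $\P(a\notto s)$ and to the joint probability $\P(a\notto s, s\notto b)$: for the single event the two competing ``cheapest'' ways to block the path are that no edge is directed out of $s$ (equivalently into $a$, depending on orientation), each such event being the non-existence of $n-1$ specified directed edges; for the joint event there are only three such leading configurations rather than four, since the configuration ``no directed edge incident to $s$ at all'' is of strictly smaller order. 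The contribution of each configuration is computed by Lemma~\ref{L:Gnm}, which plays the role that the simple product $(1-p/2)^{\ell}$ plays in the $\ggnp$ calculation.

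The second step is to extract the asymptotics of these probabilities from Lemma~\ref{L:Gnm}. Writing the lemma's formula for the non-existence of $\ell=\ell(n)$ directed edges with $\ell/n\to c$ for a constant $c$ (here $c=1$ or $c=2$) and $m/\nn\to p$, I expect an asymptotic of the shape $\exp\bigl(-c\,n\,h(p) + O(\cdot)\bigr)$ for a suitable function $h$; comparison of the exponential rates for $\ell\sim n$ versus $\ell\sim 2n$ is what produces the critical equation. Concretely, the ratio of the joint probability to the product $\P(a\notto s)\P(s\notto b)$ should tend, by the same bookkeeping as in the heuristic after Theorem~\ref{T:korrp}, to $\tfrac{3}{4}\cdot(\text{correction for one shared edge})$, and setting this limiting ratio equal to $1$ gives precisely $3e^{-2p(1-p)/(2-p)^2}=4-2p$. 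Identifying the single missing edge's ``cost'' as the factor $e^{-2p(1-p)/(2-p)^2}$ — rather than the naive $(1-p/2)$ of the independent model — is the substantive point where the $\gnm$ conditioning enters and where the two theorems genuinely differ.

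The third step is to control the error terms: one must show that all the configurations other than the handful of leading ones contribute a negligible relative amount, uniformly for $p$ in compact subsets of $\hoi$, and that the $o(1)$ in the limiting ratio is genuinely $o(1)$ as $m,n\to\infty$ with $m/\nn\to p$. This requires (i) a union bound over the non-dominant blocking configurations together with the monotonicity/accuracy of Lemma~\ref{L:Gnm}'s estimate in $\ell$, and (ii) checking that the function $p\mapsto 3e^{-2p(1-p)/(2-p)^2}-(4-2p)$ is, say, strictly monotone (or at least changes sign exactly once) on $\oi$, so that ``$\pc$ is the unique root'' is justified and the sign of the covariance is governed by the sign of this function. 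A short calculus argument — the left side is decreasing on a neighbourhood of the root while $4-2p$ is decreasing more slowly, or a direct derivative computation — settles uniqueness, and continuity plus the sign of the limiting relative covariance then yields parts (i) and (ii) for all large $n$.

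The main obstacle I anticipate is step three, specifically the uniform control of the subdominant terms: in $\ggnp$ the events ``edge $e$ is absent or misdirected'' are independent across edges, so tail estimates are immediate, whereas in $\ggnm$ the fixed number of edges $m$ induces negative dependence, and Lemma~\ref{L:Gnm} must be invoked with enough precision (and enough uniformity in $\ell$) to beat the combinatorial number of alternative blocking configurations. I would therefore lean on Lemma~\ref{L:Gnm} not just for the leading order but with its error term made explicit, and if necessary compare $\ggnm$ with $\ggnp$ at $p=m/\nn$ using a sprinkling or coupling argument — indeed the variance comparison promised in Section~\ref{S:Variance} and \refT{T:varcond} suggests such a coupling is available and can be used to transfer the $\ggnp$ tail bounds to $\ggnm$ at the cost of lower-order corrections.
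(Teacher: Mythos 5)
Your outline follows the paper's proof essentially verbatim: the paper derives Theorem~\ref{T:Gnm} from the asymptotic covariance formula of Theorem~\ref{T1} (itself obtained from Lemmas~\ref{L:saGnm} and~\ref{L:sabGnm} via exactly the inset/outset decomposition, the three-versus-four leading blocking configurations, and Lemma~\ref{L:Gnm} together with its uniform bound $q(\ell)\le(1-p/2)^\ell$ and the undirected-edge bound of Lemma~\ref{L0} that you allude to), followed by a derivative computation showing that $f(p)=3e^{-2p(1-p)/(2-p)^2}-4+2p$ is strictly increasing on $[0,1]$. One small correction: near the root $\pc\approx 0.8$ the quantity $p(1-p)/(2-p)^2$ is \emph{decreasing}, so the left-hand side $3e^{-2p(1-p)/(2-p)^2}$ is \emph{increasing} there (your parenthetical heuristic has the monotonicity backwards), but the direct computation you also propose — the paper shows $f'(p)\ge 1/2$ on all of $[0,1]$ via a partial-fraction expansion — is the argument that works.
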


In fact we have the following asymptotic formula for the covariance.

\begin{theorem}\label{T1}
  Suppose that $m,n\to\infty$, let
$p=p(n)\=m/\nn$, and assume $\liminf p(n)>0$. Then, for any
distinct vertices $s,a,b\in \ggnm$:
\begin{multline*}
\Cov(s\notto a ,\,b\notto s)
=\lrpar{\frac{3}{4-2p}\expQ{-\frac{2p(1-p)}{(2-p)^2}}-1+o(1)}
\P(s\notto a)^2
\\
=\left(3\cdot
e^{-\frac{2p(1-p)}{(2-p)^2}}-4+2p+o(1)\right)\cdot(1-p/2)^{2n-3}\cdot
e^{-\frac{2p(1-p)}{(2-p)^2}}. 
\end{multline*}
\end{theorem}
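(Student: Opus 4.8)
The plan is to obtain the asymptotics of $\P(s\notto a)$ and of $\P(s\notto a,\,b\notto s)$ separately and subtract, using the symmetry $\P(b\notto s)=\P(s\notto a)$ (reverse every orientation). The proof runs in parallel with that of \refT{T:korrp}, the only structural difference being that in $\ggnp$ the edges are independent while in $\ggnm$ one invokes \refL{L:Gnm}. Concretely I would show, for $\ggnm$,
\[
\P(s\notto a)=(2+o(1))(1-p/2)^{n-1}e^{-\pp},\qquad
\P(s\notto a,\,b\notto s)=(3+o(1))(1-p/2)^{2n-3}e^{-4\pp}
\]
(for $\ggnp$ delete the exponential factors). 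Since then $\P(s\notto a)^2=(4-2p)(1-p/2)^{2n-3}e^{-2\pp}(1+o(1))$, subtracting gives $\Cov(s\notto a,b\notto s)=\bigl(3e^{-2\pp}-(4-2p)+o(1)\bigr)(1-p/2)^{2n-3}e^{-2\pp}$, which is the second displayed formula; the first follows on dividing by $\P(s\notto a)^2$, everything being legitimate because $4-2p$ and $e^{\pp}$ stay bounded while $p$ is bounded away from $0$.

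To find the two probabilities, let $\cX$ be the set of vertices reachable from $s$ along a directed path and $\cY$ the set of vertices that reach $s$, so that $\{s\notto a\}=\{a\notin\cX\}$ and $\{b\notto s\}=\{b\notin\cY\}$. Write $\{s\notto a\}=A_0\cup D_{sa}$, where $A_0$ is the event ``$s$ has no out-edge, or $a$ has no in-edge'' and $D_{sa}$ is the complement of $A_0$ inside $\{s\notto a\}$; define $B_0,D_{bs}$ symmetrically. Expanding $(A_0\cup D_{sa})\cap(B_0\cup D_{bs})$ and discarding negligible pieces, the surviving ``dominant configurations'' are: for $\P(s\notto a)$, the events $\{\cX=\{s\}\}$ and $\{a\text{ has no in-edge}\}$; for the joint probability, the three cross terms of $A_0\cap B_0$ other than the event ``no edge at $s$'' (of probability $(1-p)^{n-1}$), together with $\{\cX=\{s,b\}\}$ (contained in $D_{sa}\cap B_0$, since $\cX=\{s,b\}$ forces $b$ to have no out-edge and hence $b\notto s$) and, symmetrically, $\{\cY=\{s,a\}\}$. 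Each is the event that a prescribed family of directed edges is absent, with underlying undirected edge set of size $k=n-1$ (``one star'') for $\P(s\notto a)$ and $k=2n-3$ (``two stars'', possibly with one shared, one doubly-forbidden, or one forced-present edge) for the joint probability. In $\ggnp$ such a probability is $(1-p)^{d}(1-p/2)^{k-d}$ ($d$ the number of doubly-forbidden edges), with an extra factor $p/2$ for the one forced-present edge in $\{\cX=\{s,b\}\}$ and $\{\cY=\{s,a\}\}$. In $\ggnm$ the same quantity is read off from \refL{L:Gnm}; a Laplace estimate of that formula changes it only by a factor $e^{-\pp}$ per star (so $e^{-\pp}$ in the first case, $e^{-4\pp}$ in the second), because $\prod_{i=0}^{j-1}\frac{m-i}{N-i}\sim p^{j}\exp(-j^{2}(1-p)/2m)$ with $N=\nn$, evaluated at the typical number $j\sim kp/(2-p)$ of present edges, contributes $\exp(-(k/n)^{2}\pp+o(1))$. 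Summing the configurations gives $(1-p/2)^{n-1}+(1-p/2)^{n-1}$ in the first case and $[\,2(1-p)+(1-p/2)+p/2+p/2\,](1-p/2)^{2n-4}=3(1-p/2)^{2n-3}$ in the second, each carrying the right power of $e^{-\pp}$; the pairwise overlaps of the configurations are of smaller order.

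The remaining step --- which I expect to be the real work --- is to show that whatever is not on the list is negligible. The workhorse estimate is $\P(\cX=W)\le\P(\text{no edge leaves }W)=(1-p/2)^{|W|(n-|W|)}$ (with an extra factor $p/2$ when $|W|=2$), valid verbatim in $\ggnm$ since, by negative association of the edge indicators of $G(n,m)$, the probability of any such forbidden-edge event is at most its value in $\ggnp$. For $\P(s\notto a)$ this is straightforward: summing over $2\le|W|\le n-2$ with $a\notin W$ gives $O(n(1-p/2)^{2n-4})=o((1-p/2)^{n-1})$, so $\P(D_{sa})=o((1-p/2)^{n-1})$. For the joint probability there is a genuine obstacle: this marginal error term is of \emph{larger} order than the joint main term $(1-p/2)^{2n-3}$, so one cannot dominate the joint probability by the marginal. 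Instead one observes that for every $W\notin\{\{s\},\{s,b\},V\setminus\{a\}\}$ with $a\notin W$ (and the mirror list for $\cY$), insisting on \emph{both} $\cX=W$ and $b\notto s$ fixes at least three pairwise-disjoint stars of edges, so $\P(\cX=W,\,b\notto s)=O\bigl(n^{O(1)}(1-p/2)^{3n-O(1)}\bigr)=o((1-p/2)^{2n-3})$ even after the $O(n)$-fold sum over such $W$; the point of the exceptions $W=\{s,b\}$ and $\cY=\{s,a\}$ is precisely that they enforce $b\notto s$ (respectively $a\notto s$) for free, so only two stars are charged. Finally $D_{sa}\cap D_{bs}$ forces $s$ and $b$ to have out-edges and $a$ and $s$ to have in-edges, which after excluding $\{\cX=\{s,b\}\}$ and $\{\cY=\{s,a\}\}$ again costs three stars and is negligible. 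All the $O(\cdot)$ and $o(\cdot)$ bounds are uniform for $p$ bounded away from $0$, which deals with the hypothesis $\liminf p(n)>0$ and with $p\to1$.
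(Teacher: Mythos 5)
Your proposal is correct and follows essentially the same route as the paper: your dominant configurations regroup exactly into the paper's three events $\cI_{s,b}\cap \cI_b$, $\cO_{s,a}\cap \cO_a$, $\cO_a\cap \cI_b$ (each forbidding exactly $2n-3$ directed edges), your ``three pairwise-disjoint stars'' bound for the remainder is the argument of \refL{L:sabGnm}, and the transfer to $\ggnm$ is \refL{L:Gnm}. The one thing to tidy is that \refL{L:Gnm} gives probabilities only for events forbidding a prescribed set of \emph{directed} edges, so in $\ggnm$ you should apply it to the regrouped unions (e.g.\ to $\cI_{s,b}\cap\cI_b=(\cI_s\cap\cI_b)\cup\{\cX=\{s,b\}\}$ as a whole) rather than separately to the pieces involving a wholly-absent or a forced-present edge, for which the lemma as stated does not directly apply.
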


\section{Two lemmas}

In order to prove the theorems for $\ggnp$ and $\ggnm$ in parallel we need two lemmas.

\begin{lemma}
  \label{L0}
Fix $\ell\ge0$ edges in $K_n$. The probability that none of these
$\ell$ edges appears in \gnm{} is at most $\lrpar{1-m/\binom n2}^\ell$.
\end{lemma}
\begin{proof}
Let $N\=\binom n2$.
  We may assume that $m+\ell\le N$. Then the probability is
\begin{equation*}
  \frac{\binom {N-\ell}m} {\binom Nm}
=\frac{(N-\ell)!\,(N-m)!}{(N-\ell-m)!\,N!}
=\prod_{i=0}^{\ell-1} \frac{N-m-i}{N-i}
\le \prod_{i=0}^{\ell-1} \Bigpar{1-\frac{m}{N}}.
\qedhere
\end{equation*}
\end{proof}

Note that in $\gnp$ the corresponding probability is exactly $(1-p)^\ell$ by independence of the edges.
We next show a more precise version of \refL{L0} for the directed graph \ggnm.

Let $q(\ell)=q(\ell;n,m)$ be the probability that if we
fix $\ell$ edges in $K_n$ and give each an orientation, then none of
these directed edges appears in \ggnm. 
(Here $0\le \ell\le \binom n2$. 
By symmetry, the probability does not depend on the choice of the
$\ell$ edges and their orientations.) 
Let also $q'(\ell;n,p)$ be the corresponding probability in $\ggnp$. It is clear that 
 $q'(\ell;n,p)=(1-p/2)^\ell$. For $\ggnm$ we have the following result, which we believe is of
  independent interest. 
  
\begin{lemma}
  \label{L:Gnm}
Suppose that $\ell=\ell(n)=O(n)$ and that $0\le m=m(n)\le\binom
n2$. Then, with $p=p(n)=m(n)/\nn$, as \ntoo,
\begin{equation}\label{lgnm}
  q(\ell;n,m)\sim
(1-p/2)^\ell \exp\lrpar{-\Bigparfrac \ell n ^2\frac{p(1-p)}{(2-p)^2}}.
\end{equation}
Furthermore, for any $\ell,n,m$ we have $q(\ell;n,m)\le
q'(\ell;n,p)=(1-p/2)^\ell$.
\end{lemma}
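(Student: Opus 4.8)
The plan is to generate $\ggnm$ in two stages and thereby reduce $q(\ell;n,m)$ to a binomial average. Take a uniformly random orientation $\vec K_n$ of all $N\=\nn$ edges of $K_n$, and then keep a uniformly random $m$-subset of these $N$ directed edges; the result has the law of $\ggnm$. Fix $\ell$ directed edges $\vec e_1,\dots,\vec e_\ell$ sitting on distinct underlying edges, and let $J\subseteq\{1,\dots,\ell\}$ be the random set of indices $i$ for which $\vec e_i$ agrees with the orientation that $e_i$ received in $\vec K_n$; then $|J|\sim\Bin(\ell,\tfrac12)$. Given $|J|=k$, none of the $\vec e_i$ occurs in $\ggnm$ precisely when the random $m$-subset avoids the $k$ edges $\{e_i:i\in J\}$, an event of probability $\binom{N-k}{m}/\binom Nm$. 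Hence
\[
  q(\ell;n,m)=\E\Bigl[\,\binom{N-|J|}{m}\Big/\binom Nm\,\Bigr],\qquad |J|\sim\Bin(\ell,\tfrac12).
\]
The second assertion follows at once: by \refL{L0} applied to the $|J|$ edges in $J$, $\binom{N-k}{m}/\binom Nm\le(1-p)^k$, so $q(\ell;n,m)\le\E[(1-p)^{|J|}]=\bigl(\tfrac{1+(1-p)}{2}\bigr)^\ell=(1-p/2)^\ell=q'(\ell;n,p)$.

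For the asymptotic formula \eqref{lgnm} I would first assume $p<1$ (the case $p=1$ being trivial, with $q(\ell)=2^{-\ell}$) and factor $\binom{N-k}{m}/\binom Nm=(1-p)^k\,W(k)$, where $W(k)\=\prod_{j=0}^{k-1}\bigl(1-\tfrac{jp}{(1-p)(N-j)}\bigr)\in[0,1]$ (with $W(k)=0$ once $k>N-m$). The key bookkeeping step is the exponential-tilting identity $\binom{\ell}{k}2^{-\ell}(1-p)^k=(1-p/2)^\ell\,\P(K=k)$ for $K\sim\Bin\!\bigl(\ell,\tfrac{1-p}{2-p}\bigr)$, which turns the reduction formula into the clean
\[
  q(\ell;n,m)=(1-p/2)^\ell\,\E[W(K)].
\]
It remains to show $\E[W(K)]\sim\exp\!\bigl(-(\ell/n)^2\tfrac{p(1-p)}{(2-p)^2}\bigr)$. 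Expanding $\log W(k)=\sum_{j<k}\log\bigl(1-\tfrac{jp}{(1-p)(N-j)}\bigr)$ and using that near the mean $k^\ast\=\E K=\ell\tfrac{1-p}{2-p}$ every summand is $O(1/n)$ — here the crucial point is that $k^\ast/\bigl((1-p)N\bigr)=\ell/\bigl((2-p)N\bigr)=O(1/n)$, so the factor $(1-p)$ in the denominators does no harm, \emph{uniformly in $p$} — one gets
\[
  \log W(k^\ast)=-\frac{p}{1-p}\cdot\frac{(k^\ast)^2}{2N}+o(1)
  =-\Bigparfrac{\ell}{n}^2\frac{p(1-p)}{(2-p)^2}+o(1).
\]

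To replace $W(k^\ast)$ by $\E[W(K)]$ I would run a Laplace-type concentration argument. When $k^\ast\to\infty$: $K$ deviates from $k^\ast$ by $\Op(\sqrt{k^\ast}\log n)$, over which window $\log W$ changes by only $O(\ell^{3/2}N^{-1}\log n)=o(1)$; the complementary event has probability $o(1)$ by Chernoff and contributes $o(1)$ to $\E[W(K)]$ since $0\le W\le1$, whereas $W(k^\ast)=e^{-\lambda+o(1)}$ with $\lambda\=(\ell/n)^2\tfrac{p(1-p)}{(2-p)^2}$ bounded. Hence $\E[W(K)]=W(k^\ast)(1+o(1))$, as required. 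In the complementary regime $k^\ast=O(1)$ one has $\lambda=O(\ell k^\ast/n^2)\to0$, and $\E[W(K)]\to1$ follows directly from $0\le1-W(k)\le\sum_{j<k}\tfrac{jp}{(1-p)(N-j)}\le 2pk^2/\bigl((1-p)N\bigr)$ together with $\E[K^2]=O\bigl(k^\ast+(k^\ast)^2\bigr)$ and $p(k^\ast)^2/\bigl((1-p)N\bigr)=O(k^\ast/n)$. Combining the two regimes gives \eqref{lgnm}.

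The main obstacle is \emph{uniformity}: the estimates above must hold for every $p\in\oi$ and every $\ell=O(n)$, and the two awkward corners are the near-complete regime $1-p=O(1/n)$ (where $(1-p)N$ may be as small as $\Theta(n)$, so a careless expansion of $\log W$ blows up for large $k$ — but those $k$ carry negligible tilted mass, which is why the whole argument is organized around $k^\ast$) and bounded $\ell$ (where $q(\ell)$ is a finite sum to be treated by hand). The tilting identity is precisely the device that tames this: it recenters the sum at $k^\ast$, the one location where all Taylor remainders are uniformly $O(1/n)$.
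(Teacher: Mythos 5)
Your argument is correct, and it reaches the same two identities as the paper by a genuinely different route. The paper conditions on the number $X$ of the $\ell$ underlying edges that appear in $\gnm$ (a hypergeometric variable) to get $q(\ell)=\E 2^{-X}$, rewrites the hypergeometric weights via falling-factorial ratios $a(J,j)$, and arrives at $q(\ell)=a(N,\ell)^{-1}(1-p/2)^{\ell}\,\E\bigl[a(m,Y)a(N-m,\ell-Y)\bigr]$ with $Y\sim\Bin(\ell,p/(2-p))$; it then passes to subsequences along which $\ell/n\to\lambda$ and $p\to\pi$, proves convergence in probability of each factor by the law of large numbers, and finishes with dominated convergence. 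You instead generate $\ggnm$ orientation-first, condition on the binomial count $|J|$ of agreeing orientations to get $q(\ell)=\E\bigl[\binom{N-|J|}{m}/\binom Nm\bigr]$, and tilt to $K\sim\Bin(\ell,(1-p)/(2-p))$; your Laplace-window computation at the tilted mean $k^{\ast}$ is the analogue of the paper's in-probability limits, and your observation that $k^{\ast}/((1-p)N)=\ell/((2-p)N)=O(1/n)$ is exactly what makes the estimate uniform in $p$ near $1$, where a naive expansion of $\log W$ would break down. Your split into the regimes $k^{\ast}\to\infty$ and $k^{\ast}=O(1)$ implicitly uses the same subsequence device the paper makes explicit, so you should state that step. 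The clearest gain of your version is the inequality $q(\ell;n,m)\le(1-p/2)^{\ell}$: you get it in one line from \refL{L0} and the binomial generating function $\E[(1-p)^{|J|}]=(1-p/2)^{\ell}$, whereas the paper invokes Hoeffding's comparison of $\E a^{X}$ for hypergeometric versus binomial $X$; your derivation is more elementary and self-contained. One small slip in your closing discussion: $(1-p)N=N-m$ can be as small as $0$ or $1$, not merely $\Theta(n)$; this does not affect the argument, since $W(k)$ vanishes for $k>N-m$ and the tilted mass then concentrates at $k=0$, exactly as you handle in the $k^{\ast}=O(1)$ regime.
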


\begin{proof}
Let $N\=\binom n2$. The cases $m=0$ and $m=N$ are trivial, so we may
assume that $0<m<N$ and thus $0<p<1$.

Let $X$ be the number of the $\ell$ chosen edges that appear in \gnm,
ignoring orientations. Then $X$ has the hypergeometric distribution
\begin{equation}\label{a2}
  \P(X=k)=\frac{\binom \ell k \binom{N-\ell}{m-k}}{ \binom{N}{m}}
=\binom\ell k \frac{(N-\ell)!\,(N-m)!\, m!}{N!\,(N-m-\ell+k)!\, (m-k)!}.
\end{equation}
Define, for $J,j\ge0$, $a(J,j)\=J^{-j}J!/(J-j)!$ (interpreted as 0
when $j>J$).
Then \eqref{a2} can be written as
\begin{equation*}
  \begin{split}
  \P(X=k)
&=\binom\ell k a(N,\ell)\qw N^{-\ell}
  a(N-m,\ell-k)(N-m)^{\ell-k}a(m,k)m^k	
\\
&=\binom\ell k p^k(1-p)^{\ell-k}a(N,\ell)\qw a(N-m,\ell-k)a(m,k).	
  \end{split}
\end{equation*}
Given $X=k$, the probability that none of the selected directed edges
appears in $\ggnm$ equals the probability that none of the $k$ selected
edges that appear in $\gnm$ gets the forbidden orientation, which
equals $(1/2)^k$.
Hence,
\begin{equation*}
  \begin{split}
q(\ell)
&=\sum_{k=0}^\ell 2^{-k}\P(X=k)
\\
&=\sum_{k=0}^\ell 
\binom\ell k \Bigparfrac p2^k(1-p)^{\ell-k}a(N,\ell)\qw a(N-m,\ell-k)a(m,k)
\\
&= a(N,\ell)\qw(1-p/2)^{\ell}\sum_{k=0}^\ell 
\binom\ell k \Bigparfrac{p/2}{1-p/2}^k\Bigpar{1-\frac{p/2}{1-p/2}}^{\ell-k} 
a(N-m,\ell-k)a(m,k)
\\
&= a(N,\ell)\qw(1-p/2)^{\ell}
\E\bigpar{a(m,Y)a(N-m,\ell-Y)},
  \end{split}
\end{equation*}
where $Y$ has the binomial distribution $\Bin(\ell,p/(2-p))$.
Consequently, \eqref{lgnm} is equivalent to 
\begin{equation}
  \label{a3}
a(N,\ell)\qw\E\bigpar{a(m,Y)a(N-m,\ell-Y)}
\sim
\exp\lrpar{-\Bigparfrac \ell n ^2\frac{p(1-p)}{(2-p)^2}}.
\end{equation}

In order to show \eqref{a3}, it suffices to show that every
subsequence has a subsequence where \eqref{a3} holds. (See \cite[p.\ 12]{JLR}.)
Consequently, by choosing suitable subsequences, we may assume that
\begin{equation}\label{a4}
  \ell/n\to\gl
\qquad\text{and}\qquad
p\to\pi,
\end{equation}
for some $\gl\in[0,\infty)$ and $\pi\in\oi$;
furthermore, we may assume that either
\begin{equation}\label{a4a}
  \ell\to\infty
\qquad\text{or}\qquad
l=O(1).
\end{equation}

We next observe that for all $J\ge1$ and $j\le J$,
\begin{equation*}
  a(J,j)=J^{-j}\prod_{i=0}^{j-1}(J-i)
=\prod_{i=0}^{j-1}\Bigpar{1-\frac iJ}
=\exp\lrpar{\sum_{i=1}^{j-1}\log\Bigpar{1-\frac iJ}}.
\end{equation*}
It follows that $0\le a(j,J)\le 1$ for all $j$ and $J$. Moreover, if
  $j=O(J\qq)$, then, if also $j\le J$,
\begin{equation*}
  \begin{split}
  {\sum_{i=1}^{j-1}\log\Bigpar{1-\frac iJ}}
&=
\sum_{i=1}^{j-1}\lrpar{-\frac iJ+O\Bigpar{\frac iJ}^2}
=-\frac{j^2-j}{2J}+O\Bigpar{\frac {j^3}{J^2}}
\\&
=-\frac{j^2}{2J}+O\bigpar{J\qqw}	
  \end{split}
\end{equation*}
and thus (also in the trivial case $j>J$)
\begin{equation}
  \label{a5}
a(J,j)=\expBig{-\frac{j^2}{2J}}+O\bigpar{J\qqw}.
\end{equation}

In particular, \eqref{a5} applies to $a(N,\ell)$, because
  $\ell=O(n)=O(N\qq)$, and thus
\begin{equation}
  \label{a5b}
a(N,\ell)
=\expbig{-\xfrac{\ell^2}{2N}}+o(1)
\to\expbig{-\gl^2},
\end{equation}
assuming \eqref{a4}.

Next, consider $a(m,Y)$, assuming \eqref{a4}--\eqref{a4a}.
Assume first $\pi>0$. If also $\ell\to\infty$, then by the law of
large numbers, $Y/\ell\pto \pi/(2-\pi)$, and thus
\begin{equation}\label{a6}
  \frac{Y^2}{2m}
=
\Bigparfrac Y\ell^2\Bigparfrac \ell n^2 \cdot\frac{n^2/2}{m}
\pto
\Bigparfrac \pi{2-\pi}^2 \gl^2 \frac1\pi
=\frac{\pi}{(2-\pi)^2}\gl^2.
\end{equation}
On the other hand,  if $\ell=O(1)$, then $\gl=0$; furthermore,
$Y\le\ell=O(1)$ and $m\to\infty$ (because $\pi>0$), so $Y^2/2m\pto0$; 
hence \eqref{a6} holds in this case too.

In the case $\pi=0$, so $p\to0$, we instead use
\begin{equation*}
  \E Y=\ell p/(2-p)\le p\ell =o\bigpar{p\qq\ell},
\end{equation*}
and thus $Y/(p\qq\ell)\pto0$. Hence,
\begin{equation*}
  \frac{Y^2}{2m}
=
\parfrac Y{p\qq\ell}^2\cdot\frac {p\ell^2}{2m}
=
\parfrac Y{p\qq\ell}^2\cdot\frac {\ell^2}{n(n-1)}
\pto
0
=\frac{\pi}{(2-\pi)^2}\gl^2,
\end{equation*}
so \eqref{a6} holds in this case too.

Consequently, \eqref{a6} holds in every case.
In particular, $Y=\Op(m\qq)$, and \eqref{a5} implies that,
provided $m\to\infty$,
\begin{equation}
  \label{a7a}
a(m,Y)=\explr{-\frac{Y^2}{2m}}+\op(1)
\pto\explr{-\frac{\pi}{(2-\pi)^2}\gl^2};
\end{equation}
on the other hand, if $m=O(1)$, then $\P(Y=0)\to1$, e.g.\ by \eqref{a6},
so \eqref{a7a} holds in this case too. Consequently, \eqref{a7a} holds in
every case.

Since $\ell-Y\sim\Bin\bigpar{\ell,1-p/(2-p)}=\Bin(\ell,2(1-p)/(2-p))$,
it follows by similar arguments, now considering the case $\pi=1$
separately, that
\begin{equation*}
  \frac{(\ell-Y)^2}{2(N-m)}
\pto
\frac{4(1-\pi)}{(2-\pi)^2}\gl^2
\end{equation*}
and
\begin{equation}
  \label{a7b}
a(N-m,\ell-Y)
\pto\explr{-\frac{4(1-\pi)}{(2-\pi)^2}\gl^2}.
\end{equation}

Combining \eqref{a7a} and \eqref{a7b} we find
\begin{equation*}
a(m,Y)a(N-m,\ell-Y)
\pto
\explr{-\frac{\gl^2}{(2-\pi)^2}\bigpar{\pi+4(1-\pi)}}.
\end{equation*}
By dominated convergence, using $0\le a(m,Y)a(N-m,\ell-Y)\le 1$, this, together with \eqref{a5b}, yields
\begin{equation*}
  \begin{split}
a(N,\ell)\qw\E\bigpar{a(m,Y)a(N-m,\ell-Y)}
&\to
\explr{\gl^2-\frac{\gl^2}{(2-\pi)^2}(4-3\pi)}
\\
=\explr{\frac{\gl^2}{(2-\pi)^2}(\pi^2-\pi)},
  \end{split}
\end{equation*}
which is equivalent to \eqref{a3} by \eqref{a4}.
Hence \eqref{a3} holds assuming \eqref{a4}--\eqref{a4a}, and thus in general,
which shows \eqref{lgnm}.

Finally, 
let $X'\sim\Bin(\ell,p)$ be the number of the $\ell$ chosen edges
that appear in $\gnp$, ignoring orientations. Then, 
$q(\ell)=\E{\xpar{\frac12}^X}$ as shown above, and similarly 
$q'(\ell)=\E{\xpar{\frac12}^{X'}}$. 
In fact, $\E a^X \le \E a^{X'}$ for any $a>0$, see
Hoeffding \cite[Theorem 4]{Hoeffding63}, 
which, taking $a=1/2$, yields $q(\ell)\le q'(\ell)$.
\end{proof}

\section{Proofs}

Say that a set $S\subseteq V(G)$ of vertices in a directed graph $G$
is an \emph{outset} 
if there is no directed edge $vw$ with $v\notin S$, $w\in S$
(\ie, all edges between $S$ and its complement are directed out from
$S$), and an \emph{inset} 
if there is no directed edge $vw$ with $v\in S$, $w\notin S$. Hence,
$S$ is an outset if and only if its complement is an inset.

\begin{lemma}
  \label{L:B1}
If $G$ is a directed graph and $a,b\in V(G)$, then 
the following are equivalent:
\begin{romenumerate}
\item $a\notto b$.
\item 
There exists an inset $S$ with $a\in S$, $b\notin S$.
\item 
There exists an outset $T$ with $a\notin T$, $b\in T$.
\end{romenumerate}
\end{lemma}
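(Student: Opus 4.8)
The plan is to prove the three-way equivalence by showing (i)$\Rightarrow$(ii)$\Rightarrow$(iii)$\Rightarrow$(i), using the duality between insets and outsets noted just before the lemma (namely, $S$ is an inset iff its complement is an outset).

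First I would prove (i)$\Rightarrow$(ii). Assume $a\notto b$, and let $S$ be the set of all vertices reachable from $a$ by a directed path (including $a$ itself, via the trivial path). Clearly $a\in S$, and $b\notin S$ precisely because $a\notto b$. I claim $S$ is an inset: if there were a directed edge $vw$ with $v\in S$ and $w\notin S$, then since $v$ is reachable from $a$ we could extend a directed path from $a$ to $v$ by the edge $vw$, making $w$ reachable from $a$, contradicting $w\notin S$. Hence $S$ is an inset with $a\in S$, $b\notin S$.

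Next, (ii)$\Rightarrow$(iii) is immediate from the stated duality: if $S$ is an inset with $a\in S$ and $b\notin S$, then $T\=V(G)\setminus S$ is an outset, and $a\notin T$, $b\in T$.

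Finally I would prove (iii)$\Rightarrow$(i) by contradiction. Suppose $T$ is an outset with $a\notin T$ and $b\in T$, but $a\to b$; take a directed path $a=v_0,v_1,\dots,v_k=b$. Since $v_0=a\notin T$ and $v_k=b\in T$, there is a first index $i$ with $v_{i}\notin T$ and $v_{i+1}\in T$. But then $v_iv_{i+1}$ is a directed edge from outside $T$ into $T$, contradicting the assumption that $T$ is an outset. Hence $a\notto b$. This closes the cycle of implications. There is no real obstacle here; the only point requiring a little care is the ``first index where the path crosses the boundary'' argument in the last implication, and the choice of $S$ as the reachable set in the first implication — both are routine.
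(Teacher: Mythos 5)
Your proof is correct and follows essentially the same route as the paper: the reachable set $S=\{v: a\to v\}$ for (i)$\Rightarrow$(ii), complementation for (ii)$\Leftrightarrow$(iii), and the observation that a directed path cannot cross into an outset (equivalently, leave an inset) for the remaining implication. The only cosmetic difference is that you arrange the implications as a cycle while the paper proves the two biconditionals (i)$\Leftrightarrow$(ii) and (ii)$\Leftrightarrow$(iii) directly.
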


\begin{proof}
(i)$\iff$(ii):
  No directed path can leave an inset. Hence, if  an inset $S$ as in
  (ii) exists, then $a\notto b$. Conversely, if $a\notto b$, then
  $S\=\set{v\in V(G):a\to v}$ is an inset with $a\in S$, $b\notin S$.

(ii)$\iff$(iii):
Take $T\=V(G)\setminus S$, and conversely.
\end{proof}

For
$S\subseteq[n]$, let
$\cO_S$ and $\cI_S$ denote the events $\set{S \text{ is an outset}}$
and $\set{S \text{ is an inset}}$, respectively. The events will be in $\ggnm$ or $\ggnp$ depending on context.
We also write, for typographical convenience, $\cO_a\=\cO_{\set{a}}$, etc.

\begin{lemma}\label{L:saGnp}
Assume that $p\in (0,1]$ is fixed. Then, for any
distinct  $s,a\in \ggnp$,
  \begin{align*}
	\P(s\notto a)
&
=\P(\cI_s)+\P(\cO_a)+O\bigpar{n(1-p/2)^{2n}}
\\&
=2q'(n-1)+O\bigpar{n(1-p/2)^{2n}}
\\&
\sim2(1-p/2)^{n-1}.
  \end{align*}
\end{lemma}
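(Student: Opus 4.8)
\textbf{Proof plan for Lemma~\ref{L:saGnp}.}
The plan is to use Lemma~\ref{L:B1} to rewrite $\{s\notto a\}$ as the union, over all sets $S$ with $s\in S$, $a\notin S$, of the events $\cI_S$, and then to show that the union is dominated by the two "extreme" terms $S=\set s$ (giving $\cI_s$) and $S=[n]\setminus\set a$ (which is the same event as $\cO_a$, since $T$ is an outset iff $[n]\setminus T$ is an inset). First I would write, by inclusion--exclusion or simply Bonferroni,
\[
\P(\cI_s)+\P(\cO_a)-\P(\cI_s\cap\cO_a)\le\P(s\notto a)\le\P(\cI_s)+\P(\cO_a)+\sum_{S}\P(\cI_S),
\]
where the last sum runs over sets $S$ with $s\in S$, $a\notin S$ that are \emph{neither} $\set s$ \emph{nor} $[n]\setminus\set a$. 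The main obstacle is bounding this error sum and showing it is $O\bigpar{n(1-p/2)^{2n}}$; everything else is bookkeeping.

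For the error term I would split according to the size $k=|S|$, with $1\le k\le n-1$ and $S\ne\set s,[n]\setminus\set a$. For a fixed $S$ with $|S|=k$, the event $\cI_S$ says that the $k(n-k)$ edges between $S$ and its complement, if present, are all oriented out of $S$; by independence over edges this has probability exactly $q'(k(n-k);n,p)=(1-p/2)^{k(n-k)}$ (recall $q'(\ell;n,p)=(1-p/2)^\ell$ from Lemma~\ref{L:Gnm}). There are $\binom{n-2}{k-1}$ choices of such $S$, so
\[
\sum_{S}\P(\cI_S)=\sum_{k=1}^{n-1}\binom{n-2}{k-1}(1-p/2)^{k(n-k)}-(1-p/2)^{n-1}-(1-p/2)^{n-1},
\]
the two subtracted terms removing $S=\set s$ and $S=[n]\setminus\set a$, each contributing $(1-p/2)^{n-1}$. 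The exponent $k(n-k)$ is minimized over the remaining range at $k=2$ and $k=n-2$, where it equals $2(n-2)=2n-4$, strictly larger than $2n-3$ asymptotically by a bounded factor; more precisely the $k=2$ term is $\binom{n-2}{1}(1-p/2)^{2(n-2)}=O\bigpar{n(1-p/2)^{2n}}$, and similarly for $k=n-2$. For the bulk $3\le k\le n-3$ one has $k(n-k)\ge 3(n-3)$, and a crude bound such as $\binom{n-2}{k-1}\le 2^{n}$ together with $(1-p/2)^{3(n-3)}$ (which beats $2^{-cn}$ since $1-p/2<1$ is fixed... here one must be slightly careful: $2^n(1-p/2)^{3n}$ need not be small for $p$ close to $0$) — so instead I would keep the binomial factor and use a standard convexity/unimodality estimate: writing $\binom{n-2}{k-1}(1-p/2)^{k(n-k)}$, compare with the endpoint and note the ratio of consecutive terms forces geometric decay away from $k=2$ and $k=n-2$, so the whole sum over $2\le k\le n-2$ is $O\bigpar{n(1-p/2)^{2n-4}}=O\bigpar{n(1-p/2)^{2n}}$ up to the constant $(1-p/2)^{-4}$ absorbed into the $O$. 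The same bound handles $\P(\cI_s\cap\cO_a)$, since that event is contained in $\cI_s$ and forces, in addition, all edges at $a$ (other than $sa$) to point into $a$; a short computation gives $\P(\cI_s\cap\cO_a)=O\bigpar{(1-p/2)^{2n}}$, which is even smaller.

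This establishes the first displayed equality. For the second, observe that $\cI_s$ is exactly the event that all $n-1$ edges incident to $s$, if present, point into $s$, so $\P(\cI_s)=q'(n-1;n,p)=(1-p/2)^{n-1}$; by the symmetry $s\leftrightarrow a$ (more precisely, $\cO_a$ is the event that all $n-1$ edges at $a$, if present, point out of $a$) we get $\P(\cO_a)=(1-p/2)^{n-1}$ as well, so $\P(\cI_s)+\P(\cO_a)=2q'(n-1)$. The final asymptotic $2q'(n-1)=2(1-p/2)^{n-1}$ is immediate, and the error $O\bigpar{n(1-p/2)^{2n}}=O\bigpar{n(1-p/2)^{n+1}}\cdot(1-p/2)^{n-1}=o\bigpar{(1-p/2)^{n-1}}$ since $n(1-p/2)^{n+1}\to 0$ for fixed $p\in\hoi$, giving $\P(s\notto a)\sim 2(1-p/2)^{n-1}$.
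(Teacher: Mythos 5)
Your proposal is correct and follows essentially the same route as the paper: decompose $\{s\notto a\}$ via Lemma~\ref{L:B1} into insets $S$ with $s\in S$, $a\notin S$, identify the dominant terms $\cI_s$ and $\cO_a$ (each of probability $q'(n-1)=(1-p/2)^{n-1}$), bound the contribution of intermediate sizes $2\le k\le n-2$ by $O\bigpar{n(1-p/2)^{2n}}$, and subtract $\P(\cI_s\cap\cO_a)=(1-p/2)^{2n-3}$. The only difference is bookkeeping for the middle of the sum --- the paper splits into fixed small $k$ and a range $K\le k\le n-K$ with $K$ chosen so that $2^nq'(K(n-K))=O((1-p/2)^{2n})$, while you compare consecutive terms of $\binom{n-2}{k-1}(1-p/2)^{k(n-k)}$ --- and you correctly spot the pitfall in the naive $2^n(1-p/2)^{3n}$ bound (your one directional slip, ``point into $a$'' for $\cO_a$, is immaterial to the count).
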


 The proof of \refL{L:saGnp} is very similar to the proof of \refL{L:saGnm} below, but with $q(\ell;n,m)$ replaced by $q'(\ell;n,p)$.

\begin{lemma}\label{L:saGnm}
  Suppose that $m,n\to\infty$, let
$p=p(n)\=m/\nn$, and assume $\liminf p(n)>0$. Then, for any
distinct  $s,a\in \ggnm$,
  \begin{align*}
	\P(s\notto a)
&
=\P(\cI_s)+\P(\cO_a)+O\bigpar{n(1-p/2)^{2n}}
\\&
=2q(n-1)+O\bigpar{n(1-p/2)^{2n}}
\\&
\sim2(1-p/2)^{n-1}\expQ{-\frac{p(1-p)}{(2-p)^2}}.
  \end{align*}
\end{lemma}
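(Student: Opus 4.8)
The plan is to sandwich $\P(s\notto a)$ between $\P(\cI_s\cup\cO_a)$ from below and $\P(\cI_s)+\P(\cO_a)$ plus a small error from above, using \refL{L:B1} to replace the event $\set{s\notto a}$ by the existence of an inset containing $s$ but not $a$, and \refL{L:Gnm} to control the probability of each candidate inset. Throughout write $\rho\=1-p/2$; since $\liminf p(n)>0$ there is a constant $c>0$ with $p(n)\ge c$, hence $\tfrac12\le\rho\le1-\tfrac c2<1$, for all large $n$, and $\rho\le1-\tfrac c2$ is the uniform gap below $1$ that the argument will lean on. First I would record the two ``obvious'' insets. The event $\cI_s$ ($\set{s}$ is an inset: no edge points out of $s$) and the event $\cO_a$ ($\set{a}$ is an outset: no edge points into $a$) each imply $s\notto a$ by \refL{L:B1}, and each is the event that a prescribed family of $n-1$ directed edges is absent from $\ggnm$, so $\P(\cI_s)=\P(\cO_a)=q(n-1)$; while $\cI_s\cap\cO_a$ is the event that the $2n-3$ directed edges $\set{s\to v:v\ne s}\cup\set{v\to a:v\ne a}$ are all absent, so $\P(\cI_s\cap\cO_a)=q(2n-3)\le q'(2n-3)=\rho^{2n-3}=O(n\rho^{2n})$ (using $\rho\ge\tfrac12$ to absorb $\rho^{-3}$). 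Since $\cI_s\cup\cO_a\subseteq\set{s\notto a}$, inclusion--exclusion gives
\[
\P(s\notto a)\ \ge\ \P(\cI_s\cup\cO_a)\ =\ 2q(n-1)-q(2n-3)\ =\ 2q(n-1)+O(n\rho^{2n}).
\]

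For the matching upper bound I would argue that on $\set{s\notto a}\setminus(\cI_s\cup\cO_a)$ the inset $S$ supplied by \refL{L:B1} (with $s\in S$, $a\notin S$) is neither $\set{s}$ (which is the event $\cI_s$) nor $V\setminus\set{a}$ (which is the event $\cO_a$), so $2\le|S|\le n-2$; hence this event is contained in $\bigcup\cI_S$ over all insets $S$ with $s\in S$, $a\notin S$, $2\le|S|\le n-2$. A union bound, using $\P(\cI_S)=q\bigpar{|S|(n-|S|)}\le\rho^{|S|(n-|S|)}$ from \refL{L:Gnm} (the number of edges of $K_n$ between $S$ and its complement being $|S|(n-|S|)$) and the fact that there are $\binom{n-2}{k-1}$ admissible sets of size $k$, gives
\[
\P\bigpar{\set{s\notto a}\setminus(\cI_s\cup\cO_a)}\ \le\ \sum_{k=2}^{n-2}\binom{n-2}{k-1}\rho^{k(n-k)}.
\]

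Writing $f(k)\=\binom{n-2}{k-1}\rho^{k(n-k)}$, the remaining task --- and the step I expect to be the main obstacle --- is to show $\sum_{k=2}^{n-2}f(k)=O(n\rho^{2n})$. The naive estimate $\sum_k\binom{n-2}{k-1}\le2^n$ on the bulk of the range is useless once $p$ is a small positive constant (so $\rho$ is close to $1$); one must use the genuine smallness of $\rho^{k(n-k)}$ for $k$ away from the endpoints together with the gap $\rho\le1-\tfrac c2$. Concretely, I would note the symmetry $f(n-k)=f(k)$ (since $\binom{n-2}{n-k-1}=\binom{n-2}{k-1}$), isolate the endpoint terms $f(2)=f(n-2)=(n-2)\rho^{2n-4}=O(n\rho^{2n})$, observe that for each fixed $3\le k\le k_0$ (with $k_0$ a large constant) the bound $k(n-k)\ge kn-k_0^2$ gives $f(k)=O\bigpar{n\rho^{2n}\,(n\rho^n)^{k-2}}=o(n\rho^{2n})$ because $n\rho^n\to0$, and finally bound the bulk $k_0\le k\le n-k_0$ by $2^n\rho^{k_0(n-k_0)}\le2^n\rho^{k_0n/2}=(2\rho^{k_0/2})^n=o(\rho^{2n})$, where $k_0$ is chosen (as a function of $c$ only) so large that $2\rho^{k_0/2}<\rho^2$ --- possible precisely because $\rho$ is bounded away from $1$. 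Together these give $\P(s\notto a)\le\P(\cI_s)+\P(\cO_a)+O(n\rho^{2n})$.

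Combining the two bounds, $\P(s\notto a)=\P(\cI_s)+\P(\cO_a)+O\bigpar{n(1-p/2)^{2n}}=2q(n-1)+O\bigpar{n(1-p/2)^{2n}}$. To finish I would apply \refL{L:Gnm} with $\ell=n-1$: since $p(1-p)/(2-p)^2\le\tfrac14$ is bounded, $q(n-1)\sim\rho^{n-1}\expbig{-\xpar{\tfrac{n-1}{n}}^2\tfrac{p(1-p)}{(2-p)^2}}\sim(1-p/2)^{n-1}\expbig{-\tfrac{p(1-p)}{(2-p)^2}}$; and since $\rho$ is bounded away from $1$ we have $q(n-1)\asymp\rho^{n-1}$, so the error satisfies $n\rho^{2n}=o(\rho^n)=o\bigpar{q(n-1)}$ and is negligible. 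Hence $\P(s\notto a)\sim2q(n-1)\sim2(1-p/2)^{n-1}\expbig{-\tfrac{p(1-p)}{(2-p)^2}}$, which is the asserted equivalence; the two displayed equalities in the statement follow from $\P(\cI_s)=\P(\cO_a)=q(n-1)$.
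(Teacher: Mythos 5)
Your proposal is correct and follows essentially the same route as the paper: decompose $\set{s\notto a}$ via \refL{L:B1} into insets of size $k$ containing $s$ but not $a$, identify $k=1$ and $k=n-1$ with $\cI_s$ and $\cO_a$, bound each fixed small $k$ by $\binom{n-2}{k-1}q(k(n-k))=O(n^{k-1}(1-p/2)^{kn})$, and kill the bulk $k_0\le k\le n-k_0$ by $2^nq(k_0(n-k_0))$ with $k_0$ chosen using $\liminf p>0$. The only differences are cosmetic bookkeeping (explicit inclusion--exclusion for the lower bound and the symmetry $k\leftrightarrow n-k$, both of which the paper also uses implicitly).
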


\begin{proof}
Let $\cE_k$ be the event that there exists an inset $S$ with $s\in S$,
$a\notin S$ and $|S|=k$. Then, by \refL{L:B1},
\begin{equation}
  \label{b3}
\set{s\notto a} =\bigcup_{k=1}^{n-1}\cE_k
=\cE_1\cup\cE_{n-1}\cup\bigcup_{k=2}^{n-2}\cE_k.
\end{equation}
Here $\cE_1=\cI_s$ and $\cE_{n-1}=\cO_a$.

If $|S|=k$, then $\P(\cI_S)=\P(\cO_S)=q(k(n-k))$.
In particular, by \refL{L:Gnm}, 
\begin{equation}
  \label{b3c}
\P(\cI_s)=\P(\cO_{a})
=q(n-1)\sim(1-p/2)^{n-1}\expQ{-\frac{p(1-p)}{(2-p)^2}}.
\end{equation}
Further, for any $k$, there are $\binom{n-2}{k-1}$ sets $S$ with $s\in
S$, $a\notin S$ and $|S|=k$, and thus 
\begin{equation}
\P(\cE_k)\le \binom{n-2}{k-1}q(k(n-k))
\le {n}^{k-1}q(k(n-k)).  
\end{equation}

For fixed $k$ we may apply \refL{L:Gnm} and obtain
\begin{equation}\label{b3a}
\P(\cE_k)=\P(\cE_{n-k})
=O\bigpar{{n}^{k-1}(1-p/2)^{k(n-k)}}
=O\bigpar{{n}^{k-1}(1-p/2)^{kn}}.
\end{equation}
For a fixed $K$, we use this estimate for $k<K$. For $K\le k\le n-K$
we have $k(n-k)\ge K(n-K)$ and thus $q(k(n-k))\le q(K(n-K))$, while
the total number of subsets $S$ of $[n]$ is $2^n$, and thus \refL{L:Gnm}
again yields
\begin{equation}
  \label{b3b}
\P\Bigpar{\bigcup_{k=K}^{n-K}\cE_k}
\le 2^n q(K(n-K))
=O\bigpar{2^n(1-p/2)^{Kn}}.
\end{equation}
By the assumption $\liminf p(n)>0$, we may assume that $p=p(n)\ge p_0$
for some $p_0>0$.
Choosing $K$ such that $(1-p_0/2)^{K-2}\le1/2$, we have
$2(1-p/2)^K\le 2(1-p_0/2)^{K-2}(1-p/2)^2\le(1-p/2)^2$, and thus
\eqref{b3a} and \eqref{b3b} imply
\begin{equation*}
 \P\Bigpar{\bigcup_{k=2}^{n-2}\cE_k} 
=\sum_{k=2}^{K-1} O\bigpar{n^{k-1}(1-p/2)^{kn}} + O\bigpar{(1-p/2)^{2n}}
=O\bigpar{n(1-p/2)^{2n}}.
\end{equation*}
Hence, \eqref{b3} yields
\begin{equation}
  \label{b4}
\P\set{s\notto a} 
=\P(\cE_1\cup\cE_{n-1})+O\bigpar{n(1-p/2)^{2n}}
=\P(\cI_s\cup\cO_a)+O\bigpar{n(1-p/2)^{2n}}.
\end{equation}
Finally, $\P(\cI_s\cap\cO_a)=q(2n-3)=O\bigpar{(1-p/2)^{2n}}$ by
\refL{L:Gnm} again, and thus
$
\P(\cI_s\cup\cO_a)
=\P(\cI_s)+\P(\cO_a)+O\bigpar{(1-p/2)^{2n}}$.
The result now follows by \eqref{b4}
and \eqref{b3c}.
\end{proof}

\begin{lemma}\label{L:sabGnp}
  Suppose that $p\in (0,1]$ 
  is fixed, then for any
distinct  $s,a,b\in \ggnp$,
  \begin{align*}
\P(s\notto a &\text{ and } b\notto s)
\\&
=\P(\cI_{s,b}\cap \cI_b)
+\P(\cO_{s,a}\cap \cO_a)
+\P (\cO_a \cap \cI_b)
+o\bigpar{(1-p/2)^{2n}}
\\&
=3(1-p/2)^{2n-3}+o\bigpar{(1-p/2)^{2n}}
  \end{align*}
\end{lemma}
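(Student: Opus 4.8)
The plan is to expand the event over the certificates furnished by \refL{L:B1}, isolate the three certificate‑pairs that contribute at the leading order, and control the rest.

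By \refL{L:B1}, $\set{s\notto a}\cap\set{b\notto s}$ is the union, over all insets $S$ with $s\in S$, $a\notin S$ and all outsets $T$ with $s\in T$, $b\notin T$, of the events $\cI_S\cap\cO_T$: on the event the sets $\set{v:s\to v}$ and $\set{v:v\to s}$ form such a pair, and conversely any such $S,T$ force $s\notto a$ and $b\notto s$. I would first compute $\P(\cI_S\cap\cO_T)$. Partition $V$ into $S\cap T$, $S\setminus T$, $T\setminus S$ and $V\setminus(S\cup T)$; then an edge with one endpoint in $S\cap T$ and the other in $V\setminus(S\cup T)$ is forbidden in both directions, hence absent (factor $1-p$), every other edge leaving $S$ or entering $T$ has exactly one forbidden orientation (factor $1-p/2$), and all remaining edges are unconstrained. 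With $j\=|S\setminus T|$, $\ell\=|T\setminus S|$, $w\=|S\cap T|+|V\setminus(S\cup T)|$ (so $j+\ell=n-w$, and $1\le w\le n$ because $s\in S\cap T$),
\[
\P(\cI_S\cap\cO_T)=(1-p)^{|S\cap T|\,|V\setminus(S\cup T)|}\,(1-p/2)^{w(n-w)+j\ell}\le(1-p/2)^{w(n-w)},
\]
and $w(n-w)\ge 2n-3$ unless $w\in\set{1,2}$ or $w\in\set{n-2,n-1,n}$, so only pairs at these extremes can reach the target order $(1-p/2)^{2n-3}$.

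Going through the extremes, the pairs for which the $(1-p/2)$‑exponent $w(n-w)+j\ell$ equals $2n-3$ while the $(1-p)$‑exponent $|S\cap T|\,|V\setminus(S\cup T)|$ stays bounded are exactly $(\set{s,b},V\setminus\set{b})$, which gives $\cI_S\cap\cO_T=\cI_{s,b}\cap\cI_b$; $(V\setminus\set{a},\set{s,a})$, which gives $\cO_{s,a}\cap\cO_a$; and $(V\setminus\set{a},V\setminus\set{b})$, which gives $\cO_a\cap\cI_b$ --- together with $(\set{s},V\setminus\set{b})$ and $(V\setminus\set{a},\set{s})$, whose events $\cI_s\cap\cI_b$ and $\cO_s\cap\cO_a$ are contained in the first two. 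Thus the union over this $O(1)$‑size family of ``dominant'' pairs equals $(\cI_{s,b}\cap\cI_b)\cup(\cO_{s,a}\cap\cO_a)\cup(\cO_a\cap\cI_b)$. For every other pair I would bound $\sum\P(\cI_S\cap\cO_T)$ by grouping by $w$, with a cut‑off at a large constant $K=K(p)$. For $K\le w\le n-K$ one has $\P(\cI_S\cap\cO_T)\le(1-p/2)^{K(n-K)}$ and there are at most $4^n$ pairs, so this part is at most $4^n(1-p/2)^{K(n-K)}=o\bigpar{(1-p/2)^{2n}}$ provided $K$ is chosen with $4(1-p/2)^{K-2}<1$. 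For each of the finitely many remaining $w$ there are only $O(n^{w})$ relevant pairs, and summing $\binom{n-w}{j}(1-p/2)^{j\ell}$ over $j$ is $O(1)$; using also the constraints $s\in S\cap T$, $a\notin S$, $b\notin T$ one checks that the non‑dominant pairs among these contribute $O\bigpar{n^{O(1)}(1-p/2)^{3n-O(1)}}=o\bigpar{(1-p/2)^{2n}}$. Since $\set{s\notto a}\cap\set{b\notto s}$ is the union of all the pairs' events, its probability therefore differs from that of $(\cI_{s,b}\cap\cI_b)\cup(\cO_{s,a}\cap\cO_a)\cup(\cO_a\cap\cI_b)$ by only $o\bigpar{(1-p/2)^{2n}}$.

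Finally I would put the three pieces together. Each of $\cI_{s,b}\cap\cI_b$, $\cO_{s,a}\cap\cO_a$, $\cO_a\cap\cI_b$ is an intersection of $2n-3$ independent one‑edge constraints of probability $1-p/2$ --- for $\cO_a\cap\cI_b$: the $n-2$ edges at $a$ other than $ab$, the $n-2$ at $b$ other than $ab$, and $ab$, on which the two constraints coincide; the other two cases give the same count (or follow by reversing all orientations together with the relabelling $a\leftrightarrow b$) --- so each has probability exactly $(1-p/2)^{2n-3}$. Any two of them force either all $3n-6$ edges meeting $\set{a,b,s}$ to be constrained or the $n-3$ edges from $s$ into $V\setminus\set{a,b,s}$ to be absent, so each pairwise intersection has probability $o\bigpar{(1-p/2)^{2n}}$, and the triple intersection is smaller still; by inclusion--exclusion the union of the three has probability $3(1-p/2)^{2n-3}+o\bigpar{(1-p/2)^{2n}}$, and combining with the previous step gives both displayed identities. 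The main obstacle is the estimate for the middle and boundary values of $w$: pushing the remainder down to $o\bigpar{(1-p/2)^{2n}}$, rather than the $O\bigpar{n(1-p/2)^{2n}}$ that \refL{L:saGnp} alone would give, relies on the fact that simultaneously blocking $\set{s\notto a}$ and $\set{b\notto s}$ near $s$ forces many of the edges at $s$ to disappear.
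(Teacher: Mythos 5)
Your argument is, in substance, the paper's: a union bound over the inset/outset certificates of \refL{L:B1}, identification of the three configurations that forbid exactly $2n-3$ directed edges (giving $\cI_{s,b}\cap\cI_b$, $\cO_{s,a}\cap\cO_a$, $\cO_a\cap\cI_b$, each of probability $(1-p/2)^{2n-3}$, with the two absorbed events $\cI_s\cap\cI_b$ and $\cO_s\cap\cO_a$ noted), and the gap $1-p\le(1-p/2)^2$, strict with an exponential margin, to kill every configuration that forces whole edges at $s$ to be absent. Your exact product formula for $\P(\cI_S\cap\cO_T)$ and the final inclusion--exclusion over the three events are correct. The organizational difference is that the paper first truncates each of the two unions separately to $|S|\in\{1,2,n-2,n-1\}$, so that only $O(n^2)$ pairs survive and each can be bounded individually, whereas you run a single union over all pairs $(S,T)$ stratified by $w=|S\cap T|+|V\setminus(S\cup T)|$.

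That reorganization creates the one step that does not work as written: $w$ does not control $|S|$ and $|T|$ individually, so for $w$ near $n$ (say $T\subset S$ with $|S\setminus T|=1$ and $|T|$ arbitrary) there are $\Theta(2^{w})$ relevant pairs, not $O(n^{w})$, and a (count)$\times$(max probability) bound is useless there since the per-pair bound $(1-p/2)^{w(n-w)}$ is only $(1-p/2)^{n-1}$ while the count is exponential. The fix is available from your own formula: sum over the split of the $w$ vertices between $S\cap T$ and $V\setminus(S\cup T)$ weighted by $(1-p)^{|S\cap T|\,|V\setminus(S\cup T)|}$, the exact analogue of your $\sum_j\binom{n-w}{j}(1-p/2)^{j\ell}$ computation (which as stated only handles the split of the other $n-w$ vertices). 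That weighted sum is $O\bigpar{1+w(1-p)^{w-1}}$, with the $O(1)$ coming precisely from the configurations you already isolate as dominant or exclude via $a\notin S$, $b\notin T$, so the remainder is $O\bigpar{n^{O(1)}(1-p)^{n-O(1)}(1-p/2)^{n-O(1)}}=o\bigpar{(1-p/2)^{2n}}$ as needed. This is exactly the mechanism you flag in your closing sentence; it just has to be applied to the $(S\cap T,\,V\setminus(S\cup T))$ split as well as to the $(S\setminus T,\,T\setminus S)$ split. With that repair the proof is complete and matches the paper's.
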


The proof of \refL{L:sabGnp} is almost
identical to that of \refL{L:sabGnm} below.

\begin{lemma}\label{L:sabGnm}
  Suppose that $m,n\to\infty$, let
$p=p(n)\=m/\nn$, and assume $\liminf p(n)>0$. Then, for any
distinct  $s,a,b\in [n]$,
  \begin{align*}
\P(s\notto a &\text{ and } b\notto s)
\\&
=\P(\cI_{s,b}\cap \cI_b)
+\P(\cO_{s,a}\cap \cO_a)
+\P (\cO_a \cap \cI_b)
+o\bigpar{(1-p/2)^{2n}}
\\&
=3q(2n-3)+o\bigpar{(1-p/2)^{2n}}
\\&
\sim3(1-p/2)^{2n-3}\expQ{-4\frac{p(1-p)}{(2-p)^2}}.
  \end{align*}
\end{lemma}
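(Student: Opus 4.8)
The plan is to mimic the proof of \refL{L:saGnm}, but now tracking pairs of events: since $s\notto a$ means there is an inset $S\ni s$, $a\notin S$, and $b\notto s$ means there is an inset $T\ni b$, $s\notin T$ (equivalently an outset $V\setminus T\ni s$, $b\notin V\setminus T$), we first write $\set{s\notto a\text{ and }b\notto s}=\bigcup_{S,T}(\cI_S\cap\cI_T)$ over all admissible pairs $(S,T)$ with $s\in S$, $a\notin S$, $b\in T$, $s\notin T$. The key observation is that $\P(\cI_S\cap\cI_T)=q(e(S)+e(T)-e(S\cap T'))$ where the argument counts the distinct (unoriented) edges that are forced to have a prescribed orientation; more simply, $\cI_S\cap\cI_T$ forbids a specific set of directed edges, and its probability is $q$ of the number of such edges, which is at least $|\partial S|+|\partial T|$ minus the overlap. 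Exactly as in \refL{L:saGnm}, all terms where $S$ or $T$ is not of size $1$ or $n-1$ contribute only $O\bigpar{n(1-p/2)^{2n}}$ (via the crude bounds $\binom{n-2}{k-1}\le n^{k-1}$ and $2^n q(K(n-K))$, together with the choice of a large constant $K$ using $\liminf p>0$), so the sum collapses to the four corner combinations $S\in\set{\set{s},[n]\setminus\set{a}}$, $T\in\set{\set{b},[n]\setminus\set{s}}$.

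Next I would identify those four corner terms. Writing them out: $(S,T)=(\set s,\set b)$ gives $\cI_s\cap\cI_b$; $(S,T)=(\set s,[n]\setminus\set s)$ gives $\cI_s\cap\cO_s$, which forbids \emph{all} edges at $s$ and has probability $q(2n-2)=O\bigpar{(1-p/2)^{2n-2}}=O\bigpar{(1-p/2)^{2n}}\cdot(1-p/2)^{-2}$ — wait, more carefully this is $o\bigpar{(1-p/2)^{2n}}$ only if $q(2n-2)$ beats $(1-p/2)^{2n}$, which it does not directly; but note $\cI_s\cap\cO_s\subseteq\cI_s\cap\cO_a\cap(\text{edge }sa\text{ forbidden both ways})$, so in fact this term is absorbed: the correct bookkeeping is that $\cI_s\cap\cO_s$ requires no edge incident to $s$ at all, probability $q(2n-2)=(1-p/2)^{2n-2}\expbig{-O(1)}$, which is $\Theta\bigpar{(1-p/2)^{2n}}$ up to a constant and must be compared against the main terms of order $(1-p/2)^{2n-3}$; since $(1-p/2)^{2n-2}=(1-p/2)\cdot(1-p/2)^{2n-3}$ this is the \emph{same} order, not negligible — so I must keep it. Reconsidering: the three surviving main-order terms should be exactly $\cI_s\cap\cI_b=\cI_{s,b}\cap\cI_b$ (rewriting, since $\cI_s\cap\cI_b=\cI_{\set s}\cap\cI_{\set b}$ forbids the two out-edges-... actually $\cI_{s,b}$ means $\set{s,b}$ is an inset; here I should match the paper's grouping), $\cO_a\cap\cO_s = \cO_{s,a}\cap\cO_a$, and $\cO_a\cap\cI_b$. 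The term $\cI_s\cap\cO_s$ (all edges at $s$ forbidden) has strictly higher exponent and is $o\bigpar{(1-p/2)^{2n}}$: indeed $2n-2>2n-3$... no, $(1-p/2)^{2n-2}<(1-p/2)^{2n-3}$, so it IS lower order. Good — so $\cI_s\cap\cO_s$ is negligible relative to $(1-p/2)^{2n-3}$, and drops out. That leaves the three listed terms.

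Then I would compute each surviving term via \refL{L:Gnm}. The term $\P(\cO_a\cap\cI_b)$ forbids the $n-1$ in-edges at $a$ and the $n-1$ out-edges at $b$; if $a$ and $b$ are not adjacent these are $2n-2$ distinct directed edges, but the edge $ab$ (if present) could be oriented $b\to a$, which is forbidden by \emph{both}, so the count of forbidden directed edges is $2n-3$, giving $q(2n-3)\sim(1-p/2)^{2n-3}\expbig{-4p(1-p)/(2-p)^2}$ by \refL{L:Gnm} with $\ell=2n-3$, $\ell/n\to 2$. Similarly $\P(\cI_{s,b}\cap\cI_b)$: $\cI_b$ forbids the out-edges at $b$, and $\cI_{s,b}$ forbids directed edges leaving $\set{s,b}$; together these forbid $(n-2)+(n-2)+1=2n-3$ distinct directed edges (the two sets of $n-2$ boundary edges plus the edge $sb$ going $s\to b$, once), again $\sim(1-p/2)^{2n-3}\expbig{-4p(1-p)/(2-p)^2}$; and $\P(\cO_{s,a}\cap\cO_a)$ is symmetric. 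Summing the three equal main terms yields $3q(2n-3)\sim 3(1-p/2)^{2n-3}\expbig{-4p(1-p)/(2-p)^2}$, which is the claim. The main obstacle is the careful edge-counting — verifying that each corner pair of events forbids exactly $2n-3$ distinct \emph{directed} edges (the subtlety being the shared edge between the relevant vertices, which is double-counted if one is not careful, and the need that $s,a,b$ are distinct so the three special vertices and their incident edges interact cleanly) — and confirming that the union-over-all-$(S,T)$ error analysis truly gives $o\bigpar{(1-p/2)^{2n}}$ rather than merely $O\bigpar{(1-p/2)^{2n}}$, which requires noting that the worst non-corner term has at least $2n-2$ forbidden edges, hence is $O\bigpar{n\cdot(1-p/2)^{2n-2}\cdot n}=o\bigpar{(1-p/2)^{2n}}$ after the constant-$K$ truncation, or more precisely using that the pair-version boundary has size $\ge 2n-2$ strictly above the main order $2n-3$.
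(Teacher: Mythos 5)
There is a genuine gap in your reduction step. You collapse the union over witness pairs $(S,T)$ to the four ``corner'' combinations with $|S|,|T|\in\{1,n-1\}$, claiming the rest contributes $O\bigpar{n(1-p/2)^{2n}}$ ``exactly as in \refL{L:saGnm}''. This fails for two reasons. First, $O\bigpar{n(1-p/2)^{2n}}$ is not $o\bigpar{(1-p/2)^{2n}}$, and in fact a single size-$2$ inset witness already has probability $q(2(n-2))\asymp(1-p/2)^{2n-4}$, which is \emph{larger} than the main term $(1-p/2)^{2n-3}$; so the $k=2$ and $k=n-2$ layers cannot be discarded by the crude single-event bound --- they become negligible only after being intersected with the witnesses for the other event. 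This is precisely why the paper's proof keeps $\cE_2,\cE_{n-2},\cE'_2,\cE'_{n-2}$ in the approximation and only then expands the product, discarding the pairs $\cX_{S_1}\cap\cY_{S_2}$ with $|S_1\cup S_2|\ge3$, which forbid about $3n$ directed edges. Second, and more seriously, dropping the size-$2$ witnesses loses main-order mass: your corner term $\cI_s\cap\cI_b$ is a \emph{proper} subset of the paper's $\cI_{s,b}\cap\cI_b$, and the two probabilities differ by a constant factor, since $\cI_s\cap\cI_b$ forbids \emph{both} orientations of the edge $\{s,b\}$ and so has probability of order $(1-p)(1-p/2)^{2n-4}$, which is $(1-p)/(1-p/2)<1$ times $q(2n-3)$. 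The missing event --- $\{s,b\}$ and $\{b\}$ are insets but $\{s\}$ is not, i.e.\ the edge $s\to b$ is present --- contributes at main order and is exactly what the size-$2$ witness $\cI_{s,b}$ captures; likewise $\cO_a\cap\cO_s\subsetneq\cO_{s,a}\cap\cO_a$. Carried out as stated, your reduction would therefore produce the wrong constant in front of $(1-p/2)^{2n-3}$.

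A smaller error of the same kind: you assert $\P(\cI_s\cap\cO_s)=q(2n-2)$ and dismiss it because $(1-p/2)^{2n-2}<(1-p/2)^{2n-3}$. But $(1-p/2)^{2n-2}=(1-p/2)\cdot(1-p/2)^{2n-3}$ is the \emph{same} order up to a constant, so if that were the right probability the term would not be negligible and the constant $3$ would be wrong. It is negligible for a different reason: $\cI_s\cap\cO_s$ forbids both orientations of each of the $n-1$ edges at $s$, so those edges must be absent altogether, giving probability about $(1-p)^{n-1}$, and $(1-p)^{n-1}=o\bigpar{(1-p/2)^{2n}}$ because $(1-p/2)^2/(1-p)\ge1+p_0^2/4$. (The quantity $q(\ell)$ is defined only for $\ell$ \emph{distinct} edges each with one forbidden orientation, so it cannot be applied to this event.) The paper uses exactly this $(1-p)$-versus-$(1-p/2)^2$ comparison, via \refL{L0}, to dispose of $\cI_{s,x}\cap\cO_{s,z}$ and of the combinations involving both $a$ and $b$ in one witness; your write-up needs the same device.
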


\begin{proof}
As in \eqref{b3}, we have 
\begin{equation*}
\set{s\notto a} =\bigcup_{k=1}^{n-1}\cE_k \quad\text{with}\quad
\cE_k\=\bigcup_{S:s\in S,\, a\notin S,\, |S|=k}\cI_S.  
\end{equation*}
and similarly
\begin{equation*}
\set{b\notto s} =\bigcup_{k=1}^{n-1}\cE'_k \quad\text{with}\quad
\cE'_k\=\bigcup_{S:s\in S,\, b\notin S,\, |S|=k}\cO_S.  
\end{equation*}

First, by the argument in the proof of \refL{L:saGnm} (choosing $K$ large
enough),
\begin{equation*}
 \P\Bigpar{\bigcup_{k=3}^{n-3}\cE_k} 
 =\P\Bigpar{\bigcup_{k=3}^{n-3}\cE'_k} 
=O\bigpar{n^2(1-p/2)^{3n}}
=o\bigpar{(1-p/2)^{2n}}.
\end{equation*}
We may thus approximate the event
\set{s\notto a \text{ and } b\notto s} by
\begin{multline*}
 ( \cE_1\cup\cE_2\cup\cE_{n-2}\cup\cE_{n-1})
\cap 
 ( \cE'_1\cup\cE'_2\cup\cE'_{n-2}\cup\cE'_{n-1})
\\
=\lrpar{\bigcup_{x\neq a} \cI_{s,x}\cup\bigcup_{y\neq s} \cO_{a,y}}
\cap
\lrpar{\bigcup_{z\neq b} \cO_{s,z}\cup\bigcup_{w\neq s} \cI_{b,w}}
\end{multline*}
(taking notational advantage of $I_s=I_{s,s}$ and so on).
This can be expanded as the union of $O(n^2)$ events of the type
$\cX_{S_1}\cap \cY_{\cS_2}$, 
where $\cX$ and $\cY$ are $\cI$ or $\cO$,
and $1\le|S_1|,|S_2|\le2$.

If $|S_1\cup S_2|\ge3$, 
then the event $\cX_{S_1}\cap \cY_{\cS_2}$
forbids at least 
$|S_1\cup S_2|(n-|S_1\cup S_2|)\ge 3(n-4)$ directed edges, and 
\refL{L:Gnm} shows that
\begin{equation}\label{bb}
\P(\cX_{S_1}\cap \cY_{\cS_2})=O\bigpar{(1-p/2)^{3n}};
\end{equation}
thus all such
combinations have together a probability 
$O\bigpar{n^2(1-p/2)^{3n}}=\opnn$ and
may be ignored.

Furthermore, any event $\cI_{s,x}\cap\cO_{s,z}$ forbids at least $n-3$
edges (regardless of orientation) in $\gnm$, and thus, by \refL{L0},
$$\P(\cI_{s,x}\cap\cO_{s,z})\le (1-p)^{n-3}.
$$
Since $p\ge p_0>0$, we have 
\begin{equation*}
  \frac{(1-p/2)^2}{1-p}
=  \frac{1-p+p^2/4}{1-p}
\ge 1+p_0^2/4,
\end{equation*}
and thus 
$n^2(1-p)^{n-3}
=o\bigpar{(1-p/2)^{2(n-3)}}
=\opnn$, so may also ignore all
$\cI_{s,x}\cap\cO_{s,z}$. We may similarly ignore
$\cO_a\cap \cI_{b,a}$,
$\cO_{a,b}\cap \cI_{b}$ and
$\cO_{a,b}\cap \cI_{b,a}$.

This leaves only the events 
$\cI_{s,b}\cap\cI_b$,
$\cI_{s}\cap\cI_b$,
$\cO_{a}\cap\cI_b$,
$\cO_{a}\cap\cO_s$,
$\cO_{a}\cap\cO_{s,a}$.
Of these, $\cI_{s}\cap\cI_b \subseteq \cI_{s,b}\cap\cI_b$
and $\cO_{a}\cap\cO_s \subseteq\cO_{a}\cap\cO_{s,a}$.
Summarizing, we have found
\begin{equation*}
  \P(s\notto a \text{ and } b\notto s)
=\P\bigpar{
(\cI_{s,b}\cap \cI_b)
\cup (\cO_{s,a}\cap \cO_a)
\cup (\cO_a \cap \cI_b)}
+o\bigpar{(1-p/2)^{2n}}.
\end{equation*}
The intersection of any two of the events
$\cI_{s,b}\cap \cI_b$, $\cO_{s,a}\cap \cO_a$ and $\cO_a \cap \cI_b$
is contained in an event
$\cX_{S_1}\cap \cY_{\cS_2}$
with $|S_1\cup S_2|=3$, so by \eqref{bb}, its probability is $\opnn$. 
Hence the first equality in the statement follows.

Moreover, each of these three events forbids exactly $2n-3$ directed edges,
and thus each has the probability $q(2n-3)$. The result follows by
\refL{L:Gnm}. 
\end{proof}

Note the obvious identity
$\P(s\notto a)=\P( b\notto s)$.

\begin{proof}[Proof of \refT{T:korrp}]
Follows easily from Lemmas \refand{L:saGnp}{L:sabGnp}.
\end{proof}
\begin{proof}[Proof of \refT{T1}]
An immediate consequence of Lemmas \refand{L:saGnm}{L:sabGnm}.
\end{proof}

\begin{proof}[Proof of \refT{T:Gnm}]
  Let $f(p)\=3 e^{-\frac{2p(1-p)}{(2-p)^2}}-4+2p$.
Note that $f(0)=-1$ and $f(1)=1$. We will show that $f'(p)>0$ for
$p\in\oi$; this implies the existence of a unique zero $\pc\in\oi$ of
$f$, with $f(p)<0$ for $0\le p<\pc$ and $f(p)>0$ for $\pc<p\le1$.
The result now follows from \refT{T1}. The numerical value
$\pc\approx0.799288221$ is found by \maple.

We use the partial fraction expansion
$-p(1-p)/(2-p)^2=1-3(2-p)\qw+2(2-p)\qww$ to find
\begin{equation*}
  f'(p)=6e^{-\frac{2p(1-p)}{(2-p)^2}}
\Bigpar{ -\frac3{(2-p)^2}+\frac4{(2-p)^3}}
+2.
\end{equation*}
The factor $-\xfrac3{(2-p)^2}+\xfrac4{(2-p)^3}$ is increasing on
$\oi$, as shown by another differentiation, and is thus at least
$-3/4+4/8=-1/4$. Since $0<e^{-\frac{2p(1-p)}{(2-p)^2}}\le1$, it
follows that $f'(p)\ge -6/4+2=1/2>0$ for $p\in\oi$.
\end{proof}

\begin{remark} \label{R:expfast} 
In the proofs of Lemmas \refand{L:saGnp}{L:sabGnp}, all error terms are
exponentially small compared to the main terms. Consequently, 
$\Cov(s\notto a ,\,b\notto s)/(1- p/2)^{2n-3}$
approaches its limit $2p-1$ exponentially fast for any fixed $p>0$
(this further holds uniformly for $p\ge p_0$ for any $p_0>0$),
and similarly,  
the relative covariance 
approaches its limit in \refT{T:korrp} exponentially fast for any fixed $p>0$.

Note, however, that \eqref{korrp} is false in the trivial case $p=0$, which
shows that the convergence cannot be uniform for all $p>0$.
\end{remark}
\begin{problem}
It would be interesting to know the rate of convergence in $G(n,m)$.  
\end{problem}

\section{Exact recursions in $\ggnp$} \label{S:Rec}
In this section we will give exact recursions to compute 
\[ \fpn:=\P_{\ggnp}(s\notto b) \text{ and }
\gpn:=\P_{\ggnp}(a\notto s, s\notto b).\] 

For a vertex $v\in V(G)$, let $\hpil{C}_v\subseteq V(G)$ be the (random) set
of all vertices $u$ for which there is a directed path from $v$ to $u$. We
will call this the {\bf out-cluster} from $v$. Let also the {\bf
  in-cluster}, $\vpil{C}_v\subseteq V(G)$ be the (random) set of all vertices
$u$ for which there is a directed path from $u$ to $v$. Note that we will
use the convention that $v\in \vpil{C}_v\cap\hpil{C}_v$. 
Let $y:=1-p/2$ 
be the probability that an edge does not exist with
a certain direction, and let $q:=1-p$ be
the probability that there is no edge
at all.

For $n\ge 1$, 
$s\in {\X}\subseteq [n]$ and $|{\X}|=k$ define:
\[ d_p(n,k):=\P_{\ggnp}(\hpil{C}_s={\X}),\]
where in particular $d_p(1,1)=1$.

\begin{lemma} \label{L:f}  We have the following recursions
\begin{romenumerate}
\item\label{lf1} 
$\displaystyle d_p(n,k)=d_p(k,k)y^{k(n-k)}, \quad \text{for $n>k\ge 1$}$,

\item\label{lf2}  
$\displaystyle 
d_p(k,k)=1-\sum_{i=1}^{k-1}\binom{k-1}{i-1}d_p(i,i)y^{i(k-i)},$ and

\item\label{lf3} 
$\displaystyle 
\fpn=\sum_{k=1}^{n-1}\binom{n-2}{k-1}d_p(k,k)y^{k(n-k)}$.
\end{romenumerate}
\end{lemma}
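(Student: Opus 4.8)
The plan is to prove the three recursions in \refL{L:f} by direct combinatorial conditioning on the out-cluster $\hpil{C}_s$, exploiting the fact that whether a vertex lies in the out-cluster depends only on the edges (and their orientations) among the already-reachable vertices together with the edges leaving the current reachable set.

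\textbf{Part \eqref{lf1}.} Fix $\X$ with $s\in\X$, $|\X|=k$, sitting inside $[n]$ with $n>k$. The event $\{\hpil{C}_s=\X\}$ says two things: (a) the directed graph restricted to $\X$ already has out-cluster exactly $\X$ from $s$, and (b) no directed edge points from $\X$ to $[n]\setminus\X$, for otherwise the out-cluster would be strictly larger. Condition (a) is an event depending only on the $\binom k2$ potential edges inside $\X$; condition (b) depends only on the $k(n-k)$ potential edges between $\X$ and its complement, and each such edge fails to point out of $\X$ with probability $y=1-p/2$ (either the edge is absent, or it is present but oriented into $\X$), independently. These two edge sets are disjoint, so the two events are independent, giving $d_p(n,k)=d_p(k,k)\,y^{k(n-k)}$. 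The only subtlety worth a sentence: once (b) holds, the out-cluster computed inside the full graph coincides with the one computed inside $\X$, so $\{\hpil{C}_s=\X\}$ in $G(n,p)$ genuinely factors as (a) $\cap$ (b); this is the heart of the argument and I would state it explicitly.

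\textbf{Part \eqref{lf2}.} Here $n=k$, so there is no "outside". The events $\{\hpil{C}_s=\X\}$ over all $\X\ni s$ with $\X\subseteq[k]$ partition the whole probability space, so $\sum_{\X\ni s}\P(\hpil{C}_s=\X)=1$. Group the subsets $\X$ by their size $i=|\X|$: there are $\binom{k-1}{i-1}$ choices of such $\X$ (choose the other $i-1$ vertices from the $k-1$ vertices $\ne s$), and by symmetry each contributes $d_p(k,i)$, which by \eqref{lf1} equals $d_p(i,i)y^{i(k-i)}$ when $i<k$ and equals $d_p(k,k)$ when $i=k$. Solving $1=d_p(k,k)+\sum_{i=1}^{k-1}\binom{k-1}{i-1}d_p(i,i)y^{i(k-i)}$ for $d_p(k,k)$ yields the stated formula; one should note the base case $d_p(1,1)=1$ is consistent (empty sum).

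\textbf{Part \eqref{lf3}.} By \refL{L:B1} (or directly), $\{s\notto b\}=\{b\notin\hpil{C}_s\}$, so $\fpn=\sum_{\X:\,s\in\X,\,b\notin\X}\P(\hpil{C}_s=\X)$. Again group by $k=|\X|$: the number of $\X\subseteq[n]$ with $s\in\X$, $b\notin\X$, $|\X|=k$ is $\binom{n-2}{k-1}$ (pick the remaining $k-1$ members of $\X$ from the $n-2$ vertices other than $s,b$), and each contributes $d_p(n,k)=d_p(k,k)y^{k(n-k)}$ by \eqref{lf1}. Note the range is $1\le k\le n-1$ since $\X$ must omit $b$. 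Summing gives the claimed identity. I do not anticipate a genuine obstacle here; the one point requiring care throughout is the independence claim in \eqref{lf1} — making precise that the "interior" event and the "boundary cut" event depend on disjoint sets of edge-slots — and verifying that parts \eqref{lf2}--\eqref{lf3} really do just reorganize a partition of the sample space.
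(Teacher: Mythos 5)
Your proposal is correct and follows essentially the same argument as the paper: part \eqref{lf1} rests on the same independence of the ``interior'' edges of $\X$ and the $k(n-k)$ boundary edge-slots (the paper just phrases it as peeling off one outside vertex at a time and inducting, each contributing $y^k$, whereas you factor all $k(n-k)$ slots at once), and parts \eqref{lf2}--\eqref{lf3} are the identical partition-of-the-sample-space computations. Your explicit remark that $\{\hpil{C}_s=\X\}$ genuinely factors as the interior event intersected with the boundary-cut event is the right point to make precise, and is exactly what the paper's induction implicitly uses.
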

\begin{proof}
If $n>k$ there is a vertex $w\notin {\X}$. The only restriction on $w$ is
that it must not have any edge directed from ${\X}$ so
$\P_{\ggnp}(\hpil{C}_s={\X})=\P_{G(n-1,p)}(\hpil{C}_s={\X})\cdot y^{k}$ and
\ref{lf1} follows by induction. Clearly $\sum_{\X:s\in {\X}\subseteq [n]}
\P_{\ggnp}(\hpil{C}_s={\X})=1$, which gives formula \ref{lf2} after using
equation \ref{lf1}. To get the third equation we sum over all sets ${\X}$
that contain $s$ 
but not $b$, i.e. $\fpn=\sum_{{\X}}
\P_{\ggnp}(\hpil{C}_s={\X})=\sum_{k=1}^{n-1}\binom{n-2}{k-1}d_p(n,k)$, which
using \ref{lf1} gives \ref{lf3}. 
\end{proof}
Note that, by symmetry, also $\P_{\ggnp}(\vpil{C}_s={\X})=d_p(n,k)$.
\medskip

\begin{remark}
Note that by Lemma 2.1 in \cite{SL2}, which is a special case of a theorem by McDiarmid, \cite{CM}, 
our recursion for $d_p(k,n)$ also gives a formula for the probability that a
given set of vertices ${\X}$ 
with $|{\X}|=k$ is the connected component (or open cluster) containing $s$ in $G(n,p/2)$.
\end{remark}

We now want to do something similar for the more complicated case of $\gpn$.
For $n\ge 1$,
$ s\in {\X}\subseteq [n]$, $ s\in {\Y}\subseteq [n]$ with $|{\X}|=k,
|{\Y}|=m$ and  
$|[n]\setminus ({\Y}\cup {\X})|=r$ define:
\[ M_p(n,k,m,r):=\P_{\ggnp}(\hpil{C}_s={\X}, \vpil{C}_s={\Y}),\]
where in particular $M_p(1,1,1,0)=1$.

\begin{lemma} \label{L:g} 
We have the following recursions for $M_p$,
where $k+m>n-r\ge k,m$ and $k,m \ge 1$
\begin{romenumerate}
\item\label{lg1} 
$\displaystyle 
M_p(n,k,m,r)=M_p(n-r,k,m,0)q^{r(r+k+m-n)}y^{r(2n-2r-k-m)}, \quad \text{for $r>0$}$,
\item\label{lg2} 
$\displaystyle 
M_p(n,k,m,0)=\sum_{j=1}^{n-k}\binom{n-k-1}{j-1}M_p(n-j,k,m-j,0)d_p(j,j)y^{j(n-m)}\cdot$\\
$\displaystyle 
\big(y^{m+k-n}-y^{n-k-j}q^{m+k-n}\big) q^{(j-1)(m+k-n)}y^{(j-1)(n-k-j)},\quad \text{for $n>k, n\ge m$}$,
\item\label{lg3} 
 $\displaystyle M_p(n,k,m,r)=M_p(n,m,k,r)$,
\item\label{lg4} 
  $\displaystyle 
M_p(n,n,n,0)=$\\
 $\displaystyle {}\qquad
1-\sum_{j=1}^{n-1}\binom{n-1}{j-1}\sum_{k=j}^{n}\binom{n-j}{k-j}\sum_{m=j}^{n-k+j}\binom{n-k}{m-j}M_p(n,k,m,n-m-k+j),$ 
\item\label{lg5} 
 $\displaystyle
\gpn=\sum_{j=1}^{n-2}\sum_{k=j}^{n-1} \sum_{m=j}^{n-k+j} \binom{n-1}{j-1,\,m-j,\, k-j,\, n-k-m+j}\cdot$ \\
{\hbox to 5 em {}}  
$\displaystyle\frac{(n-k-1)(n-m-1)+n-j-1}{(n-1)(n-2)}M_p(n,k,m,n-m-k+j)$.
\end{romenumerate}
\end{lemma}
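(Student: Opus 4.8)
\emph{Plan.} The whole lemma concerns one event, $\set{\hpil{C}_s=\X,\ \vpil{C}_s=\Y}$, whose probability is $M_p(n,k,m,r)$, so I would first unpack that event into edge conditions and reuse the unpacking everywhere. Fix the value $\X$ of $\hpil{C}_s$ and $\Y$ of $\vpil{C}_s$ and put $A=\X\cap\Y$ (which contains $s$), $B=\X\setminus\Y$, $C=\Y\setminus\X$, $O=[n]\setminus(\X\cup\Y)$, of sizes $k+m-n+r$, $n-r-m$, $n-r-k$, $r$. By \refL{L:B1} the event equals: $\X$ is an inset and $\Y$ is an outset -- i.e.\ the orientations $A\to C$, $B\to A$, $B\to C$ are forbidden, all other orientations (including those inside $A,B,C,O$) being free -- together with the requirements that $s$ reach every vertex of $A\cup B$ and that every vertex of $A\cup C$ reach $s$. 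Since all probabilities below factorise over (almost) independent blocks of edges cut out by this description, parts~\ref{lg3} and~\ref{lg1} come out quickly: for~\ref{lg3} I would reverse every orientation, which is measure preserving and interchanges $\hpil{C}_s$ with $\vpil{C}_s$; for~\ref{lg1} I would delete the vertices of $O$ one at a time, noting that a deleted $w\in O$ must have no edge to $A$ in either orientation (probability $q$ per pair), no orientation into $w$ from $B$ and none out of $w$ to $C$ (probability $y$ per pair), and arbitrary edges to the other $O$-vertices, so peeling it off multiplies the probability by $q^{|A|}y^{|B|+|C|}$; the two exponents do not change along the iteration, which gives the stated formula after $r$ steps.

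The real work is part~\ref{lg2}, where $r=0$, $\X\cup\Y=[n]$ and $|C|=n-k\ge1$. I would fix a canonical vertex $c_0\in C$ (say the one of largest label) and introduce the ``blob through $c_0$'', $D=\set{v\in C:\ \text{every directed path from }v\text{ to }s\text{ passes through }c_0}$ (it contains $c_0$ and lies in $C$). This $D$ is a deterministic function of the orientation on the event, so the event decomposes as a disjoint union, over the possible values $D$ of size $j=|D|$ with $c_0\in D\subseteq C$. On the piece where the blob equals $D$ one checks that: every vertex of $D$ reaches $c_0$ using only edges inside $D$, the edges inside $D$ being otherwise free, which contributes $d_p(j,j)$ by the symmetry remark after \refL{L:f}; no vertex of $B$ sends an orientation into $D$, contributing $y^{j(n-m)}$; each vertex of $D\setminus\set{c_0}$ has no edge at all to $A$ and no orientation toward $C\setminus D$, contributing $q^{(j-1)(m+k-n)}$ and $y^{(j-1)(n-k-j)}$; and $c_0$ has no orientation from $A$, while $c_0$ must reach $s$, which under the previous constraints happens exactly when $c_0$ has a direct edge to $A$ or an orientation into $C\setminus D$, so that by inclusion--exclusion ($\P(\text{no }A\to c_0)-\P(\text{no }c_0\text{--}A\text{ edge and no }c_0\to C\setminus D)$) this contributes $y^{m+k-n}-y^{n-k-j}q^{m+k-n}$. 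Finally deleting $D$ leaves a configuration with out-cluster $\X$ (size $k$) and in-cluster $A\cup(C\setminus D)$ (size $m-j$), on a set of edges disjoint from all those above -- an $M_p(n-j,k,m-j,0)$-configuration. Summing the product of these factors over $j$ and over the $\binom{n-k-1}{j-1}$ choices of $D\setminus\set{c_0}$ yields~\ref{lg2}.

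Parts~\ref{lg4} and~\ref{lg5} are then bookkeeping. The events $\set{\hpil{C}_s=\X,\ \vpil{C}_s=\Y}$ over all $\X,\Y\ni s$ partition the probability space; grouping by $j=|\X\cap\Y|$, $k=|\X|$, $m=|\Y|$, there are $\binom{n-1}{j-1}\binom{n-j}{k-j}\binom{n-k}{m-j}$ such labelled pairs, each of probability $M_p(n,k,m,n-k-m+j)$, so the total is $1$; isolating the single term $j=k=m=n$, which is $M_p(n,n,n,0)$, gives~\ref{lg4}. For~\ref{lg5}, \refL{L:B1} gives $\set{a\notto s}\cap\set{s\notto b}=\set{a\notin\vpil{C}_s}\cap\set{b\notin\hpil{C}_s}$, so $\gpn=\sum M_p(n,|\X|,|\Y|,r)$ over $\X,\Y\ni s$ with $b\notin\X$ and $a\notin\Y$; grouping by $(j,k,m)$ as above, the four cells $\X\cap\Y\setminus\set{s},\X\setminus\Y,\Y\setminus\X,O$ receive $j-1,k-j,m-j,n-k-m+j$ of the remaining $n-1$ vertices (hence the multinomial coefficient), and the fraction of such placements with $b\notin\X$ and $a\notin\Y$ is, by casing on whether $b$ lands in $\Y\setminus\X$ or in $O$, equal to $\frac{(m-j)(n-m)+(n-k-m+j)(n-m-1)}{(n-1)(n-2)}=\frac{(n-k-1)(n-m-1)+n-j-1}{(n-1)(n-2)}$, the asserted coefficient.

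The hard part is the factorisation claimed in~\ref{lg2}: one must choose $D$ so that it is genuinely determined by the orientation on the event, verify that removing it returns precisely an $M_p(n-j,k,m-j,0)$-configuration on a disjoint edge set, and identify the conditional law of the edges meeting $D$ with the displayed product -- the inclusion--exclusion step for ``$c_0$ reaches $s$'' being the delicate piece. Everything else is routine, though one should keep an eye on the degenerate ranges ($j=n-k$, or $m+k=n$ so that $A=\set{s}$ only, or $m=n$, and so on), where the formulas still hold but some of the factors become empty.
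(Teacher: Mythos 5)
Your proposal is correct and follows essentially the same route as the paper: your ``blob'' $D$ through the distinguished vertex $c_0$ is exactly the paper's set $\Y\setminus\Y'$ (where $\Y'$ is the in-cluster of $s$ after deleting $w=c_0$), and all the edge-block factors, including the inclusion--exclusion term $y^{m+k-n}-y^{n-k-j}q^{m+k-n}$, match the paper's. The only (cosmetic) differences are that you peel off $O$ one vertex at a time in \ref{lg1} and compute the coefficient in \ref{lg5} by a direct placement count rather than the paper's split into the cases $a\in\X$ and $a\notin\X$; both give the same answer.
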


\begin{proof} Assume, as given for the first equation, that $r>0$. All
 vertices in $[n]\setminus ({\Y}\cup {\X})$ must not have any edge directed
 to ${\Y}$ or from ${\X}$. This means that there must be no edge at all to
 ${\Y}\cap {\X}$, which gives probability $q^{|[n]\setminus {\Y}\cup
 {\X}|\cdot |{\Y}\cap {\X}|}=q^{r(r+k+m-n)}$. There must not be any edge 
directed to $({\Y}\setminus {\X})$  and there must not be any edge directed from $({\X}\setminus {\Y})$. This gives a factor of $y^{|[n]\setminus {\Y}\cup {\X}|\cdot |({\Y}\setminus {\X})\cup ({\X}\setminus {\Y})|}=
 y^{r(2n-2r-k-m)}$.
 
 For equation \ref{lg2}, note that $n>k$ and $r=0$ imply
that there exist a vertex $w\in {\Y}\setminus {\X}$. Let $G$ be any directed
graph on $n$ vertices with $\hpil{C}_s={\X}$ and $\vpil{C}_s={\Y}$. If we
remove vertex $w$ and all its edges from $G$ the resulting graph will still
have $\hpil{C}_s={\X}$ since $w\notin {\X}$, whereas $\vpil{C}_s={\Y}'$, for
some $\Y '$ such that ${\Y}\cap {\X}\subseteq {\Y}'\subseteq {\Y}\setminus
\{w\}$. Let $j=|{\Y}\setminus {\Y}'|$ and sum over all possible ${\Y}'$.  
 The probability is $M_p(n-j,k,m-j,0)$ that the subgraph on $[n]\setminus
 ({\Y}\setminus {\Y}')$ is as needed. The subgraph on ${\Y}\setminus {\Y}'$
 must have $\vpil{C}_w={\Y}\setminus {\Y}'$ which gives probability
 $d_p(j,j)$.  
 There must not be any edge directed from ${\X}\setminus {\Y}$ to ${\Y}\setminus {\Y}'$, since the vertices of the latter do not belong to ${\X}$. The other direction is legal and this gives the factor $y^{(n-m)j}$. 
 There must by the definition of ${\Y}'$ not be any edge at all between ${\X}\cap {\Y}$ and ${\Y}\setminus ({\Y}'\cup \{w\})$, which gives the factor $q^{(k+m-n)(j-1)}$. 
 There must also not be any edge directed from  ${\Y}\setminus ({\Y}'\cup
 \{w\})$ to ${\Y}'\setminus ({\Y}\cap {\X})$, which gives the factor
 $y^{(n-k-j)(j-1)}$. The only possible edges left to consider have one
 endpoint in $w$ and the other in $\Y'$.
 The edges between ${\Y}'\setminus ({\Y}\cap {\X})$ and $w$ could have any direction and there must not be any edge directed from ${\Y}\cap {\X}$ to $w$ (since $w\notin {\X}$), but there must be at least one edge directed from $w$ to ${\Y}'$ (since $w\in {\Y}$). This gives probability $(y^{m+k-n}\cdot 1^{n-k-j}-q^{m+k-n}\cdot y^{n-k-j})$. Putting all this together gives formula \ref{lg2}.

The third equation is obtained from the symmetry of reversing all directions.

The fourth equation follows from the fact that 
\[\sum_{\X,\Y :s\in {\Y},{\X}\subseteq [n]} \P_{\ggnp}(\hpil{C}_s={\X},\vpil{C}_s={\Y})=1.\] 
Here $j=|{\Y}\cap {\X}|$ and recall that $s\in {\Y}\cap {\X}$ is a necessary condition.

The last equation is obtained by summing over all possible pairs ${\Y},{\X}$
such that $a\notin {\Y}, b\notin {\X}$. Again $j=|{\Y}\cap {\X}|$ and the
formula is split into the cases when $a\notin {\X}$ and $a\in {\X}$,
respectively. We get 
\begin{multline*}
  \gpn=\sum_{j=1}^{n-2}\binom{n-3}{j-1}\cdot \\
\qquad  \Biggl( \;
\sum_{k=j}^{n-2}\binom{n-2-j}{k-j}\sum_{m=j}^{n-k+j-1}\binom{n-k-1}{m-j}M_p(n,k,m,n-m-k+j)\\
+\sum_{k=j+1}^{n-1}\binom{n-2-j}{k-j-1}\sum_{m=j}^{n-k+j}\binom{n-k}{m-j}M_p(n,k,m,n-m-k+j)
\Biggr),
\end{multline*}

which after simplification gives the claimed formula.
\end{proof}

Note that in $\ggnp$ the functions $\P(a \notto s)$ and $\P(a \notto s, s
\notto b)$  
are polynomials in $p$ and hence continuous.

\section{Computations and Conjectures for $\ggnp$} \label{S:Comp}
We have used \maple\ to compute the functions $\fpn$ and $\gpn$ for
$n\le 30$. Figure \ref{F:Plot} displays the relative covariance
$(\gpn-\fpn^2)/\gpn$. All curves start with being mildly
negative. They then turn positive and for $n<27$ they stay positive. For
$n\ge 27$ however, they go below the $p$-axis again for some time.

\begin{figure}[htbp]
\begin{center}
\epsfig{file=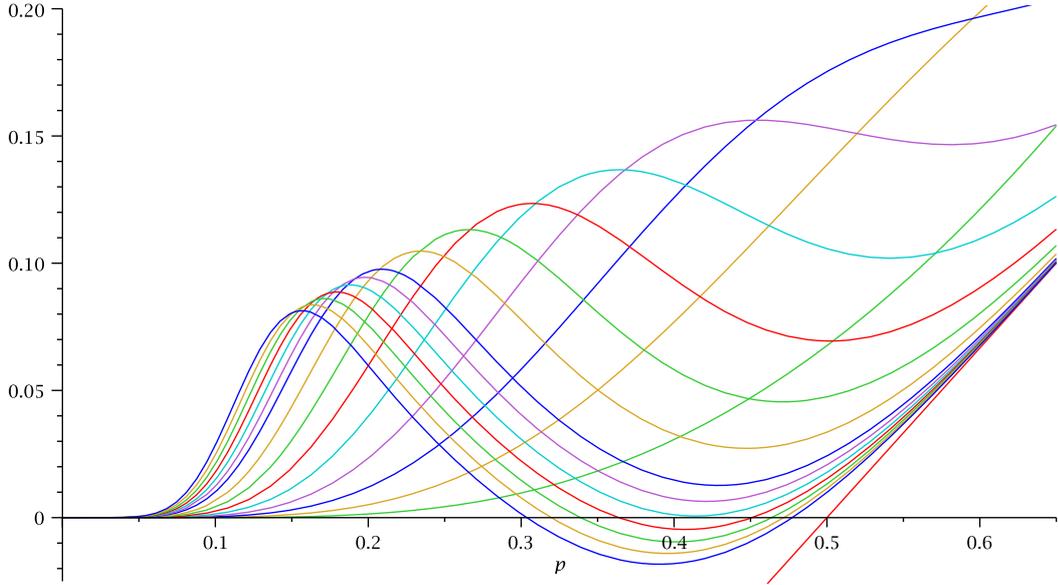, height=8cm}
\end{center}
\caption{The relative correlation $\frac{\P(a \notto s, s \notto b)-\P(a \notto s)\P(s \notto b)}{\P(a \notto s, s \notto b)}$ in $\ggnp$
for $n=8,10,12,14,16,18,20,22,24,25,26,27,28,29,30$, 
and the asymptote $(2p-1)/3$. 
All curves are negative for very small $p$. For $n\ge 27$ we get three zeros.} 
\label{F:Plot}
\end{figure}

For larger values of $n$ it becomes infeasible for our computers to obtain the exact functions. 
We have instead for various fixed values of $p$ used the recursions to obtain the value of the quotient $(\gpn-\fpn^2)/\gpn$ 
for $n\le 300$. Based on these calculations we conjecture the following.

\begin{conjecture}\label{C:3z}
For $n\ge 27$, the relative covariance changes sign at three critical probabilities $p_1(n)<p_2(n)<p_3(n)$.
\end{conjecture}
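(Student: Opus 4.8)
We sketch a strategy for proving Conjecture~\ref{C:3z}. Write $D_n(p)\=\gpn-\fpn^{2}$; since $\P(a\notto s)=\P(s\notto b)$, this equals $\Cov(s\notto a,\,b\notto s)$, a polynomial in $p$ whose sign is that of the relative covariance. The plan is to partition $\hoi$ into a macroscopic part $[p_0,1]$, a scaling window $p=\Theta(1/n)$, and the transitional range between them, and to show that $D_n$ has exactly one sign change in the macroscopic part (near $p=1/2$), exactly two in the window, and constant sign in between; combined with a Sturm-type root count for the finitely many values $27\le n<N_0$ not covered by the asymptotics, this would give exactly three sign changes in $(0,1)$.

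For the macroscopic part one would upgrade \refL{L:saGnp}, \refL{L:sabGnp} and \refR{R:expfast} to a $C^1$ statement. Those lemmas give $D_n(p)=(1-p/2)^{2n-3}\bigl(2p-1+R_n(p)\bigr)$ with $\sup_{[p_0,1]}|R_n|\le Ce^{-cn}$; since every error term there is a finite sum of monomials $p^{|E|}(1-p)^{\binom n2-|E|}2^{-|E|}$ (over oriented subgraphs of $K_n$ for which the relevant inset/outset event holds), its $p$-derivative is $O(n^2)$ times itself on $[p_0,1-p_0]$, and hence $R_n\to0$ in $C^1$ there. Then $\phi_n(p)\=D_n(p)/(1-p/2)^{2n-3}$ converges to $2p-1$ uniformly on $[p_0,1]$ and in $C^1$ near $1/2$, so for $n$ large $\phi_n$ is negative on $[p_0,\tfrac12-\delta]$, positive on $[\tfrac12+\delta,1]$ and strictly increasing on $[\tfrac12-\delta,\tfrac12+\delta]$; hence $D_n$ has exactly one zero in $[p_0,1]$.

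In the window, an inclusion--exclusion over edge sets of size at most three shows that the coefficients of $1,p,p^{2}$ in $D_n$ vanish while that of $p^{3}$ is $-\tfrac18$, so $D_n(p)=-\tfrac18p^{3}+O(p^{4})$ as $p\to0$ and $D_n<0$ on $(0,c_0/n]$ for small $c_0$. For $p=\gamma/n$ one would use the Galton--Watson description of the out-cluster $\hpil{C}_a$ and in-cluster $\vpil{C}_s$ in $\ggnp$ to prove a limit $n^{2}D_n(\gamma/n)\to\Phi(\gamma)$ for an explicit function $\Phi$ with $\Phi(\gamma)\sim c_4\gamma^{4}$ near $0$; the conjecture in this range then reduces to showing that $\Phi$ has exactly two positive zeros $\gamma_1<\gamma_2$, is negative on $(0,\gamma_1)\cup(\gamma_2,\infty)$ and positive on $(\gamma_1,\gamma_2)$. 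One also needs a uniform bound $D_n(p)<0$ for $c_0/n\le p\le p_0$, so that the window and macroscopic estimates dovetail with no extra crossings in between, and for $27\le n<N_0$ one computes the exact polynomials $\fpn,\gpn$ from the recursions of \refS{S:Rec} and applies a Sturm-sequence (or certified interval) root count to $D_n$ on $(0,1)$.

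The main obstacle is the window: proving that the limit $\Phi$ exists (so that the two low-$p$ crossings genuinely live at scale $\Theta(1/n)$ rather than being smeared over several scales, and are not, say, supercritical with $\gamma\ge2$), extracting a formula for $\Phi$ tractable enough to establish that it has exactly two zeros, and pairing this with the uniform negativity estimate on the transitional range. A secondary, more practical obstacle is to make the macroscopic $C^1$ bound effective enough that $N_0$ is small, since $\deg D_n$ grows like $n^2$ and the Sturm computation must be run for every $n$ below $N_0$.
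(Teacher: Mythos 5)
Note first that the statement you are addressing is posed in the paper as Conjecture~\ref{C:3z} and is \emph{not} proved there: the authors offer only numerical evidence (the exact polynomials for $n\le 30$ and pointwise evaluations of the recursions up to $n\approx 300$) and explicitly defer the question to future work. So there is no proof to compare against, and the only question is whether your proposal itself constitutes one. It does not; it is a reasonable research program whose central analytic content is missing, as you yourself concede. The parts outside the window $p=\Theta(1/n)$ are at least plausibly within reach (the $C^1$ upgrade of Lemmas \refand{L:saGnp}{L:sabGnp} on $[p_0,1]$, and a certified root count for the finitely many remaining $n$), but the two ``new'' zeros $p_1,p_2$ live precisely in the window, and there you assert rather than establish (i) the existence of a scaling limit $\Phi$, (ii) that $\Phi$ has exactly two positive zeros with the sign pattern negative/positive/negative, and (iii) a uniform negativity bound on the transitional range $C/n\le p\le p_0$ ruling out further crossings. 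None of (i)--(iii) follows from anything in the paper, and (ii) is essentially the conjecture itself restricted to small $p$.

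Moreover, the quantitative anchors of your window analysis are off, which suggests the sketch has not been checked against the expansion it relies on. A direct inclusion--exclusion at small $p$ gives $\P(a\to s)=p/2+(n-2)p^2/4+O(p^3)$ and $\P(a\to s,\,s\to b)=p^2/4+(n-3)p^3/4+O(p^4)$, so the coefficient of $p^3$ in $D_n$ is $(n-3)/4-(n-2)/4=-1/4$, not $-1/8$. More importantly, since this coefficient is a nonzero constant, at $p=\gamma/n$ the cubic term contributes $-\gamma^3/(4n^3)$; the natural normalization in the window is therefore $n^3D_n(\gamma/n)$ (assuming the $p^k$ coefficient grows like $n^{k-3}$), and any limit $\Phi$ must satisfy $\Phi(\gamma)\sim-\gamma^3/4$ as $\gamma\to0$, not $c_4\gamma^4$. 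This is not mere bookkeeping: the sign pattern of $\Phi$ near $0$ and at infinity is the entire content of the conjecture in this range, so a candidate $\Phi$ with the wrong leading behaviour at $0$ cannot be the right object. Until the limit is actually constructed (e.g.\ via the branching-process approximation of $\hpil{C}_s$ and $\vpil{C}_s$ that you allude to), its zeros counted, and the gluing estimates on $[C/n,p_0]$ proved, the conjecture remains open.
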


\begin{conjecture} Asymptotically, $p_1(n)\sim c_1/n$ and $p_2(n)\sim c_2/n$
  for some constants $c_1$ and $c_2$.
\end{conjecture}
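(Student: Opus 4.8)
\smallskip
\noindent\emph{A strategy for the last two conjectures.}
Since $p_1(n)$ and $p_2(n)$ are predicted to be of order $1/n$, the natural move is to study the covariance in the window $p=c/n$, letting $c$ range over a compact subinterval of $(0,2)$; here $c=2$ is (roughly) the threshold beyond which the out-cluster $\hpil{C}_s$ and the in-cluster $\vpil{C}_s$ of $\ggnp$ cease to be $o(n)$, so one expects $\lim_n n p_1(n)$ and $\lim_n n p_2(n)$ to lie in $(0,2)$. For $p=c/n$ the graph $G(n,c/n)$ looks locally like a Galton--Watson tree with $\mathrm{Poisson}(c)$ offspring; after orienting, the $\mathrm{Poisson}(c)$ tree-neighbours of $s$ split, by thinning, into $\mathrm{Poisson}(c/2)$ ``outward'' ones (whose subtrees build up $\hpil{C}_s$) and $\mathrm{Poisson}(c/2)$ ``inward'' ones (whose subtrees build up $\vpil{C}_s$), and on a tree these two clusters meet only at $s$ and are asymptotically independent. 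As $\{a\to s\}=\{a\in\vpil{C}_s\}$ and $\{s\to b\}=\{b\in\hpil{C}_s\}$, this says the rescaled covariance tends to $0$, and locates $\Cov(s\notto a,\,b\notto s)$ as coming entirely from the \emph{defects} of the tree approximation --- short cycles through or near $s$, finiteness of the component, and the fact that $a$ and $b$ are prescribed rather than typical vertices.

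To extract numbers I would let $N_{as}$ and $N_{sb}$ be the numbers of directed paths $a\rightsquigarrow s$ and $s\rightsquigarrow b$ and expand, by the first/second moment method (Janson-type inequalities suffice since both means are $O(1/n)$),
\[
\P(a\notto s)=1-\frac{\alpha_1(c)}{n}-\frac{\alpha_2(c)}{n^2}+O(n^{-3}),
\qquad
\P(a\notto s,\ s\notto b)=1-\frac{2\alpha_1(c)}{n}+\frac{\beta(c)}{n^2}+O(n^{-3}),
\]
with $\alpha_1(c)=c/(2-c)$ from $\E N_{as}=\sum_{\ell\ge1}\frac{(n-2)!}{(n-1-\ell)!}(p/2)^\ell$, and $\alpha_2,\beta$ collecting the $O(1/n)$ corrections of the falling factorials together with the contributions of path pairs sharing a single internal vertex and the attendant inclusion--exclusion terms. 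The point of the tree picture is that the disjoint-witness contributions to $\P(a\notto s,\ s\notto b)$ factorize, so these expansions agree through order $n^{-2}$; hence
\[
\Cov(s\notto a,\ b\notto s)=\frac{h(c)}{n^{3}}+O(n^{-4}),
\]
where $h$ is an explicit rational function of $c$ with a power of $2-c$ in the denominator. The plan then reads: (i) compute $h$, show $h(c)<0$ for $c$ small, $h\not\equiv0$, and that $h$ has exactly two zeros $c_1<c_2$ in $(0,2)$, both simple, with $h>0$ on $(c_1,c_2)$ and $h<0$ just past $c_2$; (ii) deduce, by a Hurwitz/Rouch\'e-type perturbation argument using that $p\mapsto\Cov$ is a polynomial, that for large $n$ this polynomial has exactly two zeros in any fixed window $[\varepsilon/n,(2-\varepsilon)/n]$, at $p_i(n)=c_i/n+o(1/n)$; (iii) show $\Cov<0$ throughout the intermediate regime $np\to\infty$, $p\to0$, ruling out further sign changes there, which together with \refT{T:korrp} leaves exactly one more zero $p_3(n)\to1/2$ --- establishing Conjecture~\ref{C:3z} as well, with $c_1=\lim_n np_1(n)$ and $c_2=\lim_n np_2(n)$.

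The crux is step (i), with two independent difficulties. The first is enumerative: one must classify all ``minimal witness configurations'' for $\{a\to s,\ s\to b\}$ of bounded excess --- disjoint path pairs with their falling-factorial corrections, path pairs meeting in one extra vertex, cycles through $s$, short cycles through $a$ or $b$ --- compute their signed weights, assemble $h(c)$, and prove the omitted configurations contribute $O(n^{-4})$ \emph{uniformly} for $c\in[\varepsilon,2-\varepsilon]$. The second, even granting a formula for $h$, is to rule out that the $n^{-3}$ coefficient \emph{also} vanishes identically (the delicate ``$2p-1$'' cancellation at fixed $p$ shows this is a genuine risk; if it happens the whole scheme must be pushed one order further) and then to verify that this particular rational function changes sign exactly twice on $(0,2)$ --- routine once the function is in hand, but precisely the point at which the present paper stops and conjectures. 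Steps (ii) and (iii) are comparatively standard: (ii) is a perturbation argument for the zeros of a polynomial, and (iii) is a first-moment bound in the sparse-but-diverging-degree regime.
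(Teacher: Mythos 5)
This statement is a conjecture: the paper offers no proof of it, only numerical evidence from the recursions of Section~\ref{S:Rec} for $n\le 300$, so there is no argument of the authors' to compare yours against. What you have written is, by your own account, a research programme rather than a proof, and as such it cannot be accepted as establishing the statement; the admitted open points (the computation of $h$, the verification that the $n^{-2}$ terms in the covariance genuinely cancel, and the sign analysis of $h$) are exactly the substance of the conjecture.

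Beyond its incompleteness, there is one concrete structural flaw. You confine the analysis to the window $p=c/n$ with $c$ in a compact subinterval of $(0,2)$, arguing that $c=2$ is the threshold above which $\hpil{C}_s$ and $\vpil{C}_s$ become giant, and you assert that both $\lim_n np_1(n)$ and $\lim_n np_2(n)$ should lie in $(0,2)$. The paper's computations, however, estimate $c_1\approx 0.36$ but $c_2\approx 7.5$, well above the threshold $c=2$. At $c\approx 7.5$ the forward and backward clusters are supercritical, $\P(a\notto s)$ tends to a constant $1-\theta(c)^2$ strictly between $0$ and $1$ (with $\theta$ the survival probability of a $\mathrm{Poisson}(c/2)$ branching process), and your ansatz $\P(a\notto s)=1-\alpha_1(c)/n+\cdots$ with $\alpha_1(c)=c/(2-c)$ is simply false --- indeed $\alpha_1$ blows up at $c=2$ because the expected path count $\E N_{as}$ ceases to control the connection probability there. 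So even as a programme, the scheme as written can at best locate $p_1(n)$; locating $p_2(n)$ requires a genuinely different (supercritical, giant-component) analysis of the covariance, plus an argument covering the crossover regime around $c=2$, neither of which is sketched. Step (iii) would likewise have to rule out sign changes for $np\to\infty$ starting only from $np$ large rather than from $np>2-\varepsilon$. The first-order heuristics that are correct here (the local tree picture, the order of magnitude of $\E N_{as}$, the identification of the covariance with defects of the tree approximation) are reasonable and consistent with the paper's discussion, but the proposal does not prove the conjecture and, in its present form, could not be completed to do so without replacing the central expansion in the range that contains $c_2$.
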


The computations indicate that very rough estimates of $c_1$ and $c_2$ are
$0.36$ and $7.5$, respectively. 

It follows from \refR{R:expfast} that there is a critical probability $p(n)$ 
such that $p(n)\to1/2$ exponentially fast; if our conjectures hold, this
critical probability is thus $p_3(n)$.

\begin{conjecture} \label{C:p3}For all $n\ge 27$, $p_3(n)<1/2$. 
\end{conjecture}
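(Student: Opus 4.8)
The plan is to prove the (formally stronger, and cleaner) statement that $\Cov(s\notto a,\,b\notto s)>0$ for every $p\in[1/2,1]$ once $n\ge 27$; granting \refC{C:3z}, this forces every sign change of the covariance, in particular $p_3(n)$, to lie strictly below $1/2$. By \refR{R:expfast} the quotient $\Cov/(1-p/2)^{2n-3}$ tends to $2p-1$ exponentially fast and uniformly for $p$ bounded away from $0$, so positivity on $[1/2+\eta_0,1]$ is already at hand for any fixed $\eta_0>0$ and $n$ large; the real issue is the window $p\in[1/2-\eta_0,1/2+\eta_0]$, where $2p-1$ degenerates and the sign of $\Cov$ is dictated by the first surviving correction.

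The key step is to sharpen Lemmas~\refand{L:saGnp}{L:sabGnp} by one order, uniformly for $p\in[1/2-\eta_0,1/2+\eta_0]$ with $\eta_0$ small. From $\P(s\notto a)=2(1-p/2)^{n-1}+O\bigpar{n(1-p/2)^{2n-4}}$ one gets $\P(s\notto a)^2=4(1-p/2)^{2n-2}+O\bigpar{n(1-p/2)^{3n-5}}$, an error that is $o\bigpar{(1-p)^{n-1}}$ as long as $(1-p/2)^3<1-p$, i.e.\ $p<3-\sqrt5\approx0.76$. For $\P(s\notto a,\,b\notto s)$ one must inspect what was discarded as $\opnn$ in \refL{L:sabGnp}: the single largest discarded contribution, when $p$ is near $1/2$, is the event $\set{s\text{ is isolated}}=\cI_s\cap\cO_s$, of probability exactly $(1-p)^{n-1}$, which lies in $\set{s\notto a,\ b\notto s}$ but in \emph{none} of $\cI_{s,b}\cap\cI_b$, $\cO_{s,a}\cap\cO_a$, $\cO_a\cap\cI_b$. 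Every other discarded term — the events $\cE_k$ with $3\le k\le n-3$, the combinations $\cX_{S_1}\cap\cY_{S_2}$ with $|S_1\cup S_2|=3$, and the events $\cI_{s,x}\cap\cO_{s,z}$ with $(x,z)\ne(s,s)$ (for which the crude bound $(1-p)^{n-3}$ used in \refL{L:sabGnp} is \emph{not} enough and must be refined, e.g.\ to $\mathrm{poly}(n)\cdot(1-p)^{n-2}(1-p/2)^{n-1}$ when exactly one of $x,z$ is $s$, and to $(1-p)^{2n-4}$ otherwise) — is $o\bigpar{(1-p)^{n-1}}$ on the window. Since $3(1-p/2)^{2n-3}-4(1-p/2)^{2n-2}=(2p-1)(1-p/2)^{2n-3}$, this yields
\[
\Cov(s\notto a,\,b\notto s)=(2p-1)(1-p/2)^{2n-3}+(1-p)^{n-1}+o\bigpar{(1-p)^{n-1}}
\]
uniformly for $p\in[1/2-\eta_0,1/2+\eta_0]$.

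For $n$ large the conclusion is then immediate: on $[1/2,1/2+\eta_0]$ both $(2p-1)(1-p/2)^{2n-3}\ge0$ and $(1-p)^{n-1}>0$, so $\Cov>0$ as soon as the $o(\cdot)$ term is genuinely smaller, i.e.\ for $n\ge n_1$; together with the uniform exponential bound on $[1/2+\eta_0,1]$ (valid for $n\ge n_2$) this gives $\Cov>0$ on all of $[1/2,1]$ for $n\ge n_0\=\max(n_1,n_2)$, hence $p_3(n)<1/2$. The same expansion moreover pins down the zero of $\Cov$ nearest $1/2$, at $2p-1=-(1-p)^{n-1}(1-p/2)^{-(2n-3)}(1+o(1))$, i.e.\ at $p=1/2-\Theta\bigpar{(8/9)^{n}}$, recovering and locating the critical probability of \refR{R:expfast}. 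For the finitely many $n$ with $27\le n<n_0$ one computes $\gpn-\fpn^2$ exactly as a polynomial in $p$ from Lemmas~\refand{L:f}{L:g} and verifies $\gpn-\fpn^2>0$ on $[1/2,1]$ by an exact real-root count (Sturm sequences); this is of the same nature as the computations already carried out for $n\le30$ and is well within reach of the recursions up to $n\le300$.

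The main obstacle is the uniformity demanded in the second step. The error estimates as they stand in Lemmas~\refand{L:saGnp}{L:sabGnp} — $O\bigpar{n(1-p/2)^{2n}}$, $\opnn$, and the like — are hopelessly too weak here: at $p=1/2$ the quantity whose sign decides the conjecture has size $(1-p)^{n-1}=(1/2)^{n-1}$, which is \emph{exponentially smaller} than $(1-p/2)^{2n}=(9/16)^{n}$. One therefore has to re-run both proofs with explicit, rather than merely asymptotic, control of every discarded term and check that each is $o\bigpar{(1-p)^{n-1}}$ on a window small enough that $(1-p/2)^{3}<1-p$ (so $\eta_0<\tfrac52-\sqrt5$, and additionally, say, $2(1-p/2)^5<1-p$ to handle the $\cE_k$-tail). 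A secondary but, for a complete proof, unavoidable difficulty is producing an explicit and modest value of $n_0$ so that the finite verification really is a finite computation; the exponential rates in \refR{R:expfast} and in the refined expansion strongly suggest this is possible, but it requires tracking the constants in the expansion with care.
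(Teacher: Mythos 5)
The first thing to say is that the paper does not prove this statement: it is offered as a conjecture, supported only by the exact computations of Section~\ref{S:Comp}, and the only remark the authors make about it is that it would follow from Conjecture~\ref{C:above}. So there is no proof of record to compare yours against; your text has to be judged as a proposed proof of an open statement.

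Your central idea is genuinely good and does not appear in the paper. Near $p=1/2$ the main term $(2p-1)(1-p/2)^{2n-3}$ degenerates, and you correctly identify the dominant correction to $\P(s\notto a,\,b\notto s)$ beyond the three events $\cI_{s,b}\cap\cI_b$, $\cO_{s,a}\cap\cO_a$, $\cO_a\cap\cI_b$: it is the event $\cI_s\cap\cO_s$ that $s$ is isolated, of probability exactly $(1-p)^{n-1}$, which the paper's proof of \refL{L:sabGnp} simply discards. Since $(1-p/2)^3<1-p$ on a window around $1/2$, the other discarded contributions (the $\cE_k$-tail, the $|S_1\cup S_2|\ge3$ intersections, the remaining $\cI_{s,x}\cap\cO_{s,z}$) are indeed of strictly smaller exponential order there, and the resulting expansion $\Cov=(2p-1)(1-p/2)^{2n-3}+(1+o(1))(1-p)^{n-1}$ would give positivity on $[1/2,1]$ for large $n$ and place the last zero at $1/2-\Theta((8/9)^n)$, consistent with \refR{R:expfast} and Conjecture~\ref{C:enhalv}. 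I believe this can be made rigorous and is the right way to attack the problem.

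Nevertheless, as written this is a programme rather than a proof, and the gap you flag yourself is fatal for the statement actually being claimed. The conjecture is a claim about \emph{every} $n\ge27$; your argument yields it only for $n\ge n_0$, where $n_0$ is produced by several nested $o(\cdot)$'s that are never made explicit, and the finite verification for $27\le n<n_0$ is invoked but not carried out — indeed it cannot even be set up until $n_0$ is pinned down. Two further points. First, your stated bound $(1-p)^{2n-4}$ for $\P(\cI_{s,x}\cap\cO_{s,z})$ with $x,z\ne s$ is wrong unless $x=z$: for $x\ne z$ only the $n-3$ edges at $s$ are forced to be absent, and the correct bound is $(1-p)^{n-3}(1-p/2)^{2n-3}$; this is still $o\bigpar{(1-p)^{n-1}}$ on the window, so the argument survives, but the estimate as written would not. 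Second, the quantity $p_3(n)$ is only defined under Conjecture~\ref{C:3z}, so what you actually prove (positivity of the covariance on $[1/2,1]$) gives the stated conclusion only modulo that conjecture, or after re-reading $p_3(n)$ as the largest zero of the covariance.
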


\begin{conjecture} \label{C:above} For $n\ge 8$, the relative covariance $\frac{\P(a\notto s, s\notto b)-\P(a\notto s)\P(s\notto b)}{\P(a\notto s, s\notto b)}> \frac{2p-1}{3}$.
\end{conjecture}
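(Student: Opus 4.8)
\textbf{Proof proposal for Conjecture~\ref{C:above}.}
The plan is to turn the inequality into a statement comparing two explicit quantities built from the recursion data, and then to bound them using the structural estimates already developed in Sections~3--4. Since $\gpn>0$, the claimed inequality $(\gpn-\fpn^2)/\gpn>(2p-1)/3$ is equivalent to $3(\gpn-\fpn^2)>(2p-1)\gpn$, i.e. to
\[
(4-2p)\,\gpn > 3\,\fpn^2 .
\]
So the whole problem reduces to proving the \emph{finite-$n$} inequality $(4-2p)\,\P(a\notto s, s\notto b) > 3\,\P(s\notto b)^2$ for every $n\ge 8$ and every $p\in\hoi$ (and trivially checking $p=0$, where both sides vanish with $\fpn=1$, $\gpn=1$, giving $4>3$ --- so in fact one wants $p\in[0,1]$). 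The strict asymptotic statement of \refT{T:korrp} together with \refR{R:expfast} already tells us that the inequality holds for each fixed $p>0$ once $n$ is large; the content of the conjecture is that ``large enough'' can be taken to be $8$ uniformly in $p$.

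First I would seek a \emph{monotonicity-in-$n$} statement: show that the ratio $R_n(p)\=(4-2p)\gpn/(3\fpn^2)$ is nondecreasing in $n$ for each fixed $p$, or at least that $R_{n+1}(p)\ge R_n(p)$ for $n\ge 8$. If such a comparison holds, the conjecture follows from the single base case $n=8$, which is a polynomial inequality in $p\in[0,1]$ that can be verified directly (it is exactly what Figure~\ref{F:Plot} shows for the $n=8$ curve, and can be certified rigorously by \maple, e.g.\ via Sturm sequences on the polynomial $(4-2p)g_8(p)-3f_8(p)^2$). To get the monotonicity I would use the cluster recursions of Lemmas~\ref{L:f} and~\ref{L:g}: expressing both $\fpn$ and $\gpn$ as sums over out-cluster (resp.\ out/in-cluster pair) sizes weighted by the $d_p$ and $M_p$ probabilities, and relating the level-$n$ and level-$(n+1)$ sums by the ``add a vertex'' identities $d_p(n{+}1,k)=d_p(n,k)\,y^{k}$, $M_p(n{+}1,k,m,r{+}1)=M_p(n,k,m,r)\,q^{r(\cdot)}y^{(\cdot)}$. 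The hope is that the dominant terms (cluster of size $1$, $2$, $n{-}1$, $n{-}2$, as isolated in the proofs of Lemmas~\ref{L:saGnp} and~\ref{L:sabGnp}) already satisfy the desired inequality with room to spare, and the remaining terms are controlled by \refL{L:Gnm}/\refL{L0}-type exponential bounds.

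The main obstacle I anticipate is precisely that the inequality is \emph{tight} in two opposite regimes — it degenerates toward equality as $n\to\infty$ for every fixed $p$ (both sides behave like $3(1-p/2)^{2n-3}$ up to the relevant constants, and the gap is only the $o(1)$ correction of \refT{T:korrp}), and it must also survive the delicate small-$p$ region $p=\Theta(1/n)$ where, by the other conjectures, the relative covariance itself is changing sign. So a crude ``dominant term plus exponentially small error'' argument will not close the gap near $p=1/2$ for large $n$; one genuinely needs the error terms in Lemmas~\ref{L:saGnp} and~\ref{L:sabGnp} with the \emph{correct sign}, not merely the correct order. Concretely, the hard step is to show that the next-order term in $\gpn$ (the contribution of clusters of size $2$ and $n{-}2$, roughly of order $n(1-p/2)^{2n}$) pushes $(4-2p)\gpn$ \emph{above} $3\fpn^2$ rather than below, uniformly in $n$; equivalently, after dividing by $(1-p/2)^{2n-3}$, that the subleading coefficient of $(4-2p)\gpn-3\fpn^2$ is nonnegative for all $n\ge 8$. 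If the clean monotonicity-in-$n$ route fails, the fallback is a two-range argument: use the explicit \maple\ verification of the polynomial inequality for $8\le n\le N_0$ (some moderate $N_0$), and for $n>N_0$ use the quantitative error bounds of \refR{R:expfast}, refined to track signs, together with uniformity in $p\ge p_0$ and a separate direct treatment of the regime $p<p_0$ where $\fpn$ and $\gpn$ are close to their $p=0$ values $1$ and $1$.
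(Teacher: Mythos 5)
This statement is not a theorem in the paper: it is Conjecture~\ref{C:above}, stated on the basis of exact computations for $n\le 30$ and numerical evaluation of the recursions up to $n\approx 300$. The paper contains no proof of it, so there is nothing to compare your argument against except the surrounding evidence (Theorem~\ref{T:korrp}, Remark~\ref{R:expfast}, and Conjectures~\ref{C:3z}--\ref{C:enhalv}). Your algebraic reformulation is correct — since $\gpn>0$, the claim is equivalent to $(4-2p)\gpn>3\fpn^2$ for all $p\in[0,1]$ and $n\ge8$ — and your diagnosis of where the difficulty lies is accurate and matches the paper's own discussion: the inequality becomes an equality in the limit $n\to\infty$ for every fixed $p$ (Theorem~\ref{T:korrp} gives exactly the boundary value $(2p-1)/3$), and the regime $p=\Theta(1/n)$ is precisely where the relative covariance is changing sign according to Conjecture~\ref{C:3z}.

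However, what you have written is a plan of attack, not a proof, and the two steps on which everything hinges are left as hopes. The monotonicity of $R_n(p)=(4-2p)\gpn/(3\fpn^2)$ in $n$ is asserted as something you would ``seek''; nothing in Lemmas~\ref{L:f} and~\ref{L:g} delivers it, and the add-a-vertex identities relate $d_p(n+1,k)$ to $d_p(n,k)$ termwise but do not obviously propagate an inequality between the full sums $\gpn$ and $\fpn^2$ (the latter is not itself a sum over cluster configurations of a single graph, so the comparison is not term-by-term). Likewise, the fallback route requires the subleading term of $(4-2p)\gpn-3\fpn^2$, after division by $(1-p/2)^{2n-3}$, to be nonnegative with a quantitative lower bound; the paper's Lemmas~\ref{L:saGnp} and~\ref{L:sabGnp} only control this term as $O\bigl(n(1-p/2)^{2n}\bigr)$ and $o\bigl((1-p/2)^{2n}\bigr)$ with no sign information, and Remark~\ref{R:expfast} explicitly warns that the convergence is not uniform down to $p=0$, so the small-$p$ regime cannot be dispatched by those estimates. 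Until one of these two steps is actually carried out — with signed, uniform-in-$p$ error terms — the statement remains open, exactly as the paper presents it.
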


Note that Conjecture \ref{C:above} implies 
Conjecture \ref{C:p3} and the following related conjecture. 

\begin{conjecture} \label{C:enhalv} For $p=\frac 12$, and $n\ge 6$ the relative covariance $\frac{\P(a\notto s, s\notto b)-\P(a\notto s)\P(s\notto b)}{\P(a\notto s, s\notto b)}$ is positive.
\end{conjecture}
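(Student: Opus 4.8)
The plan is to reduce the conjecture to an exact finite computation together with a one–order–finer analysis of $f_n(p)$ and $g_n(p)$ at the single value $p=\tfrac12$. Since the denominator $\P(a\notto s,s\notto b)$ is positive and $\P(a\notto s)=\P(s\notto b)=f_n(p)$, the sign of the relative covariance is the sign of $\Cov(s\notto a,b\notto s)=g_n(\tfrac12)-f_n(\tfrac12)^2$, so it suffices to show $g_n(\tfrac12)>f_n(\tfrac12)^2$ for all $n\ge 6$.

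The point of departure is that at $p=\tfrac12$ the main term of the covariance, $(2p-1)(1-p/2)^{2n-3}$ from \eqref{korrp}, vanishes, so one must extract the next contribution. Write $y=1-p/2$, so $y=\tfrac34$ at $p=\tfrac12$. In the proof of \refL{L:sabGnp} the joint event is approximated by $A\cup B\cup C$ with $A=\cI_{s,b}\cap\cI_b$, $B=\cO_a\cap\cI_b$, $C=\cO_a\cap\cO_{s,a}$, each of which forbids exactly $2n-3$ directed edges, so $\P(A)=\P(B)=\P(C)=y^{2n-3}$; while $f_n(\tfrac12)=2y^{n-1}+O(n\,y^{2n})$ from \refL{L:saGnp} gives $f_n(\tfrac12)^2=4y^{2n-2}+O(n\,y^{3n})$. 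The exact identity $3y^{2n-3}=4y^{2n-2}$, valid precisely when $y=\tfrac34$ (equivalently $2p-1=0$), shows that these order‑$y^{2n}$ terms cancel \emph{completely}, and the dominant surviving contribution is the event $\cI_s\cap\cO_s$ that $s$ is isolated. This event lies inside $\{s\notto a\}\cap\{b\notto s\}$, has probability exactly $(1-p)^{n-1}=2^{-(n-1)}$, and its intersection with $A\cup B\cup C$ has probability exponentially smaller than $2^{-n}$; it was discarded in the proof of \refL{L:sabGnp} (as one of the $\cI_{s,x}\cap\cO_{s,z}$, the case $x=z=s$) precisely because $2^{-(n-1)}=o(y^{2n})$, but once the $y^{2n}$ terms cancel it becomes the leading term. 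All other discards in the proofs of Lemmas~\refand{L:saGnp}{L:sabGnp} — the events $\cX_{S_1}\cap\cY_{S_2}$ with $|S_1\cup S_2|\ge3$, the remaining pairs $\cI_{s,x}\cap\cO_{s,z}$, the clusters $\cE_k$ and $\cE'_k$ with $3\le k\le n-3$, and all inclusion–exclusion overlaps among $A,B,C$ — are $O(\mathrm{poly}(n)\,y^{3n})$, and since $2^{-n}/y^{3n}=(32/27)^n\to\infty$ these are negligible against $2^{-(n-1)}$. Carrying this through with explicit constants should give
\[
\Cov(s\notto a,b\notto s)\big|_{p=1/2}=2^{-(n-1)}+O\bigl(\mathrm{poly}(n)\,(3/4)^{3n}\bigr),
\]
which is positive for all $n\ge N_0$ for an explicit threshold $N_0$ (and incidentally shows the relative covariance at $p=\tfrac12$ is asymptotic to $2^{-(n-1)}/\bigl(3\,(3/4)^{2n-3}\bigr)\to0$, consistently with \refT{T:korrp}).

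For the remaining range $6\le n<N_0$ I would simply run the exact recursions of \refL{L:f} and \refL{L:g} at $p=\tfrac12$ (so $y=\tfrac34$, $q=\tfrac12$), which yield $f_n(\tfrac12)$ and $g_n(\tfrac12)$ as exact rationals, and verify $g_n(\tfrac12)>f_n(\tfrac12)^2$ directly — these are exactly the computations already reported in \refS{S:Comp}.

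The hard part is the bookkeeping in the second paragraph: one must redo the truncation arguments of Lemmas~\refand{L:saGnp}{L:sabGnp} while tracking every constant down to the scale $\mathrm{poly}(n)\,(3/4)^{3n}$, and — most delicately — be sure that nothing was overlooked at any scale strictly between $(3/4)^{2n}$ and $2^{-n}$. In particular one has to confirm that the order‑$(3/4)^{2n}$ cancellation is exact (not merely asymptotic) and that $\cI_s\cap\cO_s$ is the \emph{unique} event of probability of order $2^{-n}$ contained in the joint event, so that competitors such as ``$a$ isolated together with $\cO_s$'' (of order $(3/8)^n$) or ``$\{s,x\}$ isolated'' (of order $2^{-2n}$) are safely of lower order. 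If one wished to dispense with the finite check altogether, one would instead have to establish the stronger inequality of Conjecture~\ref{C:above}, which appears considerably harder.
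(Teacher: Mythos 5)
This statement is not proved in the paper: it appears only as Conjecture~\ref{C:enhalv}, supported by the exact computations of Section~\ref{S:Comp} for $n\le 30$ and by the observation that it would follow from the stronger (also unproven) Conjecture~\ref{C:above}. So there is no proof of the authors' to compare yours against, and your proposal is a genuinely new line of attack. Its asymptotic core checks out against the paper's own lemmas. With $y=1-p/2=3/4$ the identity $3y^{2n-3}=4y^{2n-2}$ is exact; $\P(\cI_{s,b}\cap \cI_b)=\P(\cO_{s,a}\cap \cO_a)=\P(\cO_a\cap\cI_b)=y^{2n-3}$ exactly; and since the error in $f_n=2y^{n-1}+O(ny^{2n-4})$ enters $f_n^2$ only multiplied by $4y^{n-1}$, one gets $f_n(1/2)^2=4y^{2n-2}+O(ny^{3n-5})$, so the order-$y^{2n}$ terms cancel exactly as you claim. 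Your identification of $\cI_s\cap\cO_s$, of probability exactly $2^{-(n-1)}$, as the unique surviving term at scale $2^{-n}$ also holds up: every other discarded event of type $\cI_{s,x}\cap\cO_{s,z}$ or $\cO_{a,y}\cap\cI_{b,w}$ carries, besides its $(1-p)^{n-O(1)}$ factor, an extra factor $y^{\Theta(n)}$ putting it at scale $(3/8)^n$ or below, and all remaining discards and inclusion--exclusion overlaps are $O(\mathrm{poly}(n)(27/64)^n)$; since $27/64<3/8<1/2<9/16$, everything competing with $2^{-(n-1)}$ is $o(2^{-n})$. This yields $\Cov(s\notto a,\,b\notto s)\big|_{p=1/2}=2^{-(n-1)}+o(2^{-n})>0$ for $n\ge N_0$, which is strictly more than \refT{T:korrp} gives at $p=1/2$ (there the theorem only says the covariance is $o((3/4)^{2n})$ and is silent on its sign).

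The gap is the one you yourself flag: as written this is a program, not a proof. No explicit $N_0$ is produced, and the case $n=5$ --- where the covariance at $p=1/2$ is in fact negative (the zero is at $p\approx 0.729$ by Table~\ref{T:zeroes}) --- shows that the $\mathrm{poly}(n)(27/64)^n$ error terms genuinely overwhelm $2^{-(n-1)}$ for small $n$, so the constants hidden in the $O(n^2)$ crude union bounds must actually be computed before one knows how far the finite verification has to reach. That verification for $6\le n<N_0$ is surely feasible (the authors evaluate the recursions at fixed $p$ up to $n=300$), but it is not yet part of your argument. Until both the explicit bookkeeping and the finite check are supplied, the conjecture remains open; completed, your argument would be a real addition to the paper.
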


\section{Exact recursions in $\ggnm$} \label{S:Recm}

For convenience, let $A:=\{a\notto s\}$ and $B:=\{s\notto b\}$.
In this section we will derive recursions for $\P(A)$ and $\P(A\cap B)$ in $\ggnm$ using the corresponding exact recursions in $\ggnp$ from Section \ref{S:Rec}.

Let as above
$f_n(p):=\P_{\ggnp}(A)=\P_{\ggnp}(B)$ and $g_n(p):=\P_{\ggnp}(A\cap B)$, 
and let 
$h_n(m):=\P_{\ggnm}(A)=\P_{\ggnm}(B)$ and $k_n(m):=\P_{\ggnm}(A\cap B)$.
Further, let $N:=\binom n2$ be the number of edges in the complete graph $K_n$ and $M\sim\Bin(N,p)$ be the actual number of edges in $G(n,p)$.
Using that $G(n,m)$ can be seen as $\bigpar{G(n,p)\mid M=m}$, we can express the functions $f_n(p)$ and $g_n(p)$ for $\ggnp$ using the corresponding functions, 
$h_n(m)$ and $k_n(m)$, for $\ggnm$.

\begin{align*}
f_n(p)&=\sum_{m=0}^{N}\P(M=m)\cdot h_n(m)=\sum_{m=0}^{N}\binom{N}m p^m (1-p)^{N-m}\cdot h_n(m),\\
g_n(p)&=\sum_{m=0}^{N}\P(M=m)\cdot k_n(m)=\sum_{m=0}^{N}\binom{N}m p^m (1-p)^{N-m}\cdot k_n(m).
\end{align*}
These relations can be inverted by repeated differentiating.
\begin{theorem}\label{T:rec}
\begin{align*}
h_n(m)&=\frac{(N-m)!}{N!}f_n^{(m)}(0)-\sum_{i=0}^{m-1}\binom mi (-1)^{m-i}\cdot h_n(i),\\
k_n(m)&=\frac{(N-m)!}{N!}g_n^{(m)}(0)-\sum_{i=0}^{m-1}\binom mi (-1)^{m-i}\cdot k_n(i).
\end{align*}
\end{theorem}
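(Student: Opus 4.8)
The plan is to read the two displayed formulas as the inversion of the single linear relation $f_n(p)=\sum_{j=0}^{N}\binom Nj p^j(1-p)^{N-j}h_n(j)$ (and, verbatim, $g_n(p)=\sum_{j=0}^{N}\binom Nj p^j(1-p)^{N-j}k_n(j)$), exploiting that the linear map sending the coefficient vector $\bigl(h_n(j)\bigr)_{j}$ to the Taylor data $\bigl(f_n^{(m)}(0)\bigr)_{m}$ is lower triangular with nonzero diagonal, hence invertible. Concretely, I would fix $m$ with $0\le m\le N$, apply $\frac{d^m}{dp^m}$ to the Bernstein-type expansion term by term, and evaluate at $p=0$.

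The one computation to carry out is the $m$-th derivative at $0$ of a single basis polynomial $\binom Nj p^j(1-p)^{N-j}$. If $j>m$, then after $m$ differentiations a positive power of $p$ survives, so the contribution at $p=0$ vanishes. If $j\le m$, the Leibniz rule leaves only the term in which exactly $j$ of the $m$ derivatives hit $p^j$ (fewer leaves a positive power of $p$, more annihilates $p^j$), giving $\binom mj\,j!\,(-1)^{m-j}\dfrac{(N-j)!}{(N-m)!}$ for the derivative of $p^j(1-p)^{N-j}$ at $0$; multiplying by $\binom Nj$ and cancelling factorials, the coefficient of $h_n(j)$ collapses to $\dfrac{N!}{(N-m)!}\binom mj(-1)^{m-j}$. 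Thus $\dfrac{(N-m)!}{N!}f_n^{(m)}(0)=\sum_{j=0}^{m}\binom mj(-1)^{m-j}h_n(j)$, and separating off the $j=m$ summand — whose coefficient is $1$ — yields exactly the stated recursion for $h_n(m)$; the identity for $k_n(m)$ follows from the same computation applied to $g_n$.

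There is no genuine obstacle here: the argument reduces to a one-line differentiation once the basis-polynomial derivative is in hand. The only points needing mild care are verifying that the degree-$>m$ Bernstein terms contribute nothing at $p=0$ and keeping the binomial and factorial bookkeeping straight through the Leibniz expansion. (Equivalently one may recognize the resulting relation as the standard finite-difference inversion of the binomial transform, $\dfrac{(N-m)!}{N!}f_n^{(m)}(0)=(\Delta^m h_n)(0)$ with $\Delta h(x)=h(x+1)-h(x)$, but the direct term-by-term differentiation is the most transparent route since $f_n$ and $g_n$ are already presented as polynomials in $p$.)
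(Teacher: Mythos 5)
Your proof is correct and is essentially the paper's own argument: both differentiate the Bernstein expansion $f_n(p)=\sum_j\binom Nj p^j(1-p)^{N-j}h_n(j)$ $m$ times, evaluate at $p=0$, and arrive at the identity $\frac{(N-m)!}{N!}f_n^{(m)}(0)=\sum_{j=0}^m\binom mj(-1)^{m-j}h_n(j)$ before isolating the $j=m$ term. The only cosmetic difference is that the paper first re-expands $(1-p)^{N-j}$ to collect the coefficient of $p^m$, whereas you compute the same derivative at $0$ directly via the Leibniz rule.
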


\begin{proof}
It is sufficient to show one of the recursions.
\begin{align*}
f_n(p)&=\sum_{i=0}^{N}\binom{N}i p^i (1-p)^{N-i}\cdot h_n(i)\\
&=\sum_{i=0}^{N}\binom{N}i \sum_{j=0}^{N-i}\binom{N-i}{j}p^{i+j}(-1)^j\cdot h_n(i)\\
&=\sum_{k=0}^{N}\frac{N!}{(N-k)!}p^k\sum_{i=0}^k\frac1{i!(k-i)!}(-1)^{k-i}\cdot h_n(i).
\end{align*}
Differentiating $m$ times and inserting $p=0$ gives
\begin{align*}
f_n^{(m)}(0)&=\frac{N!}{(N-m)!}m!\sum_{i=0}^m\frac1{i!(m-i)!}(-1)^{m-i}\cdot h_n(i)\\
\end{align*}
from which we get
\[\sum_{i=0}^m\binom mi(-1)^{m-i}\cdot h_n(i)=\frac{(N-m)!}{N!}f_n^{(m)}(0),\]
which, after rearranging, gives the desired recursion.
\end{proof}

Computer calculations based on these recursions lead us to the following conjecture.
\begin{conjecture}\label{C:1z}
For any fixed $n\ge 5$, the covariance in $\ggnm$ changes sign only once between two values of $m$.
\end{conjecture}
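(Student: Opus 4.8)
Write $C_n(m)\=k_n(m)-h_n(m)^2$ for the covariance of $\set{a\notto s}$ and $\set{s\notto b}$ in $\ggnm$, and put $N\=\nn$ and $p=p(m)\=m/N$. Three facts are immediate: $C_n(0)=0$ (no edges); $C_n(1)=-\P(a\to s)\P(s\to b)<0$ (a single random oriented edge makes the two events fail on disjoint sets); and $C_n(N)>0$ for $n\ge5$ by the complete-graph result of \cite{AL1}. Hence $(C_n(m))_{m=0}^{N}$ already changes sign at least once, and the whole content of the conjecture is that it changes sign \emph{at most} once, i.e.\ that $\set{m:C_n(m)>0}$ is an up-set in $\set{0,\dots,N}$; a convenient sufficient reformulation is that the relative covariance $C_n(m)/k_n(m)$ is nondecreasing in $m$.

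The plan is to split $\set{0,\dots,N}$ into three ranges. (1) \emph{Bulk and dense part}: $p_0N\le m\le N$ for a fixed $p_0\in\hoi$. Here \refT{T1} applies (it only requires $\liminf p>0$) and gives $C_n(m)=(1-p/2)^{2n-3}e^{-2p(1-p)/(2-p)^2}\bigl(f(p)+\rho_n(m)\bigr)$, where $f(p)\=3e^{-2p(1-p)/(2-p)^2}-4+2p$ is strictly increasing on $\oi$ with unique zero $\pc$ (proof of \refT{T:Gnm}) and $\rho_n(m)=o(1)$ uniformly; away from $\pc N$ the sign of $C_n(m)$ is thus that of $p-\pc$, so there is one crossing near $m\approx\pc N$, and the issue is to rule out extra crossings in a short window around $\pc N$. (2) \emph{Sparse part}: $1\le m<p_0N$, where one must prove $C_n(m)<0$. (3) \emph{Small $n$}: for $5\le n\le n_0$, with $n_0$ the threshold produced by the estimates in (1) and (2), the single sign change is verified from the exact recursions of \refS{S:Recm} by a finite computation, extending the numerics already reported for small $n$.

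I expect two genuine obstacles. The first, in range (1), is quantitative: a clean single crossing near $\pc N$ requires an error bound for the $\ggnm$ covariance strong enough to beat the $\Theta(1/n^2)$ granularity of $m/N$ (essentially the rate-of-convergence problem posed just after \refR{R:expfast}), together with a local monotonicity of $C_n(m)$ near $\pc N$. The second, and more serious, is range (2): $1\le m=o(n^2)$ is precisely the regime in which the $\ggnp$ relative covariance is conjectured to make its two extra sign changes (Conjecture~\ref{C:3z}), so Conjecture~\ref{C:1z} is exactly the statement that fixing the edge count destroys that behaviour. Concretely, estimates used in the proofs of \refL{L:saGnm} and \refL{L:sabGnm}, such as $n^2(1-p)^{n-3}=o\bigpar{(1-p/2)^{2(n-3)}}$, genuinely fail for $p=\Theta(1/n)$, and \refL{L:Gnm} no longer applies once $\ell$ is allowed to grow or $m=o(n^2)$; one would instead need sharp two-term asymptotics of $h_n(m)$ (hence of $h_n(m)^2$) against $k_n(m)$ throughout the sparse range, with the dominant contributions coming from configurations in which $s$, $a$ or $b$ has small degree or sits in a small component. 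A route that would sidestep both obstacles is a direct proof that $C_n(m)/k_n(m)$ is nondecreasing in $m$---for instance via a monotone coupling of $\ggnm$ with $\vec G(n,m+1)$, or via the recursions of \refL{L:g} combined with \refT{T:rec}---but I see no way to carry either out, and I regard obtaining such a monotonicity as the crux of the problem.
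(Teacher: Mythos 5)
The statement you are addressing is an open conjecture in the paper: the authors offer no proof, only the remark that ``computer calculations based on these recursions lead us to the following conjecture.'' So there is no argument of theirs to compare yours against, and the relevant question is only whether your proposal closes the gap. It does not, and you say so yourself. What you actually establish is the easy half: $C_n(0)=0$, $C_n(1)<0$ (correct, since with a single edge the events $\{a\to s\}$ and $\{s\to b\}$ are incompatible, and the covariance of the negated events equals that of the events), and $C_n(N)>0$ for $n\ge5$ by the $K_n$ result of \cite{AL1}, so at least one sign change occurs. The entire content of the conjecture is the ``at most once'' direction, and for that your three-range plan leaves both of its essential steps unproved: in the bulk, \refT{T1} gives only a qualitative $o(1)$ with no rate (the paper explicitly poses the rate of convergence in $G(n,m)$ as an open problem), so you cannot rule out spurious crossings in a window around $\pc\nn$ without a local monotonicity statement you do not have; and in the sparse range $m=o(n^2)$ none of Lemmas \ref{L:Gnm}, \ref{L:saGnm}, \ref{L:sabGnm} applies, and no substitute asymptotics are supplied. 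Your candidate shortcut --- monotonicity of $C_n(m)/k_n(m)$ in $m$, via a coupling of $\ggnm$ with $\vec G(n,m+1)$ or via the recursions --- is stated as a hope, not carried out.

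To be clear about what is of value here: your reformulation (that $\set{m:C_n(m)>0}$ should be an up-set), your identification of the sparse regime as the place where the $\ggnp$ and $\ggnm$ behaviours must diverge (this is exactly where Conjecture \ref{C:3z} predicts the extra sign changes for $\ggnp$), and your pinpointing of the missing rate of convergence are all accurate diagnoses of why the problem is hard. But a diagnosis of the obstacles is not a proof, and the conjecture remains open after your proposal exactly as it was before it.
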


 \section{Comparison between $\ggnp$ and $\ggnm$} \label{S:Variance}
As before, let $A:=\{a\notto s\}$ and $B:=\{s\notto b\}$.
For moderate $n$, the correlation between $A$ and $B$ is positive in $\ggnp$  
for quite small $p$, while, for $\ggnm$ the proportion of links, $m/N$, where $N=\binom n2$,
needs to be closer to 1 to get a positive correlation.
In fact, we may study the conditional covariance given $M\sim\Bin(N,p)$ to show that the covariance 
in $\ggnp$ exceeds the average covariance in 
$\ggnm$ for fixed $n$ and $p$.

\begin{fact} \label{F:Compare}
  \begin{equation}
	\label{f81}
\Cov_{\ggnp}(A,B)=\E(\Cov(A,B\mid M))+\Var(\P(A\mid M))
  \end{equation}
where
\[
\E(\Cov (A,B\mid M))=\sum_{m=0}^{N}\P(M=m)\cdot \Cov_{\ggnm}(A,B).\]
\end{fact}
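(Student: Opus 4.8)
The plan is to derive \eqref{f81} from the elementary law of total covariance applied to the conditioning variable $M\sim\Bin(N,p)$, and then to use the vertex symmetry of the model to turn the resulting cross‑covariance term into the variance $\Var(\P(A\mid M))$; the displayed formula for $\E(\Cov(A,B\mid M))$ is then just the definition of conditional expectation together with the identification of $\xpar{\gnp\mid M=m}$ with $\gnm$.

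First I would record the law of total covariance in the form
\[
\Cov(A,B)=\E\bigpar{\Cov(A,B\mid M)}+\Cov\bigpar{\P(A\mid M),\,\P(B\mid M)},
\]
valid for any events $A,B$ and any random variable $M$ on the same probability space. This follows by writing $\Cov(A,B)=\E(\mathbf 1_A\mathbf 1_B)-\E(\mathbf 1_A)\E(\mathbf 1_B)$, inserting $\E(\mathbf 1_A\mathbf 1_B)=\E\,\E(\mathbf 1_A\mathbf 1_B\mid M)$ and $\E(\mathbf 1_A\mathbf 1_B\mid M)=\Cov(A,B\mid M)+\P(A\mid M)\P(B\mid M)$, and then recognising $\E\bigpar{\P(A\mid M)\P(B\mid M)}-\E\P(A\mid M)\,\E\P(B\mid M)=\Cov\bigpar{\P(A\mid M),\P(B\mid M)}$ since $\E\P(A\mid M)=\P(A)$. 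Next I would use that conditioning $\gnp$ on $\{M=m\}$ produces exactly $\gnm$ (the independent edge orientations being unaffected by this conditioning), so that for every $m$ with $\P(M=m)>0$ one has $\Cov(A,B\mid M=m)=\Cov_{\ggnm}(A,B)$ and $\P(A\mid M=m)=\P_{\ggnm}(A)$, $\P(B\mid M=m)=\P_{\ggnm}(B)$. Taking expectation of the first identity over $M\sim\Bin(N,p)$ gives $\E(\Cov(A,B\mid M))=\sum_{m=0}^{N}\P(M=m)\,\Cov_{\ggnm}(A,B)$, which is the second display in the statement.

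Finally, relabelling the vertices (equivalently, the already noted identity $\P_{\ggnm}(A)=\P_{\ggnm}(B)$, i.e.\ $h_n(m)$ takes the same value for $A$ and $B$) shows that $\P(A\mid M)=\P(B\mid M)$ as functions of $M$; hence $\Cov\bigpar{\P(A\mid M),\P(B\mid M)}=\Var\bigpar{\P(A\mid M)}$, and substituting this into the law of total covariance yields \eqref{f81}. There is no real obstacle here: the computation is routine, and the only point that needs a word of justification is the equality $\P(A\mid M)=\P(B\mid M)$, which rests solely on the symmetry of $\ggnm$ under permutations of its vertex set.
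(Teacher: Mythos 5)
Your argument is correct and is essentially identical to the paper's: both apply the law of total covariance with respect to $M$, identify $\bigpar{\gnp\mid M=m}$ with $\gnm$ to interpret the conditional quantities, and use the symmetry $\P(A\mid M)=\P(B\mid M)$ to convert the cross-covariance term into $\Var(\P(A\mid M))$. No gaps.
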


To understand this statement (a standard type of variance analysis)
we will as in Section \ref{S:Recm} 
view $\gnm$ as $(\gnp\mid M=m)$.  Note that 
\begin{align*}
\Cov_{\ggnp}(A,B)&=\P_{\ggnp} (A\cap B)-\P_\ggnp (A)\P_\ggnp (B)\\
&=\E (\P(A\cap B\mid M))-\E (\P(A\mid M)\P(B\mid M)) \\
& \hskip 4em 
+ \E (\P(A\mid M)\P(B\mid M)) -\E (\P(A\mid M))\E (\P(B\mid M))\\ 
&=\E(\Cov (A,B\mid M))+\Cov(\P(A\mid M),\P(B\mid M))
\end{align*}
and, as $\P(A\mid M)=\P(B\mid M)$, the formula follows.

The left-hand side of \eqref{f81} is 
$\sim(2 p-1)(1- p/2)^{2n-3}$
by \refT{T:korrp}.
We can obtain the asymptotics of the two terms on the
right-hand side too.

\begin{theorem}  \label{T:varcond}
For every fixed $p\in(0,1]$,
  \begin{align*}
\E(\Cov(A,B\mid M))&=
\Bigpar{3-(4-2 p)e^{2\frac{p(1-p)}{(2-p)^2}}+o(1)}
\bigpar{1-\xfrac p2}^{2n-3} ,
\\
\Var(\P(A\mid M))
&= 4
\Bigpar{e^{2\frac{p(1-p)}{(2-p)^2}}-1+o(1)}
\bigpar{1-\xfrac p2}^{2n-2} 
.
  \end{align*}
\end{theorem}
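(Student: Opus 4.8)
\textbf{Proof proposal for Theorem~\ref{T:varcond}.}
The plan is to read off the asymptotics of both right-hand terms of \eqref{f81} from the already established Theorems~\ref{T:korrp} and~\ref{T1} (equivalently Lemmas~\ref{L:saGnm} and~\ref{L:sabGnm}), together with the identity in \refF{F:Compare}, rather than redoing any estimates from scratch. For the first term, I would start from
\[
\E(\Cov(A,B\mid M))=\sum_{m=0}^{N}\P(M=m)\,\Cov_{\ggnm}(A,B),
\]
and plug in the asymptotic formula for $\Cov_{\ggnm}(s\notto a,\,b\notto s)$ from \refT{T1}. The point is that, for fixed $p\in(0,1]$, the binomial variable $M\sim\Bin(N,p)$ with $N=\binom n2$ concentrates so sharply around $Np$ that the ratio $m/N$ is within $O(n^{-1})$ of $p$ with overwhelming probability; since the function $p\mapsto \frac{p(1-p)}{(2-p)^2}$ is smooth and $(1-p/2)^{2n-3}$ is what carries the exponential scale, a routine continuity/concentration argument shows
\[
\E(\Cov(A,B\mid M))\sim \Bigpar{3e^{-4\frac{p(1-p)}{(2-p)^2}}-4+2p}(1-p/2)^{2n-3}e^{4\frac{p(1-p)}{(2-p)^2}}\cdot\frac{1}{e^{?}},
\]
so the bookkeeping must be done carefully: the claimed answer $3-(4-2p)e^{2\frac{p(1-p)}{(2-p)^2}}$ shows the exponent that survives after dividing by the $\P(A)^2$-type normalization, so I would track the powers of $e^{\frac{p(1-p)}{(2-p)^2}}$ exactly, matching the second displayed line of \refT{T1}.

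For the second term, $\Var(\P(A\mid M))$, I would use $\P(A\mid M)=\P_{\ggnm}(s\notto a)=h_n(M)$, whose asymptotics are given by \refL{L:saGnm}: $h_n(m)\sim 2(1-p/2)^{n-1}e^{-\frac{p(1-p)}{(2-p)^2}}$ with $p=m/N$. The variance is then a variance of a smooth function of $M$, so I expect a delta-method computation: write $h_n(m)=2(1-\tfrac{m}{2N})^{n-1}e^{-\phi(m/N)}$ with $\phi(p)=\frac{p(1-p)}{(2-p)^2}$, expand around $m=Np$ to first order, and use $\Var(M)=Np(1-p)\sim \tfrac12 n^2 p(1-p)$. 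The dominant sensitivity comes from the factor $(1-\tfrac{m}{2N})^{n-1}$, whose logarithmic derivative in $m$ is $\sim -\tfrac{n-1}{2N-m}\sim -\tfrac{1}{n}\cdot\tfrac{2}{2-p}$, so $\Var(h_n(M))\sim h_n(Np)^2\cdot\bigpar{\tfrac{2}{n(2-p)}}^2\cdot \tfrac12 n^2 p(1-p)=h_n(Np)^2\cdot\tfrac{2p(1-p)}{(2-p)^2}$; plugging in $h_n(Np)^2\sim 4(1-p/2)^{2n-2}e^{-2\phi(p)}$ gives $\Var(\P(A\mid M))\sim 8\phi(p)(1-p/2)^{2n-2}e^{-2\phi(p)}$, which does not yet match the stated $4(e^{2\phi(p)}-1)(1-p/2)^{2n-2}$. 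The resolution is that the first-order delta method is not enough here: since \eqref{f81} forces $\Var(\P(A\mid M))=\Cov_{\ggnp}(A,B)-\E(\Cov(A,B\mid M))$, and the leading $(1-p/2)^{2n-3}$ terms must be handled jointly, the honest route is to compute $\E(h_n(M)^2)$ and $(\E h_n(M))^2$ separately, each to the needed precision, using the exact expression $q(\ell;n,m)$-type sums — i.e.\ I would express $\P(A)$ and $\P(A\cap B)$-related quantities via the $q(\cdot)$ and $q'(\cdot)$ probabilities and exploit $q(\ell;n,m)\le q'(\ell;n,p)$ from \refL{L:Gnm} for the error control.

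So the cleanest organization is: (1) recall \eqref{f81} and the two identities in \refF{F:Compare}; (2) obtain $\E(\Cov(A,B\mid M))$ by averaging the \refT{T1} asymptotic against the concentrated $\Bin(N,p)$ law, justifying the interchange of limit and expectation by the uniform (in $p\ge p_0$) error bounds from \refR{R:expfast} and \refL{L:Gnm}; (3) obtain $\Var(\P(A\mid M))$ either by subtraction using the already-known $\Cov_{\ggnp}(A,B)\sim(2p-1)(1-p/2)^{2n-3}$ from \refT{T:korrp} and the value from step~(2), which makes the two answers automatically consistent, or directly via a careful second-order expansion of $h_n(M)$. I expect the main obstacle to be step~(2): one must show that replacing $m/N$ by its mean $p$ inside the smooth exponential factor $e^{-4\phi(m/N)}$ and inside $(1-m/(2N))^{2n-3}$ introduces only a relative error $o(1)$, uniformly enough to survive multiplication by the exponentially small main term and summation over the $O(n)$-width bulk of the binomial; here the inequality $q(\ell;n,m)\le(1-p/2)^\ell$ of \refL{L:Gnm} is exactly the tool that keeps the tails of $M$ from contributing, while on the bulk a Taylor estimate of $\phi$ plus $\log(1-m/(2N))=\log(1-p/2)+O((m-Np)/n^2)$ does the rest.
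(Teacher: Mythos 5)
There is a genuine gap, and it sits exactly at the step you yourself flag as the ``main obstacle.'' Your plan rests on the claim that, because $M\sim\Bin(N,p)$ concentrates, one may replace $\hp=M/N$ by $p$ inside $(1-\hp/2)^{2n-3}$ at the cost of a relative error $o(1)$, the bulk estimate being $\log(1-\hp/2)=\log(1-p/2)+O((M-Np)/n^2)$. But $M-Np$ has standard deviation of order $n$, so this correction is $\Theta(1/n)$ and, multiplied by the exponent $2n-3$, contributes $\Theta(1)$ to the logarithm: the ratio $(1-\hp/2)^{an+b}/(1-p/2)^{an+b}$ does \emph{not} tend to $1$ but converges in distribution to the nondegenerate lognormal $\exp\bigl(a\sqrt{2p(1-p)}\,Z/(2-p)\bigr)$. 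This is the entire point of the theorem: the factors $e^{2\pp}$ and $e^{4\pp}$ in both stated formulas are precisely the lognormal moment corrections $\E e^{acZ}=e^{a^2c^2/2}$ with $c^2=2\pp$. Your ``routine continuity/concentration argument,'' pushed through, would yield $\E(\Cov(A,B\mid M))\approx\bigl(3e^{-4\pp}-(4-2p)e^{-2\pp}\bigr)(1-p/2)^{2n-3}$, which is off from the stated answer by the factor $e^{4\pp}$; the mysterious ``$e^{?}$'' in your display is not a normalization issue in \refT{T1} but this missing Gaussian moment. The same phenomenon is why your first-order delta method returns $8\pp$ where the answer is $4(e^{2\pp}-1)$: the fluctuation of $\log\P(A\mid M)$ has variance $2\pp=\Theta(1)$, so a lognormal variance formula $(\E X)^2(e^{\sigma^2}-1)$ is needed, not its small-$\sigma$ linearization. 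You correctly sense the mismatch but misdiagnose its source and retreat to an undeveloped fallback (``exact $q$-sums'').

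The paper's route, for comparison, is the reverse of yours and supplies exactly the two missing ingredients. It proves the \emph{variance} formula directly: the CLT gives $n(\hp-p)\dto\sqrt{2p(1-p)}\,Z$, hence $\P(A\mid M)/(1-p/2)^{n-1}\dto 2e^{-\pp}\exp\bigl(\sqrt{2p(1-p)}\,Z/(2-p)\bigr)$ via \refL{L:saGnm}, and the limit's second moment yields $4(e^{2\pp}-1)$; convergence of the variance (not just of the distribution) is then secured by a uniform fourth-moment bound, using $q(n-1;n,m)\le(1-\hp/2)^{n-1}$ from \refL{L:Gnm}, a Chernoff bound for the lower tail of $M$, and Hoeffding's inequality for $\E(1-\hp/2)^{4(n-1)}$. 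The first formula is then obtained by subtraction from \eqref{korrp} and \eqref{f81}, which is legitimate once one of the two is rigorously established. To repair your proposal you would need to (i) replace the concentration step by this distributional limit and compute the resulting Gaussian exponential moments, and (ii) add a uniform integrability argument to pass from convergence in distribution to convergence of the expectation or variance.
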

Note that all three terms in \eqref{f81}, as well as $\P(A)\P(B)=\P(A)^2$, 
are of the same order (except when a term vanishes), 
 viz.\  $(1-p/2)^{2n}$,
see \refT{T:korrp} and \refL{L:saGnp}.

\begin{proof}
The sum of the right-hand sides equals
$(2 p-1+o(1))(1- p/2)^{2n-3}$,
so by \eqref{korrp} and \eqref{f81}, it suffices to prove the second formula.

Let $\hp=M/N=M/\binom n2$, and note that by the Law of Large Numbers,
$\hp\pto p$.
We begin by observing that by the Central Limit Theorem, 
$$
n(\hp-p)
=\frac{n}{N\qq}\cdot\frac{M-Np}{N\qq}
\dto \sqrt{2p(1-p)}Z,
$$
where $\dto$ denotes convergence in distribution and $Z\sim N(0,1)$ is a
standard normal variable.
It follows that for any real constants $a$ and $b$,
\begin{equation}
  \label{jp}
\frac{(1-\hp/2)^{an+b}}{(1-p/2)^{an+b}}
=\lrpar{1-\frac{\hp-p}{2-p}}^{an+b}
\dto \exp\lrpar{a\frac{\sqrt{2p(1-p)}}{2-p}Z}.
\end{equation}

By \refL{L:saGnm},
  \begin{equation*}
	\begin{split}
\frac{\P(A\mid M)}{(1-\hp/2)^{n-1}}
&= 2 e^{-\hpp}  +\op(1)
= 2  e^{-\pp} +\op(1),
	\end{split}
  \end{equation*}
and thus by \eqref{jp}
  \begin{equation}\label{ma}
\frac{\P(A\mid M)}{(1-p/2)^{n-1}}
\dto {2  e^{-\pp}}
\exp\lrpar{\frac{\sqrt{2p(1-p)}}{2-p}Z}.
  \end{equation}
Denote the right-hand side of \eqref{ma} by $R$. 
Since $\E e^{cZ}=e^{c^2/2}$ for any real $c$, we have
$$
\E R^r=2^r e^{(r^2-r)\pp}, 
$$
in particular, $\E R^2=4 e^{2\pp}$ and $\E R=2$,
so $\Var R=4 e^{2\pp}-4$. Hence the result follows if the variance converges
in \eqref{ma}. For this, it suffices to show that
\begin{equation}\label{er3}
\E\lrpar{\frac{\P(A\mid M)}{(1-p/2)^{n-1}}}^4 =O(1).  
\end{equation}
(See e.g.{}
\cite[Theorems 5.4.2 and 5.4.9]{Gut} for this standard argument, and note that
the same argument shows that all moments converge in \eqref{ma}.)

To verify \eqref{er3}, let $p_0=p/2$. 
The proof of \refL{L:saGnm} shows that the estimates in \refL{L:saGnm} 
(with $p$ replaced by $\hp$) hold uniformly for $\hp\ge p_0$.
Since $q(n-1;n,m)\le q'(n-1;n,\hp)=(1-\hp/2)^{n-1}$ 
by \refL{L:Gnm}, this yields 
$\P(A\mid M)=O\bigpar{(1-\hp/2)^{n-1}}$, provided $\hp\ge p_0$.
For $\hp<p_0$, we simply use $\P(A\mid M)\le1$. Hence, for some constant
$C$,
\begin{equation}
\E\lrpar{\frac{\P(A\mid M)}{(1-p/2)^{n-1}}}^4 
\le (1-p/2)^{-4(n-1)}\P(\hp<p_0)
+C\E\lrpar{\frac{1-\hp/2}{1-p/2}}^{4(n-1)}. 
\end{equation}
The first term on the right-hand side is bounded since
$\P(\hp< p_0)=\P(M<Np_0)\le e^{-Np/8}$ by the Chernoff bound 
\cite[Theorem  2.1]{JLR}. For the second term we have,
letting $h=8/(n(2-p))$ and recalling that $M$ is a sum of
$N$ independent copies of $I\sim\Bin(1,p)$,
\begin{equation*}
  \begin{split}
\E\lrpar{\frac{1-\hp/2}{1-p/2}}^{4(n-1)}
&=\E\lrpar{1-\frac{\hp-p}{2-p}}^{4(n-1)}
\le\E\exp\lrpar{-4(n-1)\frac{\hp-p}{2-p}}
\\&
=\E\exp\bigpar{-h(M-Np)}	
=\lrpar{\E\exp\bigpar{-h(I-p)}}^N.	
  \end{split}
\end{equation*}
Now $\E e^{-h(I-p)}\le e^{h^2/8}$, see \cite[(4.16)]{Hoeffding63}
(a weaker estimate suffices), and thus the last expression is bounded by
$e^{Nh^2/8}=O(1)$. This shows \eqref{er3} and completes the proof.
\end{proof}

Numerical computations 
using the recursions of Sections \ref{S:Rec} and \ref{S:Recm} 
also suggest that 
all three quantities in  \eqref{f81}, normalized by e.g.\
$(1-p/2)^{2n-3}$, converge quickly unless $p$ is very small, 
cf.\ \refR{R:expfast}; 
see Figure \ref{F:Var} for the case $n=30$.

\begin{figure}[htbp]
\begin{center}
\epsfig{file=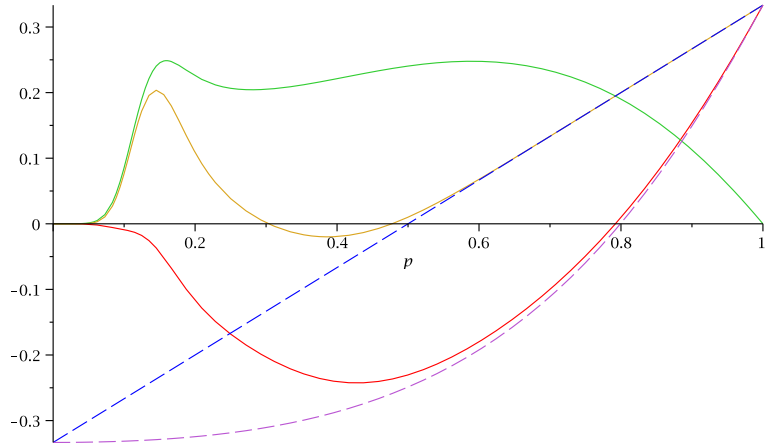, height=6cm}
\end{center}
\caption{The solid curves are the covariance for $\vec G(30,p)$, middle
curve, which by Fact \ref{F:Compare} is the sum of the expected covariance
for $\vec G(30,M)$, bottom, and $\Var(\P(A\mid M))$, top curve.
The dashed curves are the asymptotic curves for $\vec G(n,p)$ (straight
line) and expected covariance for 
$\vec G(n,M)$. All curves normalized with $3(1-p/2)^{2n-3}$.}
\label{F:Var}
\end{figure} 

 \section{Quenched version} \label{S:Quenched}
As mentioned in the introduction, we have so far studied the annealed model,
i.e.\ the joint probability space of $\ggnp$ (or $\ggnm$) and that of the
orientations. In the quenched model, the covariance is computed for each
graph of $\ggnp$ (or $\ggnm$) and then averaged over all graphs. 

It is quite common that the results differ between the two models, and this
seems to be the case here also. We have computed the quenched expectations
for $\ggnp$ (and $\ggnm$) for small $n$ ($n\le8$) and the covariances, as
functions of $p$, look quite different, as can be seen from Figure
\ref{F:Plot-qa} for $n=8$.

\begin{figure}[htbp]
\begin{center}
\epsfig{file=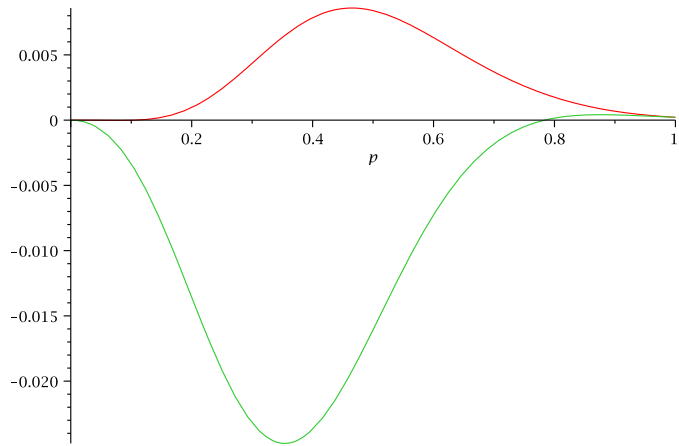, height=6cm}
\end{center}
\caption{Annealed (top) and quenched (bottom) covariances for $\vec G(8,p)$.}
\label{F:Plot-qa}
\end{figure}

For $n=4,\dots,8$ there is only one zero for the covariance, but this differs dramatically from the zero of the annealed model, as can be seen from Table \ref{T:zeroes}.

\begin{table}[h]
\begin{center}
\begin{tabular}{ccc}
$n$&Annealed&Quenched\\
\hline
4&1.000&1.000\\
5&0.729&0.927\\
6&0.276&0.857\\
7&0.152&0.809\\
8&0.107&0.783
\end{tabular}
\end{center}
\caption{Zeroes of the covariances for $\ggnp$ in the annealed and quenched models.}
\label{T:zeroes}
\end{table}

Conditioning on the graph and taking expectations, we get that, similarly to
Fact \ref{F:Compare},
\begin{equation}  \label{que}
\Cov_{\vec G_a(n,p)}(A,B)
 =\Cov_{\vec G_q(n,p)}(A,B)+\Cov(\P(A\mid G),\P(B\mid G)),
\end{equation}
where $\vec G_a(n,p)$ denotes the annealed model and $\vec G_q(n,p)$ denotes
the quenched model; recall that $\Cov_{\vec G_q(n,p)}(A,B)$ is defined as
$\E_{\vec G_q(n,p)}(\Cov(A,B\mid G))$.
(A similar formula holds for $\ggnm$ and the corresponding quenched model.)

Here the conditional probabilities for $A$ and $B$ given the graph need not
be equal, so that the last covariance in \eqref{que}
could possibly be negative for some values of $p$ and $n$.
Even though our computations show that this is not the case when $n\le 8$.

\begin{figure}[hbpt]
\begin{center}
\epsfig{file=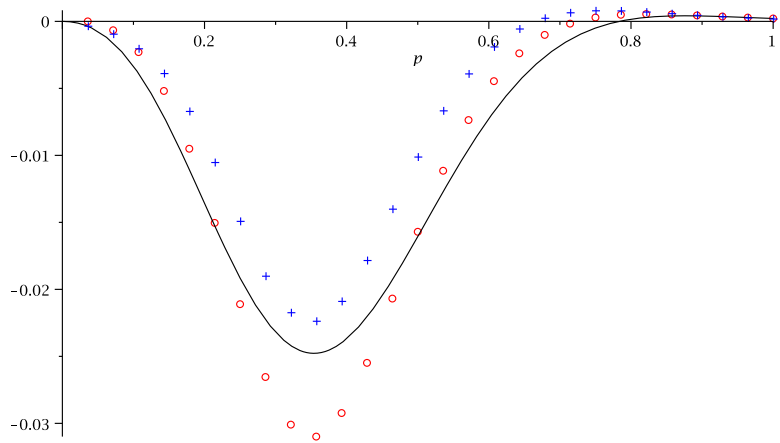, height=6cm}
\end{center}
\caption{Covariances of $A$ and $B$ for $\vec G(8,p)$ (solid) and $\vec G(8,m)$
  ($\circ$) in the quenched model and for $\vec G(8,m)$ ($+$) in the
  annealed model.} 
\label{F:Plot-q8}
\end{figure}

It is worth noting that, in contrast with the annealed model, in the
quenched model $\ggnp$ and $\ggnm$ behave similarly, see Figure
\ref{F:Plot-q8}; 
this is not surprising, since the average taken in the
quenched $\ggnp$ can be obtained by averaging over quenched $\ggnm$ with
suitable weights.
Also the quenched models appear, at least for small values of $n$, to be
much closer to annealed $\ggnm$ than annealed $\ggnp$.
In other words, the variation between different graphs with the same number
of edges is of less importance than the variation caused by different number
of edges. (The latter variation is quantified by   \refT{T:varcond}.)
This seems 
intuitively reasonable, since $\ggnm$ can be regarded as $\ggnp$ conditioned
on the number of edges, which thus can be seen as a ``semi-quenched'' version.
\begin{problem}
It would be interesting to find asymptotics for the quenched versions.
\end{problem}

\newcommand\webcite[1]{\hfil  
   \penalty0 
\texttt{\def~{{\tiny$\sim$}}#1}\hfill\hfill}
\newcommand\arXiv[1]{\webcite{arXiv:#1.}}

\end{document}